\documentclass[bibother]{asl}
\usepackage{rotate}


\usepackage[UKenglish]{babel}
\usepackage[utf8]{inputenc}
\usepackage{verbatim}

\usepackage{theoremref}
\usepackage{hyperref}

\usepackage{dsfont}
\usepackage{enumitem}

\usepackage{thmtools}

\usepackage{catoptions}
\makeatletter

\def\Autoref#1{%
	\begingroup
	\edef\reserved@a{\cpttrimspaces{#1}}%
	\ifcsndefTF{r@#1}{%
		\xaftercsname{\expandafter\testreftype\@fourthoffive}
		{r@\reserved@a}.\\{#1}%
	}{%
		\ref{#1}%
	}%
	\endgroup
}
\def\testreftype#1.#2\\#3{%
	\ifcsndefTF{#1autorefname}{%
		\def\reserved@a##1##2\@nil{%
			\uppercase{\def\ref@name{##1}}%
			\csn@edef{#1autorefname}{\ref@name##2}%
			\autoref{#3}%
		}%
		\reserved@a#1\@nil
	}{%
		\autoref{#3}%
	}%
}
\makeatother

\theoremstyle{plain}
\newtheorem{theorem}{Theorem}[section]
\newtheorem{corollary}[theorem]{Corollary}
\newtheorem{lemma}[theorem]{Lem\-ma}
\newtheorem{proposition}[theorem]{Prop\-o\-si\-tion}
\newtheorem{question}[theorem]{Question}
\newtheorem{conjecture}[theorem]{Conjecture}

\newtheorem{fact}[theorem]{Fact}

\theoremstyle{definition}
\newtheorem{definition}[theorem]{Definition}
\newtheorem{construction}[theorem]{Construction}
\newtheorem{remark}[theorem]{Remark}
\newtheorem{example}[theorem]{Example}

\newcommand{\N}[0]{\mathbb{N}}
\newcommand{\Z}[0]{\mathbb{Z}}
\newcommand{\Q}[0]{\mathbb{Q}}
\newcommand{\R}[0]{\mathbb{R}}
\renewcommand{\P}[0]{\mathbb{P}}
\renewcommand{\H}[0]{\mathbb{H}}
\newcommand{\OO}[0]{\mathcal{O}}
\newcommand{\MM}[0]{\mathcal{M}}
\newcommand{\NN}[0]{\mathcal{N}}
\newcommand{\supp}[0]{\mathrm{supp}}

\newcommand{\brackets}[1]{\left( #1 \right)}
\newcommand{\setbr}[1]{\left\{ #1 \right\}}
\newcommand{\pow}[1]{\!\left(\!\left( #1 \right)\!\right)}

\renewcommand{\L}[0]{\mathcal{L}}
\newcommand{\Lor}[0]{\mathcal{L}_{\mathrm{or}}}
\newcommand{\Log}[0]{\mathcal{L}_{\mathrm{og}}}
\newcommand{\Lr}[0]{\mathcal{L}_{\mathrm{r}}}
\newcommand{\Lvf}[0]{\mathcal{L}_{\mathrm{vf}}}

\newcommand{\Trcf}[0]{T_{\mathrm{rcf}}}
\newcommand{\Tdoag}[0]{T_{\mathrm{doag}}}
\newcommand{\rc}[1]{{#1}^{\mathrm{rc}}}
\renewcommand{\div}[1]{{#1}^{\mathrm{div}}}

\newcommand{\ol}[1]{\overline{#1}}
\newcommand{\ul}[1]{\underline{#1}}
\newcommand\restr[2]{{
		\left.\kern-\nulldelimiterspace 
		#1
		\vphantom{\big|} 
		\right|_{#2}
}}
\newcommand{\vmin}[0]{v_{\min}}
\newcommand{\vnat}[0]{v_{\mathrm{nat}}}

\DeclareMathOperator{\cl}{cl}
\DeclareMathOperator{\dcl}{dcl}
\DeclareMathOperator{\Th}{Th}

\DeclareMathOperator{\ff}{ff}
\DeclareMathOperator{\td}{td}

\usepackage{xcolor}

\title[Ordered fields dense in their real closure]{Ordered fields dense in their real closure and definable convex valuations}

\thanks{We started this research at the \emph{Model Theory, Combinatorics and Valued fields Trimester} at the Institut Henri Poincaré in March 2018. All three authors wish to thank the IHP for its hospitality, and Immanuel Halupczok, Franziska Jahnke, Vincenzo Mantova and Florian Severin for discussions. We also thank Assaf Hasson for helpful comments on a previous version of this work and Vincent Bagayoko for giving an answer to a question leading to \Autoref{prop:ordfieldnoarchmodel}. Moreover, we thank the anonymous referee for providing several helpful comments and references.}

\author[L.~S.~Krapp]{Lothar Sebastian Krapp}
\revauthor{Krapp, Lothar Sebastian}
\author[S.~Kuhlmann]{Salma Kuhlmann}
\revauthor{Kuhlmann, Salma}
\author[G.~Lehéricy]{Gabriel Lehéricy}
\revauthor{Lehéricy, Gabriel}

\address{Fachbereich Mathematik und Statistik\\Universität Konstanz\\78457 Konstanz, Germany}
\email{sebastian.krapp@uni-konstanz.de}
\urladdr{http://www.math.uni-konstanz.de/\urltilde krapp/}
\thanks{The first author was supported by a doctoral scholarship of Studienstiftung des deutschen
	Volkes as well as of Carl-Zeiss-Stiftung { and an Independent Research Grant of Zukunftskolleg, Universität Konstanz}.}

\address{Fachbereich Mathematik und Statistik\\Universität Konstanz\\78457 Konstanz, Germany}
\email{salma.kuhlmann@uni-konstanz.de}
\urladdr{https://www.mathematik.uni-konstanz.de/kuhlmann/}

\address{Fachbereich Mathematik und Statistik\\Universität Konstanz\\78457 Konstanz, Germany}
\curraddr{École supérieure d'ingénieurs Léonard-de-Vinci\\Pôle Universitaire Lé\-o\-nard de Vinci\\92 916 Paris La Défense Cedex, France}
\email{gabriel.lehericy@uni-konstanz.de}
\urladdr{http://www.math.uni-konstanz.de/\urltilde lehericy/}


\begin{document}

	\begin{abstract}
		In this paper, we undertake a systematic model and valuation theoretic study of the class of ordered fields which are dense in their real closure. We apply this study to determine definable henselian valuations on ordered fields, in the language of ordered rings.  { In light of our results, we re-examine the Shelah-Hasson Conjecture (specialised to ordered fields) and provide an example limiting its valuation theoretic conclusions.}
	\end{abstract}

\maketitle

	\section{Introduction}

	Let $\Lr = \{+,-,\cdot,0,1\}$ be the language of rings and $\Lor = \Lr \cup \{<\}$  the language of ordered rings.	
	There is a vast collection of results giving conditions on $\Lr$-definability\footnote{Throughout this paper, \emph{definable} means \emph{definable with parameters}.} of henselian valuations in fields, many of which are from recent years (cf.\ e.g.\ \cite{delon, hong, jahnke4, prestel, jahnke5}). {A survey on $\Lr$-definability of henselian valuations is given in \cite{fehm}.} In this paper, we undertake a systematic study of definable henselian valuations in ordered fields, which enables us to consider definability in the richer language $\Lor$. It is natural to expect from such a study to obtain strengthenings of existing $\Lr$-definability results. Ordered fields have been considered valuation theoretically mainly with respect to convex valuations. Note that due to \cite[Lemma~2.1]{knebusch}, henselian valuations in ordered fields are always convex. 
	A general method for the construction of non-trivial $\Lor$-definable convex valuations in ordered fields which are not dense in their real closure is given in \cite[Proposition~6.5]{jahnke}. By careful analysis of this method, we obtain sufficient conditions on the residue field and the value group of a henselian valuation $v$ in order that $v$ is $\Lor$-definable. These conditions motivate the study of the following two classes of structures: ordered abelian groups which are dense in their divisible hull, and ordered fields which are dense in their real closure. The former coincides with the elementary class of regular densely ordered abelian groups. We study the latter systematically, both from a model theoretic and an algebraic point of view. { Finally, we apply our results on definable valuations to strongly NIP\footnote{ The property ``strongly NIP'' is often also called ``strongly dependent''.} ordered fields and hence provide a counterexample to a strengthening of the Shelah--Hasson Conjecture (specialised to ordered fields). This is of particular interest in light of W.~Johnson's recent work \cite{johnson2}, in which the Shelah--Hasson Conjecture has been verified for dp-finite fields.}
		
	The structure of this paper is as follows.
	In \Autoref{sec:prelim}, we gather some basic preliminaries on ordered and valued abelian groups and fields. In \Autoref{sec:densdivhull}, we study the class of ordered abelian groups which are dense in their divisible hull.  Noting that it coincides with the class of regular densely ordered abelian groups (see \Autoref{prop:regimpliesdense}), we present a recursive elementary axiomatisation (see \Autoref{cor:regaxiom}). Moreover, in \Autoref{prop:subgroupdef} we obtain a characterisation of this class in terms of $\Log$-definability of convex subgroups (where $\Log = \{+,0,<\}$ is the language of ordered groups). Analogously, in \Autoref{sec:density} we study the class of ordered fields which are dense in their real closure, first model theoretically in \Autoref{sec:modeltheorydensity}. Our main result is \Autoref{prop:omindenserec}, {  which gives us a general method to produce recursive axiomatisations of ordered structures dense in certain definable closures. In particular, \Autoref{prop:omindenserec} leads to both an axiomatisation of ordered abelian groups which are dense in their divisible hull as well as an axiomatisation of ordered fields which are dense in their real closure (see \Autoref{cor:denserec})}. In \Autoref{sec:denseimm}, we briefly review the connections between dense and immediate extensions of ordered fields (see \Autoref{cor:denseimmfields}) and complements to the valuation ring (see \Autoref{fact:complements}). In \Autoref{sec:ip}, we relate normal integer parts of ordered fields to dense subfields regular over $\Q$ (see \Autoref{rmk:normalip}, \Autoref{cor:regularext}). Finally, we obtain a characterisation of density in real closure in terms of absolute transcendence bases (see \Autoref{cor:lemmaerdos}). In \Autoref{sec:defconvval}, we study $\Lor$-definable henselian valuations in ordered fields. In \Autoref{subsec:def}, we obtain our main result \Autoref{thm:defval} and compare it to known conditions for $\Lr$-definability of specific henselian valuations.  Special emphasis is put on definable valuations in almost real closed fields in \Autoref{sec:arc}. In \Autoref{sec:snip}, we relate our $\Lor$-definability results of henselian valuations in ordered fields to open questions regarding strongly NIP ordered fields.\footnote{A preliminary version of this work is contained in our arXiv preprint \cite{krapp}, which contains also a systematic study of the class of strongly NIP almost real closed fields. This study, being of independent interest, will be the subject of the separate publication \cite{krapp2}.}
	{ We exploit an equivalent reformulation of the Shelah--Hasson Conjecture specialised to ordered fields (see \autoref{conj:snipordered}) and examine further valuation theoretic strengthenings in the conclusion of this conjecture (see \autoref{qu:strengthening}). In this regard, we construct a strongly NIP almost real closed field which fails to be almost real closed with respect to an $\Lor$-definable valuation (see \autoref{thm:negativeanswer}).} We conclude by motivating and collecting open questions inspired by results throughout this paper in \Autoref{sec:questions}.
	
	\section{Preliminaries on ordered abelian groups and valued fields}\label{sec:prelim}
	
	We denote by $\N_0$ the set of natural numbers with $0$ and by $\N$ the set of natural numbers without $0$. All notions on valued fields and groups can be found in \cite{kuhlmann,engler}. 
	Throughout this work, we abbreviate the $\Lr$-structure of a field $(K,+,-,\cdot,0,1)$ by $K$, the $\Lor$-structure of an ordered field $(K,+,-,\cdot,0,1, <)$ by $(K,<)$ and the $\Log$-structure of an ordered group $(G,+,0, <)$ by $G$.
	
	Let $G \subseteq H$ be an extension of ordered abelian groups. We say that $G$ is \textbf{dense} in $H$ or that the extension $G \subseteq H$ is dense if for any $a,b \in H$ { with $a<b$} there is $c \in G$ such that $a<c<b$.
	If $\Z$ is a convex subgroup of $G$, then we say that $G$ is \textbf{discretely ordered}. Otherwise, $G$ is \textbf{densely ordered}, i.e.\ $G$ is dense in itself. 
	{ We call two elements $a,b \in G$ \textbf{archimedean equivalent} (in symbols $a \sim b$) if there is some $n\in \N$ such that { $|a|\leq n|b|$ and $|b| \leq n|a|$}. Let $\Gamma = \{[a] \mid a \in G\setminus\{0\}\}$, the set of archimedean equivalence classes of $G\setminus\{0\}$. Equipped with addition $[a]+[b] = [ab]$ and the ordering $[a] < [b] : \Leftrightarrow a \not\sim b \wedge |b| < |a|$, the set $\Gamma$ becomes an ordered set. Then $G\setminus\{0\} \to \Gamma, a \mapsto [a]$ defines a convex valuation on $G$. This is called the \textbf{natural valuation} on $G$, denoted by $v$.\footnote{See \cite[{ pages 9 and 15}]{kuhlmann} for further details regarding the definition of the natural valuation.} The value set $\Gamma$ of $G$ is denoted by $vG$.}
	We say that an extension of ordered abelian groups $G \subseteq H$ is \textbf{immediate} if it is immediate with respect to the natural valuation.\footnote{See \cite[page~3]{kuhlmann} for a definition of an immediate extension.}
	Let $\gamma \in vG$, and let $G^\gamma$ and $G_\gamma$ be the following convex subgroups of $G$: $$G^\gamma = \setbr{g \in G \mid v(g) \geq \gamma} \text{ and } G_\gamma = \setbr{g \in G \mid v(g) > \gamma}.$$
	The \textbf{archimedean component} $B(G,\gamma)$ of $G$ corresponding to $\gamma$ is given by $B(G,\gamma) = G^\gamma/G_\gamma$. If no confusion arises, we only write $B_\gamma$ instead of $B(G,\gamma)$. Note that $B_\gamma$ is an ordered abelian group with the order induced by $G$.
	A valuation $w$ on $G$ is \textbf{convex} if for any $g_1,g_2 \in G$ with $0<g_1\leq g_2$, we have $w(g_1) \geq  w(g_2)$. Note that there is a one-to-one correspondence between non-trivial convex subgroups of $G$ and final segments of $vG$ (cf.\ \cite[page~50~f.]{kuhlmann}).
	
	We also consider $G$ as a topological subspace of $H$ under the order topology. We say that $G$ has a \textbf{left-sided limit point} $g_0$ in $H$ if for any  $g_1 \in H$ with $g_1>0$ the intersection of $(g_0-g_1,g_0)$ with $G$ is non-empty. Similarly, $g_0$ is a \textbf{right-sided limit point} if for any $g_1 \in H^{>0}$ we have $(g_0,g_0+g_1) \cap G \neq \emptyset$. A \textbf{limit point} is a point which is a left-sided or a right-sided limit point.
	The \textbf{divisible hull} of $G$ is denoted by $\div{G}$. Note that $G$ and $\div{G}$ have the same value set under $v$, i.e.\ $vG = v\div{G}$. The \textbf{closure} of $G$ in $\div{G}$ with respect to the order topology is denoted by $\cl(G)$. Note that $G$ is dense in $\div{G}$ if and only if $\cl(G) = \div{G}$. Note further that $G$ has a limit point in $\div{G} \setminus G$ (i.e.\ there is some $a \in \div{G} \setminus G$ such that $a$ is a limit point of $G$ in $\div{G}$) if and only if $G$ is not closed in $\div{G}$.
	
	For any ordered abelian groups $G_1$ and $G_2$, we denote the \textbf{lexicographic sum} of $G_1$ and $G_2$ by $G_1 \oplus G_2$. This is the abelian group $G_1 \times G_2$ with the lexicographic ordering $(a,b) < (c,d)$ if $a<c$, or $a=c$ and $b<d$. Let $(\Gamma,<)$ be an ordered set and for each $\gamma \in \Gamma$, let $A_\gamma \neq \{0\}$ be an archimedean ordered abelian group. For any element $s$ in the product group $\prod_{\gamma \in \Gamma} A_\gamma$, define the \textbf{support of $s$} by $\supp(s) = \{\gamma \in \Gamma \mid s(\gamma) \neq 0\}$. The \textbf{Hahn product} $\H_{\gamma \in \Gamma}A_\gamma$ is the subgroup of $\prod_{\gamma \in \Gamma} A_\gamma$ consisting of all elements with well-ordered support. Moreover, $\H_{\gamma \in \Gamma}A_\gamma$ becomes an ordered group under the order relation $s > 0 :\Leftrightarrow s(\min\supp (s)) > 0$. We express elements $s$ of $\H_{\gamma \in \Gamma}A_\gamma$ by $s = \sum_{\gamma \in \Gamma} s_\gamma \mathds{1}_\gamma$, where $s_\gamma = s(\gamma)$ and $\mathds{1}_\gamma$ is the characteristic function of $\gamma$ mapping $\gamma$ to { some fixed $1_\gamma \in A_\gamma$ with $1_\gamma > 0$} and everything else to $0$. The \textbf{Hahn sum} $\coprod_{\gamma \in \Gamma} A_\gamma$ is the ordered subgroup of $\H_{\gamma \in \Gamma}A_\gamma$ consisting of all elements with finite support.\\
	
	Let $K$ be a field and let $v$ be a valuation on $K$. We denote the \textbf{valuation ring} of $v$ in $K$ by $\OO_v$, the \textbf{valuation ideal}, i.e.\  the maximal ideal of $\OO_v$, by $\MM_v$, the \textbf{ordered value group} by $vK$ and the \textbf{residue field} $\OO_v/\MM_v$ by $Kv$. For $a \in \OO_v$ we also denote $a + \MM_v$ by $\ol{a}$. For an ordered field $(K,<)$, a valuation is called \textbf{convex} (in $(K,<)$) if the valuation ring $\OO_v$ is a convex subset of $K$. In this case, the relation $\ol{a} < \ol{b} : \Leftrightarrow \ol{a} \neq \ol{b} \wedge a < b$ defines an order relation on $Kv$ making it an ordered field. 
	
	Let $\Lvf = \Lr\cup \{\OO_v\}$ be the \textbf{language of valued fields}, where $\OO_v$ stands for a unary predicate. Let $(K,\OO_v)$ be a valued field. An atomic formula of the form $v(t_1) \geq v(t_2)$, where $t_1$ and $t_2$ are $\Lr$-terms, stands for the $\Lvf$-formula $t_1=t_2=0 \vee (t_2\neq 0 \wedge \OO_v(t_1/t_2))$. Thus, by abuse of notation, we also denote the $\Lvf$-structure $(K,\OO_v)$ by $(K,v)$. Similarly, we also call $(K,<,v)$ an ordered valued field. 
	We say that a valuation $v$ is $\L$-definable for some language $\L \in \{\Lr,\Lor\}$ if its valuation ring is an $\L$-definable subset of $K$.
	
	Let $K$ be a field and let $v$ and $w$ be valuations on $K$. We write $v\leq w$ if and only if $\OO_v \supseteq \OO_w$. In this case we say that $w$ is \textbf{finer} than $v$ and $v$ is \textbf{coarser} than $v$. If $\OO_v \supsetneq \OO_w$, we write $v< w$ and say that $w$ is \textbf{strictly finer} than $v$ and that $v$ is \textbf{strictly coarser} than $w$. Note that $\leq$ defines an order relation on the set of convex valuations of an ordered field. 
	{ Since $(K,+,0,<)$ is an ordered abelian group, the notion of archimedean equivalence classes and the natural valuation specialise to the ordered field $K$. We denote the natural valuation on $K$ by $\vnat$.}
	If not further specified, we say that an extension of ordered fields $(K,<) \subseteq (L,<)$ is \textbf{immediate} if it is immediate\footnote{For the definition of an immediate extension of valued fields, see \cite[page~27]{kuhlmann}.} with respect to the natural valuation. The extension is \textbf{dense} if $K$ is dense in $L$.
	
	Let $(k,<)$ be an ordered field and $G$ an ordered abelian group. We denote the \textbf{ordered Hahn field} with coefficients in $k$ and exponents in $G$ by $k\pow{G}$. We denote an element $s \in k\pow{G}$ by $s = \sum_{g \in G} s_gt^g$, where $s_g = s(g)$ and $t^g$ is the characteristic function on $G$ mapping $g$ to $1$ and everything else to $0$. The ordering on $k\pow{G}$ is given by $s > 0 : \Leftrightarrow s(\min \supp s) > 0$, where $\supp s = \{g \in G \mid s(g) \neq 0\}$ is the \textbf{support} of $s$. Let $\vmin$ be the valuation on $k\pow{G}$ given by $\vmin(s) = \min \supp s$ for $s \neq 0$. Note that $\vmin$ is convex and henselian. Note further that if $k$ is archimedean, then $\vmin$ coincides with $\vnat$.
	
	We  repeatedly use the Ax--Kochen--Ershov principle for ordered fields (cf.\ \cite[Corollary~4.2(iii)]{farre}).
	
	\begin{fact}[Ax--Kochen--Ershov Principle]\thlabel{fact:ake}
		Let $(K, <,v)$ and $(L,<,w)$ be two ordered henselian valued fields. Then $(Kv,<) \equiv (Lw,<)$ and $vK \equiv wL$ if and only if $(K,<, v) \equiv (L,<, w)$.
	\end{fact}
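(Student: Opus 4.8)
The plan is to prove the two directions separately, the backward implication being essentially formal and the forward one carrying all the content via an Ax--Kochen--Ershov back-and-forth adapted to the ordered setting.

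For the direction from right to left, I would observe that both the ordered residue field $(Kv,<)$ and the value group $vK$ are interpretable in the ordered valued field $(K,<,v)$: the residue field is the quotient $\OO_v/\MM_v$ equipped with the induced order $\ol a < \ol b :\Leftrightarrow \ol a \neq \ol b \wedge a<b$, and the value group is $K^\times/\OO_v^\times$ with its induced order. Since elementary equivalence is preserved under interpretations, $(K,<,v)\equiv (L,<,w)$ immediately yields $(Kv,<)\equiv(Lw,<)$ and $vK\equiv wL$.

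The forward direction is the substantial part. First I would record that we are automatically in equicharacteristic $0$: residue fields of convex valuations on ordered fields are ordered, hence of characteristic $0$, and value groups are torsion-free. I would then reduce elementary equivalence, via the standard criterion, to constructing an isomorphism between suitably saturated elementary extensions $(K^*,<,v^*)$ and $(L^*,<,w^*)$ of the same cardinality. Such an isomorphism is produced by a back-and-forth construction of an increasing chain of partial isomorphisms $\phi_i\co A_i\to B_i$ between small valued ordered subfields, each of which is an isomorphism of ordered valued fields inducing partial elementary maps on the residue fields and on the value groups. The two hypotheses $(Kv,<)\equiv(Lw,<)$ and $vK\equiv wL$, together with saturation, guarantee that any such residue-field or value-group map extends to accommodate a prescribed new element; Hensel's lemma then allows one to lift these extensions back up to the valued field and to keep the fields henselian, exactly as in the classical equicharacteristic-$0$ Ax--Kochen--Ershov theorem (see \cite{engler}).

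The main obstacle is the bookkeeping of the \emph{order}, which is genuinely extra data and not determined by the valued-field structure alone. For instance, on $\R(t)$ with its natural valuation the two orders making $t$ a positive, respectively negative, infinitesimal have the same residue field $\R$ and the same value group $\Z$, yet are distinct orders on the fixed field. Consequently a valued-field isomorphism respecting the residue order need not respect the field order, and the sign of each newly adjoined element must be chosen and matched explicitly across the $\phi_i$. The key structural fact that makes this possible is the convexity of the valuation: it forces the positive cone of each $A_i$ to be assembled from residue-field signs along the chain of convex subgroups determined by $v A_i$, so that whenever one enlarges the domain---whether by an immediate or residue step or by creating a new value---there is always a compatible choice of order on the larger subfield and a matching element on the other side. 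Verifying uniformly that this order-compatible extension can be carried out at the value-group steps, where new infinitesimal directions appear, is the technical heart of the argument.
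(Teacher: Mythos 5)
The paper itself contains no proof of this statement: it is imported verbatim as a Fact from Farr\'e (cited as Corollary~4.2(iii) of \cite{farre}), so there is no in-paper argument to compare yours against; the comparison is against the cited literature, which proves it by a transfer theorem in exactly the AKE tradition you follow. On its own terms your outline is sound. The right-to-left direction is indeed immediate from interpretability of the ordered residue field and the ordered value group in $(K,<,v)$; note that well-definedness of the induced order on $Kv$ rests on convexity of $v$, which is automatic for henselian valuations on ordered fields by the Knebusch--Wright result cited in the introduction. For the substantial direction, the reduction to equicharacteristic $(0,0)$, the passage to saturated models, and the back-and-forth through residue, ramified and immediate extension steps is the standard equicharacteristic-zero AKE machinery, and your identification of the ordering as genuinely extra data (the Baer--Krull phenomenon, well illustrated by your $\R(t)$ example) with convexity as the controlling mechanism is exactly the right diagnosis.

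What keeps the proposal from being a proof is that the order bookkeeping---which you yourself flag as ``the technical heart''---is asserted rather than carried out. For completeness, the three verifications your plan needs are: at residue steps, the sign of a unit is determined by the sign of its residue (convexity); at ramified steps, the sign of the newly adjoined element is a free choice, matched on the other side by replacing the chosen $b$ with $-b$, which has the same value; at immediate steps, the sign of $a-c$ for $c$ in the base equals the sign of any base element $d$ with $v(a-c-d)>v(a-c)$, again by convexity, so the order on an immediate extension is determined by the valued-field data and the base order. None of these is problematic, so the outline stands, but as written it is a correct plan rather than a complete argument---which is a reasonable state of affairs for a statement the paper itself only quotes.
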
 

	\section{Regular densely ordered abelian groups}\label{sec:densdivhull}
	
	In this section, we study the class of ordered abelian groups which are dense in their divisible hull. In the light of ordered fields dense in their real closure and definable henselian valuations, this class is of interest for the following reasons:
	
	\begin{itemize}[wide, labelwidth=!, labelindent=6pt]
		\item The divisible hull $\div{G}$ of a non-trivial ordered abelian group $G$ is the smallest extension of $G$ which is a model of the theory of divisible ordered abelian groups $\Tdoag$. The real closure $(\rc{K},<)$ of an ordered field $(K,<)$ is the smallest extension of $(K,<)$ which is a model of the theory of real closed fields $\Trcf$. Thus, the divisible hull of an ordered abelian group can be seen as the group analogue to the real closure of an ordered field. Moreover, both $\Tdoag$ and $\Trcf$ share several model theoretic properties such as completeness, o-minimality and quantifier elimination. We are therefore interested in exploring algebraic and model theoretic similarities and differences between these two classes of ordered structures.
		
		\item The property of an ordered abelian group to be dense in its divisible hull can also be considered as a topological characteristic with respect to the order topology. To this end, for an ordered henselian valued field $(K,<,v)$, topological conditions on $vK$ as a subspace of $\div{vK}$ naturally arise in the context of the definability of $v$ (see \Autoref{thm:defval} and \Autoref{cor:hong}). 
	\end{itemize}

	Throughout this section, we prove properties of ordered abelian groups dense in their divisible hull and point out the analogues for ordered fields in \Autoref{sec:density}.

	Regular ordered abelian groups have been studied model theoretically in \cite{robinson} and algebraically in \cite{zakon}. An ordered abelian group is \textbf{regular} if it satisfies one of the equivalent conditions in \Autoref{fact:defregular} ({ cf.\  \cite[Theorem~2.1]{zakon} and \cite[Proposition~1 \& Proposition~4]{conrad}}).
	
	\begin{fact}\thlabel{fact:defregular}
		Let $G$ be an ordered abelian group. Then the following are equivalent:
		\begin{enumerate}[wide, labelwidth=!, labelindent=6pt]
			\item \thlabel{fact:defregular1} For any prime $p \in \N$ and for any infinite convex subset $A \subseteq G$, there is a $p$-divisible element in $A$.
			
			\item \thlabel{fact:defregular2} For any $n\in \N$ and any $a,b \in G$, if there are $g_1,\ldots,g_n \in G$ with $a \leq g_1 < \ldots < g_n \leq b$, then there  is some $c \in G$ with $a \leq nc \leq b$.
			
			\item For any non-trivial convex subgroup $H \subseteq G$, the quotient group $G/H$ is divisible.
		\end{enumerate}
	\end{fact}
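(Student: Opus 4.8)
The plan is to establish the equivalences through the cyclic chain \thref{fact:defregular2}$\;\Rightarrow\;$\thref{fact:defregular1}$\;\Rightarrow\;$(iii)$\;\Rightarrow\;$\thref{fact:defregular2}, where I refer to the three conditions as (1), (2), (3) in the order listed. Two of these are immediate. For (2)$\Rightarrow$(1), given a prime $p$ and an infinite convex set $A \subseteq G$, I would pick any $p$ distinct elements $g_1 < \dots < g_p$ of $A$ and apply (2) with $n=p$ to obtain $c \in G$ with $g_1 \leq pc \leq g_p$; since $A$ is convex, $pc \in A$ is the desired $p$-divisible element. For (1)$\Rightarrow$(3), let $H$ be a non-trivial convex subgroup and $g \in G$. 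As $H$ is non-trivial it is infinite, and since cosets of convex subgroups are again convex, $g+H$ is an infinite convex set; by (1) it contains some $p$-divisible $pc \in g+H$, whence $\ol{g} = p\,\ol{c}$ in $G/H$. This shows $G/H$ is $p$-divisible for every prime $p$, hence divisible.

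The substantial direction is (3)$\Rightarrow$(2). Fix $n \in \N$ and $a \leq g_1 < \dots < g_n \leq b$, let $d = \min_i(g_{i+1}-g_i) > 0$ be the smallest gap, and set $\gamma = v(d)$ for the natural valuation $v$. The idea is to pass to a quotient on which $n$-divisibility becomes free. Suppose first that $G_\gamma = \setbr{g \in G \mid v(g) > \gamma}$ is non-trivial. Since $d \leq g_n - g_1$ forces $v(g_n - g_1) \leq \gamma$, none of the gaps lies in $G_\gamma$, so the images $\ol{g_1} < \dots < \ol{g_n}$ in $G/G_\gamma$ stay distinct and in particular $\ol{a} < \ol{b}$. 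By (3), $G/G_\gamma$ is divisible, hence densely ordered, so $(\ol{a},\ol{b})$ is non-empty and meets $n(G/G_\gamma) = G/G_\gamma$. Choosing $c \in G$ with $\ol{a} < n\ol{c} < \ol{b}$ and using that any coset lying strictly between $\ol{a}$ and $\ol{b}$ consists entirely of elements of $[a,b]$ (by convexity of $G_\gamma$), I obtain $a \leq nc \leq b$.

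The remaining, and genuinely delicate, case is $G_\gamma = \setbr{0}$, which forces $\gamma = \max vG$; equivalently $G$ has a smallest non-trivial convex subgroup $B = G^\gamma = \setbr{g \in G \mid v(g) \geq \gamma}$, which is archimedean and contains $d$. Here there is no room below the scale of $d$ to quotient, so instead I would quotient by $B$ itself, which is divisible in the quotient by (3): $G/B$ is divisible. If the points meet at least two cosets of $B$, then $\ol{a} < \ol{b}$ in $G/B$ and the lifting argument above applies verbatim, now with the $B$-coordinate free. If all $g_i$ lie in a single coset of $B$, the problem localises to $B$, which by Hölder's theorem embeds into $\R$ and is therefore either cyclic or densely ordered: in both cases the interval spanned by the points meets every coset of $nB$ in $B$ (because it contains at least $n$ consecutive integers, respectively because $nB$ and its cosets are dense), while divisibility of $G/B$ supplies the matching coset coordinate. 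I expect this bottom case — reconciling the possibly discrete smallest archimedean component $B$ with the divisibility forced by (3) — to be the main obstacle, and the precise point at which the full strength of (3), namely that \emph{every} non-trivial convex quotient is divisible, is used.
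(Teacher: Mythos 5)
The paper never proves this statement: it is quoted as a Fact and attributed to Zakon (Theorem~2.1) and Conrad (Propositions~1 and~4), so there is no internal proof to compare yours against; your argument has to stand on its own, and it does. The cyclic chain (2)$\Rightarrow$(1)$\Rightarrow$(3)$\Rightarrow$(2) is correct. The two short implications are fine: cosets of convex subgroups are convex, a non-trivial subgroup of a torsion-free ordered group is infinite, and $p$-divisibility for all primes yields divisibility. In the main direction (3)$\Rightarrow$(2), the case split on whether $G_\gamma$ is trivial (for $\gamma = v(d)$, $d$ the minimal gap) is sound, and the key lifting observation --- that any coset of a convex subgroup lying \emph{strictly} between $\ol{a}$ and $\ol{b}$ in the quotient is entirely contained in $[a,b]$ --- is exactly what makes the reduction work. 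Your ``delicate'' bottom case also goes through: when all $g_i$ lie in one coset of the smallest non-trivial convex subgroup $B = G^\gamma$, divisibility of $G/B$ (trivially satisfied if $B = G$) gives $c_0$ with $nc_0 \in g_1 + B$, and the H\"older dichotomy for the archimedean group $B$ (cyclic or dense in $\R$) guarantees that the interval $[g_1 - nc_0,\, g_n - nc_0] \subseteq B$, whose length $g_n - g_1$ is at least $(n-1)d$, meets $nB$: in the cyclic case because it contains $n$ consecutive multiples of the generator, in the dense case because $nB$ is dense. Two small repairs are needed in the write-up. First, the case $n = 1$ of (2) must be dispatched separately (take $c = g_1$), since your minimal gap $d$ is undefined there. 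Second, the phrase ``the interval spanned by the points meets every coset of $nB$ in $B$'' is only literally meaningful after translating $[g_1, g_n] \subseteq g_1 + B$ into $B$ (e.g.\ by $-nc_0$ as above); the intended argument is clearly this one, but as written the interval and the cosets live in different cosets of $B$.
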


	The following characterisation of regular densely ordered abelian groups is given in \cite[Theorem~2.1.~(a)]{zakon}.

	\begin{fact}\thlabel{prop:regimpliesdense}
		Let $G$ be an ordered abelian group. Then $G$ is dense in $\div{G}$ if and only if it is regular and densely ordered.
	\end{fact}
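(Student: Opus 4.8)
The plan is to prove both directions of the equivalence, using the natural valuation $v$ on $G$ and its value set $vG = v\div{G}$ as the central tool, since an element of $\div{G}$ that is the limit of elements of $G$ must be approximated within a fixed archimedean class. I would state the theorem as a proof of two implications and organise the argument around the equivalent characterisations of regularity in \thref{fact:defregular}.

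For the forward direction, suppose $G$ is dense in $\div{G}$. First I would observe that density immediately forces $G$ to be densely ordered: a discrete $G$ contains $\Z$ as a convex subgroup, so $1$ has no element of $G$ strictly between $0$ and itself, yet $\frac12 \in \div{G}$ lies strictly between $0$ and $1$ and cannot be approximated, contradicting density. For regularity I would verify condition~\thref{fact:defregular2}: given $a \leq g_1 < \dots < g_n \leq b$ in $G$ and a prime or natural number $n$, the element $\frac{a+b}{2n}$ (or a suitable rational multiple landing in the interval $[a/n, b/n]$) lies in $\div{G}$, and by density there is some $c \in G$ arbitrarily close to it; the gap between consecutive $g_i$ guarantees the interval $[a,b]$ is wide enough (relative to the archimedean class of $b-a$) that the approximating $c$ can be taken to satisfy $a \leq nc \leq b$. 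The key quantitative point is that the $n$ distinct elements $g_1 < \dots < g_n$ certify that $b - a$ dominates, so scaling by $\frac1n$ leaves room for the density error.

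For the converse, assume $G$ is regular and densely ordered, and let $x \in \div{G}$ with $x \neq 0$; I must find elements of $G$ in every interval $(x - \varepsilon, x + \varepsilon)$ for $\varepsilon \in \div{G}^{>0}$. Write $nx = g \in G$ for some $n \in \N$ and $g \in G$. The idea is to apply characterisation~\thref{fact:defregular2} to the element $g$: since $G$ is densely ordered there are many elements of $G$ in any genuine interval around $\frac{g}{n}$, and regularity produces some $c \in G$ with $nc$ lying in a prescribed narrow interval containing $g$, i.e.\ $nc$ is close to $g$, whence $c$ is close to $x = g/n$. More carefully, to realise an $\varepsilon$-approximation I would choose an interval $[g - n\delta, g + n\delta] \cap G$ (with $\delta \in G$ small, available by dense orderedness) containing enough distinct elements of $G$ to trigger \thref{fact:defregular2}, yielding $c \in G$ with $g - n\delta \leq nc \leq g + n\delta$, hence $|c - x| \leq \delta < \varepsilon$.

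The main obstacle will be the converse direction, specifically ensuring that the interval fed into the regularity condition \thref{fact:defregular2} really contains $n$ distinct elements of $G$ so that the hypothesis of the condition is met; this is exactly where dense orderedness is indispensable, because in a discretely ordered regular group (such as $\Z$) the intervals are too sparse and the conclusion fails. I would handle this by first reducing to approximating $x$ from within the convex hull of its archimedean class and using that a densely ordered group has arbitrarily fine subdivisions there. A cleaner alternative, which I would mention, is to invoke \thref{fact:defregular} part~\thref{fact:defregular1} directly: the open interval $(g - n\delta, g + n\delta)$ is infinite and convex, so it contains an $n$-divisible element $nc$, giving the desired $c$ at once and bypassing the counting argument entirely.
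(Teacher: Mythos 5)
The paper offers no proof of this statement at all: it is stated as a fact quoted from \cite[Theorem~2.1~(a)]{zakon}. Your proposal is therefore a self-contained argument where the paper has only a citation, and its main line is sound. In the forward direction you can discard all talk of archimedean classes and quantitative gaps: if $n \geq 2$, the hypothesis of \Autoref{fact:defregular}~(\ref{fact:defregular2}) already forces $a < b$ (since $a \leq g_1 < g_n \leq b$), hence $a/n < b/n$ in $\div{G}$, and density of $G$ in $\div{G}$ directly hands you $c \in G$ with $a/n < c < b/n$, i.e.\ $a < nc < b$; the case $n=1$ is witnessed by $g_1$ itself. (Dense orderedness is likewise immediate: apply density of $G$ in $\div{G}$ to any pair $a<b$ of elements of $G$.) In the converse, applying (\ref{fact:defregular2}) to $[g - n\delta,\, g + n\delta]$, whose endpoints lie in $G$ and which therefore contains arbitrarily long finite chains of elements of $G$, is exactly the right move, and reducing density to the approximation of each single point of $\div{G}$ is legitimate because $\div{G}$ has midpoints.

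Two points need repair, both in the converse. First, the parenthetical ``with $\delta \in G$ small, available by dense orderedness'' hides the only genuinely non-trivial step of this direction: you need that for every $\varepsilon \in \div{G}$ with $\varepsilon > 0$ there is $\delta \in G$ with $0 < \delta < \varepsilon$, i.e.\ that $0$ is a limit point of $G$ in $\div{G}$. This is \emph{not} the definition of densely ordered, which only provides elements of $G$ between elements of $G$; it is precisely \Autoref{prop:denseeq}~(\ref{prop:denseeq:1})$\,\Rightarrow\,$(\ref{prop:denseeq:3}), which the paper in turn defers to \cite[Lemma~2.3]{zakon}. It can be filled in by a halving argument: write $\varepsilon = h/m$ with $h \in G$, $h>0$, $m \in \N$, set $\delta_0 = h$, and, given $\delta_k > 0$ with $2^k\delta_k \leq h$, pick $u \in G$ with $0 < u < \delta_k$ and put $\delta_{k+1} = \min(u, \delta_k - u)$, so that $2\delta_{k+1} \leq \delta_k$; choosing $k$ with $2^k > m$ gives $m\delta_k < h$, i.e.\ $\delta_k < \varepsilon$. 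Since your whole converse hinges on this, it must be proved or cited, not waved at. Second, your proposed ``cleaner alternative'' is incorrect as stated: \Autoref{fact:defregular}~(\ref{fact:defregular1}) yields $p$-divisible elements only for \emph{primes} $p$, so for composite $n$ it does not produce an $n$-divisible element of the interval at once; iterating along the prime factorisation of $n$ requires verifying that the relevant sets of quotients remain infinite and convex, which is exactly the bookkeeping the alternative was supposed to avoid. Keep the main argument via (\ref{fact:defregular2}).
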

	
		

	\Autoref{prop:regimpliesdense} enables us to apply known results on regular densely ordered abelian groups to  ordered abelian groups which are dense in their divisible hull. 
	
	\begin{corollary}\thlabel{cor:regaxiom}
		There is a recursive $\Log$-theory { axiomatising} the class of non-trivial ordered abelian groups which are dense in their divisible hull.
	\end{corollary}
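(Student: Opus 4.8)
The plan is to combine \Autoref{prop:regimpliesdense} with the syntactically first-order characterisation of regularity supplied by \Autoref{fact:defregular2}. By \Autoref{prop:regimpliesdense}, the class of non-trivial ordered abelian groups which are dense in their divisible hull coincides with the class of non-trivial regular densely ordered abelian groups, so it suffices to exhibit a recursive $\Log$-theory axiomatising the latter. The axioms of ordered abelian groups form a finite, hence recursive, set of $\Log$-sentences; non-triviality is captured by the single sentence $\exists x\, x \neq 0$; and being densely ordered is captured by $\forall x\,(x > 0 \rightarrow \exists y\,(0 < y < x))$, expressing that there is no least positive element (which for an ordered abelian group is equivalent to density in itself, by translation). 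Note that both extra sentences are genuinely needed, since the trivial group is vacuously regular and densely ordered, and is excluded only by non-triviality.

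It then remains to axiomatise regularity by a recursive set of $\Log$-sentences, and here lies the one point deserving attention: among the equivalent conditions in \Autoref{fact:defregular}, condition \Autoref{fact:defregular1} and the third condition quantify over infinite convex subsets and non-trivial convex subgroups respectively, which are not first-order notions, whereas condition \Autoref{fact:defregular2} becomes a single first-order statement once the parameter $n$ is fixed. For each $n \in \N$, writing $nc$ for the $\Log$-term $\underbrace{c + \dots + c}_{n}$, I would set $\rho_n$ to be the $\Log$-sentence
\[
\forall a\, \forall b\, \forall g_1 \dots \forall g_n\, \Bigl( (a \leq g_1 \wedge g_1 < g_2 \wedge \dots \wedge g_{n-1} < g_n \wedge g_n \leq b) \rightarrow \exists c\, (a \leq nc \wedge nc \leq b) \Bigr),
\]
so that an ordered abelian group satisfies $\{\rho_n \mid n \in \N\}$ precisely when it satisfies condition \Autoref{fact:defregular2}, i.e.\ precisely when it is regular.

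The desired theory is then the union of the finitely many sentences above with $\{\rho_n \mid n \in \N\}$, and by the preceding remarks its models are exactly the non-trivial regular densely ordered abelian groups, equivalently, by \Autoref{prop:regimpliesdense}, exactly the non-trivial ordered abelian groups dense in their divisible hull. The main (and only mild) obstacle is verifying recursiveness: the finitely many non-schematic axioms are unproblematic, and the map $n \mapsto \rho_n$ is plainly computable, since from $n$ one can mechanically produce the string defining $\rho_n$ (including the abbreviation $nc$) and, conversely, from the syntactic shape of a given sentence one can decide whether it equals $\rho_n$ for some necessarily unique $n$. Hence the set of axioms is recursive, which completes the argument.
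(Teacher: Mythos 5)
Your proposal is correct and follows essentially the same route as the paper: both reduce via \Autoref{prop:regimpliesdense} to axiomatising non-trivial regular densely ordered abelian groups, and both render regularity by the schema coming from \Autoref{fact:defregular}~(\ref{fact:defregular2}), one sentence per $n \in \N$ (your universally quantified $\rho_n$ is logically equivalent to the paper's implication with an existential antecedent). The only difference is that you spell out the density and non-triviality axioms and the recursiveness check explicitly, which the paper leaves implicit.
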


	\begin{proof}
		Let $\Sigma$ be the $\Log$-theory consisting of the axioms for non-trivial densely ordered abelian groups plus the collection of the following sentences (one for each $n \in \N$):
		$$ \forall a, b \ (\exists g_1,\ldots,g_n  \ a \leq g_1 < \ldots < g_n \leq b \to \exists c\ a \leq nc \leq b).$$
		By \Autoref{fact:defregular}~(\ref{fact:defregular2}), $\Sigma$ axiomatises the class of non-trivial regular densely ordered abelian groups, and thus also the class of non-trivial ordered abelian groups which are dense in their divisible hull by \Autoref{prop:regimpliesdense}.
	\end{proof}

	In \Autoref{cor:denserec}~\eqref{cor:denserec:2} we will see the analogue of \Autoref{cor:regaxiom} for the class of ordered fields which are dense in their real closure.
	We deduce further properties of ordered abelian groups which are dense in their divisible hull from the following fact, which is due to \cite{robinson} and \cite{zakon}.
	
	\begin{fact}\thlabel{fact:regrobinson}
		Let $G$ be an ordered abelian group. Then the following hold:
		
		\begin{enumerate}[wide, labelwidth=!, labelindent=6pt]
			\item \thlabel{fact:regrobinson:1} $G$ is regular if and only if it has an archimedean model (i.e.\ there is some archimedean ordered abelian group $H$ such that $H \equiv G$).
			
			\item $G$ is regular and discretely ordered if and only if it is a $\Z$-group (i.e.\ $G \equiv \Z$ as ordered groups).
		\end{enumerate}
	\end{fact}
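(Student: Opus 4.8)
The plan is to prove the first equivalence and then derive the second from it. Throughout I use that regularity and discreteness are first-order and hence preserved under elementary equivalence: condition \Autoref{fact:defregular}~(\ref{fact:defregular2}) is a first-order axiom schema (one sentence per $n \in \N$), and ``$G$ is discretely ordered'' is expressed by the sentence $\exists x\,(x>0 \wedge \forall y\,(y>0 \to y \geq x))$ asserting a least positive element. For the backward direction of (1), suppose $G$ has an archimedean model $H \equiv G$. An archimedean ordered abelian group is regular: fix $h \in (H')^{>0}$ for a non-trivial convex subgroup $\{0\} \neq H' \subseteq H$; for any $g \in H^{>0}$ the archimedean property yields $n \in \N$ with $g < nh$, and since $nh \in H'$ and $0 < g < nh$, convexity gives $g \in H'$, so $H' = H$. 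Thus the only non-trivial convex subgroup of $H$ is $H$ itself and condition \Autoref{fact:defregular}~(3) holds trivially. As regularity is elementary and $G \equiv H$, the group $G$ is regular as well.

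For the forward direction of (1) --- the substantive part --- I would realize the complete theory of $G$ by an explicit archimedean group. Attach to a regular $G$ its discreteness type together with the invariants $s_p := \dim_{\mathbb{F}_p}(G/pG) \in \{0,1,2,\ldots,\infty\}$ for each prime $p$ (equivalently the indices $[G:pG]$). These invariants can be realised by a subgroup $H$ of $\R$, hence by an archimedean group: in the dense case one assembles $H$ from $\Q$ (contributing $s_p = 0$) and copies of the localisations $\Z_{(p)}$ embedded along $\Q$-independent reals (each copy contributing $1$ to the $p$-rank and $0$ to the $q$-rank for $q \neq p$), allowing any prescribed finite or infinite value of $s_p$; the resulting subgroup of $\R$ is automatically densely ordered unless it is cyclic. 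It then remains to show that the discreteness type together with $(s_p)_p$ forms a complete set of invariants for the theory of a regular ordered abelian group, so that $H \equiv G$. This is the heart of the Robinson--Zakon analysis, and the main obstacle: after expanding $\Log$ by the divisibility predicates $D_n(x)$ (reading ``$n \mid x$'') and, in the discrete case, a constant for the least positive element, the theory of regular ordered abelian groups admits quantifier elimination, and its completions are pinned down exactly by these invariants. I expect the back-and-forth underlying this quantifier elimination --- which must reconcile the possibly large natural-valuation structure of an arbitrary regular group with the simple archimedean invariants --- to be the technical core.

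Finally, (2) follows from (1). If $G$ is a $\Z$-group then $G \equiv \Z$, and $\Z$ is regular and discretely ordered, so $G$ inherits both properties by elementarity. Conversely, if $G$ is regular and discretely ordered, then by (1) it has an archimedean model $H \equiv G$; here $H$ is again discretely ordered, and an archimedean discretely ordered group is isomorphic to $\Z$: its least positive element $u$ generates it, since for $g \in H^{>0}$, taking the minimal $n \in \N$ with $g \leq nu$ (which exists by the archimedean property) forces $0 \leq nu - g < u$, whence $nu - g = 0$ by minimality of $u$ and thus $g = nu$. Therefore $G \equiv H \cong \Z$, that is, $G$ is a $\Z$-group.
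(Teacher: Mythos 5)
The paper offers no proof of this statement at all: it is quoted as a known Fact from \cite{robinson} and \cite{zakon}, so the only ``paper proof'' to compare against is a citation. Judged as a self-contained argument, your proposal has a genuine gap, and it sits exactly at the core. What you do establish is correct: the backward direction of (1) (an archimedean group has no proper non-trivial convex subgroup, hence is regular by condition (3) of \Autoref{fact:defregular}, and regularity transfers under $\equiv$ because condition (2) is an axiom schema); the realisation of any prescribed invariants $(s_p)_p$ by a subgroup of $\R$ built from $\Q$ and $\Q$-independently placed copies of the localisations; and the deduction of (2) from (1), including the argument that an archimedean discretely ordered group is generated by its least positive element. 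But the forward direction of (1) is made to rest on the claim that the discreteness type together with the indices $[G:pG]$ is a complete set of invariants for the elementary theory of regular ordered abelian groups (via quantifier elimination after adjoining divisibility predicates $D_n$ and, in the discrete case, a constant). That claim you never prove: you explicitly flag it as what ``remains to be shown'', call it ``the main obstacle'', and say you ``expect'' the back-and-forth to go through. This completeness theorem is precisely the substance of the Robinson--Zakon analysis that the paper cites for this Fact, and it is at least as deep as the implication being proved; without it, the realisability construction is inert, since knowing that some archimedean group shares the invariants of $G$ yields $H \equiv G$ only once one knows the invariants determine the theory.

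So the verdict depends on the ground rules. If you may invoke the classification of complete theories of regular ordered abelian groups from \cite{robinson} as a known result, your argument is a correct (if slightly roundabout) derivation of the Fact and is in the same spirit as the paper's own treatment, which simply cites that source. As a blind, self-contained proof, however, the technical heart is missing, and everything you actually carry out is the peripheral, elementary part of the statement.
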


	
	
	\begin{corollary}\thlabel{rmk:denseindivregular}
		Let $G$ be an ordered abelian group. Then $G$ is dense in $\div{G}$ if and only if $G$ has a densely ordered archimedean model.
	\end{corollary}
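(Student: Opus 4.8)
The statement to prove is \Autoref{rmk:denseindivregular}: an ordered abelian group $G$ is dense in $\div{G}$ if and only if $G$ has a densely ordered archimedean model.

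The plan is to combine the two facts already established in the excerpt, namely \Autoref{prop:regimpliesdense} and \Autoref{fact:regrobinson}~\eqref{fact:regrobinson:1}, by threading the property ``regular and densely ordered'' through both equivalences. First I would handle the forward direction: suppose $G$ is dense in $\div{G}$. By \Autoref{prop:regimpliesdense}, $G$ is then regular and densely ordered. Since $G$ is regular, \Autoref{fact:regrobinson}~\eqref{fact:regrobinson:1} supplies an archimedean ordered abelian group $H$ with $H \equiv G$. The remaining task in this direction is to verify that $H$ can be taken to be \emph{densely ordered}: being densely ordered is expressible by a single $\Log$-sentence (there is no least positive element, equivalently $\forall a\, (a>0 \to \exists b\, (0<b<a))$), so it is preserved under elementary equivalence. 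Hence from $H \equiv G$ and $G$ densely ordered we conclude $H$ is densely ordered, giving the desired densely ordered archimedean model.

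For the converse, suppose $G$ has a densely ordered archimedean model $H$, so $H \equiv G$ with $H$ archimedean and densely ordered. Since $H$ is archimedean, it is in particular regular (an archimedean group trivially satisfies the regularity conditions of \Autoref{fact:defregular}, e.g.\ it has no non-trivial proper convex subgroups, so condition~(3) holds vacuously), and regularity is an elementary property, so $G \equiv H$ forces $G$ to be regular as well. As above, dense orderedness transfers from $H$ to $G$ by elementary equivalence, so $G$ is densely ordered. Now \Autoref{prop:regimpliesdense} applies to $G$ directly and yields that $G$ is dense in $\div{G}$, completing the equivalence.

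I expect the only genuine point requiring care — rather than a true obstacle — to be the transfer of ``densely ordered'' across elementary equivalence, and relatedly the observation that regularity and dense orderedness are each first-order axiomatisable in $\Log$; once these are pinned down, the argument is a direct chaining of \Autoref{prop:regimpliesdense} with \Autoref{fact:regrobinson}~\eqref{fact:regrobinson:1}. One should also note that \Autoref{fact:regrobinson}~\eqref{fact:regrobinson:1} as stated only guarantees \emph{some} archimedean model, so the crux is precisely upgrading that model to a densely ordered one via preservation under $\equiv$, which is exactly why the hypothesis and conclusion both carry the ``densely ordered'' qualifier.
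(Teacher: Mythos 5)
Your proof is correct and follows essentially the same route as the paper, which deduces both directions immediately from \Autoref{prop:regimpliesdense} and \Autoref{fact:regrobinson}~(\ref{fact:regrobinson:1}); you merely make explicit the (routine) transfer of ``densely ordered'' and regularity across elementary equivalence, which the paper leaves implicit.
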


	\begin{proof}
		Both directions follow immediately from \Autoref{prop:regimpliesdense} and {   \Autoref{fact:regrobinson}\linebreak (\ref{fact:regrobinson:1})}.
	\end{proof}

	We will see in \Autoref{thm:densitytransfers2} that the analogue of the backward direction of \Autoref{rmk:denseindivregular} also holds for ordered fields, i.e.\ that any ordered field which has an archimedean model is dense in its real closure. However, the analogue of the forward direction of \Autoref{rmk:denseindivregular} does not hold for ordered fields, as there is an ordered field which is dense in its real closure but has no archimedean model (see \Autoref{prop:ordfieldnoarchmodel}). 
	
	In \Autoref{sec:defconvval}, we address the question what convex valuations are $\Lor$-definable in ordered fields. In analogy to \Autoref{constr:val}, we will show in \Autoref{prop:subgroupdef} that for any densely ordered abelian group which is not dense in its divisible hull, there exists a proper non-trivial convex $\Log$-definable subgroup. The following lemma is a useful characterisation of densely ordered abelian groups and will be applied several times.
	
	\begin{lemma}\thlabel{prop:denseeq}
		Let $G$ be an ordered abelian group. Then the following are equivalent:
		
		\begin{enumerate}[wide, labelwidth=!, labelindent=6pt]
			\item \thlabel{prop:denseeq:1} $G$ is densely ordered.
			
			\item \thlabel{prop:denseeq:2} $0$ is a limit point of $G$ in $G$.
			
			\item \thlabel{prop:denseeq:3} $0$ is a limit point of $G$ in $\div{G}$.
			
			\item  \thlabel{prop:denseeq:4} Either $vG$ has no last element, or $vG$ has a last element $\gamma$ and $B_\gamma$ is densely ordered.
		\end{enumerate} 
	\end{lemma}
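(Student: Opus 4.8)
The plan is to prove the equivalences by establishing a cycle together with the slightly more delicate characterisation in terms of the natural valuation. The statements \eqref{prop:denseeq:2} and \eqref{prop:denseeq:3} are essentially unwinding definitions: being densely ordered means $G$ is dense in itself, which is exactly the assertion that $0$ is a limit point of $G$ in $G$ (since density in itself is equivalent to $0$ having arbitrarily small positive elements of $G$ below it, by translation-invariance of the group order). Thus \eqref{prop:denseeq:1}$\Leftrightarrow$\eqref{prop:denseeq:2} should follow immediately from the definitions. The implication \eqref{prop:denseeq:2}$\Rightarrow$\eqref{prop:denseeq:3} is trivial since $G \subseteq \div{G}$, so any witness in $G$ lies in $\div{G}$.

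For \eqref{prop:denseeq:3}$\Rightarrow$\eqref{prop:denseeq:1} I would argue contrapositively: if $G$ is not densely ordered, then by definition $\Z$ is a convex subgroup, so $G$ has a least positive element $1$. I then need to check that $0$ cannot be a limit point of $G$ in $\div{G}$. The point is that the divisible hull adds no new elements strictly between $0$ and $1$ that are approached by elements of $G$; more precisely, since $1$ is the least positive element of $G$, the interval $(0,1)$ in $\div{G}$ contains no elements of $G$, so $0$ fails to be a right-sided limit point, and by symmetry it is not a left-sided limit point either. This forces \eqref{prop:denseeq:3} to fail.

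The main work is the equivalence with \eqref{prop:denseeq:4}, phrased via the natural valuation $v$ and the archimedean components $B_\gamma$. Here I would use the fact that $G$ is densely ordered precisely when it has arbitrarily small positive elements. If $vG$ has no last element, then for any $g \in G^{>0}$ there is some $h \in G^{>0}$ with $v(h) > v(g)$, hence $h$ is archimedean-smaller than $g$, witnessing that there are arbitrarily small positive elements, so $G$ is densely ordered. Conversely, if $vG$ has a last element $\gamma$, the smallest positive elements of $G$ all have value $\gamma$, and whether $G$ is densely ordered is governed entirely by the bottom component: the elements of value exactly $\gamma$ correspond (modulo $G_\gamma$) to elements of $B_\gamma = G^\gamma/G_\gamma$, and since $\gamma$ is the top value, $G_\gamma$ is trivial, so $G^\gamma$ embeds into an archimedean group isomorphic to $B_\gamma$. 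Thus $G$ has a least positive element if and only if $B_\gamma$ does, i.e.\ $G$ is densely ordered if and only if $B_\gamma$ is densely ordered.

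I expect the last equivalence to be the only genuine obstacle, and within it the subtle point is justifying that when $\gamma$ is the last element of $vG$ the archimedean behaviour of $G$ near $0$ is completely controlled by $B_\gamma$ (in particular that $G_\gamma = \{0\}$, so that no strictly finer scale of elements exists to be approached). Once this localisation to the bottom archimedean component is in place, the dense-versus-discrete dichotomy for $G$ reduces cleanly to the same dichotomy for the archimedean group $B_\gamma$, and the proof assembles into the cycle \eqref{prop:denseeq:1}$\Rightarrow$\eqref{prop:denseeq:2}$\Rightarrow$\eqref{prop:denseeq:3}$\Rightarrow$\eqref{prop:denseeq:1} together with \eqref{prop:denseeq:1}$\Leftrightarrow$\eqref{prop:denseeq:4}.
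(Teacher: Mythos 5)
Your treatment of (\ref{prop:denseeq:1})$\Leftrightarrow$(\ref{prop:denseeq:2}), of (\ref{prop:denseeq:3})$\Rightarrow$(\ref{prop:denseeq:1}) via the least positive element, and of the equivalence with (\ref{prop:denseeq:4}) (using that $G_\gamma=\{0\}$ when $\gamma$ is the last value, so that $B_\gamma$ is identified with the smallest convex subgroup $G^\gamma$, to which density at $0$ localises) is essentially correct and in substance matches the paper. The problem is the step (\ref{prop:denseeq:2})$\Rightarrow$(\ref{prop:denseeq:3}), which you dismiss as trivial ``since $G\subseteq\div{G}$, so any witness in $G$ lies in $\div{G}$.'' You have the trivial and non-trivial directions backwards. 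By the definition used in the paper, $0$ being a limit point of $G$ in $\div{G}$ requires that for \emph{every} positive radius $g_1\in\div{G}$ the interval $(0,g_1)$ (or $(-g_1,0)$) meets $G$; in (\ref{prop:denseeq:2}) the radii range only over $G^{>0}$. Since $\div{G}$ has more positive elements to test against, (\ref{prop:denseeq:3}) is the \emph{stronger} statement: the direction that follows at once from $G\subseteq\div{G}$ is (\ref{prop:denseeq:3})$\Rightarrow$(\ref{prop:denseeq:2}) (this is what the paper calls obvious), not the converse.

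Concretely, proving (\ref{prop:denseeq:2})$\Rightarrow$(\ref{prop:denseeq:3}) amounts to showing: for every $a\in G^{>0}$ and every $n\in\N$ there is $g\in G$ with $0<ng<a$, so that $g$ lies below the new element $a/n$ of the divisible hull. This does not follow by ``passing witnesses along''; it needs an argument, for instance the halving trick: given $g>0$, density yields $h$ with $0<h<g$, and then $g'=\min\{h,\,g-h\}$ satisfies $0<2g'\leq g$; iterating $k$ times from $a$ produces $g''>0$ with $2^k g''\leq a$, hence $ng''<a$ once $2^k>n$. The paper handles exactly this point by citing Zakon's Lemma~2.3 for the implication (\ref{prop:denseeq:1})$\Rightarrow$(\ref{prop:denseeq:3}). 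Without this (or an equivalent) argument, your cycle (\ref{prop:denseeq:1})$\Rightarrow$(\ref{prop:denseeq:2})$\Rightarrow$(\ref{prop:denseeq:3})$\Rightarrow$(\ref{prop:denseeq:1}) never establishes that (\ref{prop:denseeq:1}) or (\ref{prop:denseeq:2}) implies (\ref{prop:denseeq:3}), so the equivalence of (\ref{prop:denseeq:3}) with the remaining conditions is not proved.
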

	
	\begin{proof}
		We may assume that $G \neq \{0\}$, as in the case $G = \{0\}$ the e\-quiv\-a\-lences are clear.
		The equivalence of (\ref{prop:denseeq:1}) and (\ref{prop:denseeq:2}) is an easy consequence of the definition of a dense order on an abelian group. 
		Obviously, (\ref{prop:denseeq:3}) implies (\ref{prop:denseeq:2}). In \cite[Lemma~2.3]{zakon} it is shown that (\ref{prop:denseeq:1}) implies (\ref{prop:denseeq:3}).
		
		It remains to show that (\ref{prop:denseeq:1}) and (\ref{prop:denseeq:4}) are equivalent. Suppose that $vG$ has a last element $\gamma$ and $B_\gamma$ is not densely ordered. By (\ref{prop:denseeq:2}) applied to $B_\gamma$, there is some $g \in G$ with $g>0$ and $v(g) = \gamma$ such that there is no element in $B_\gamma$ strictly between $0 + G_\gamma$ and $g + G_\gamma$. Let $h\in G$ such that $0 < h \leq g$. Then $v(h) \geq v(g) = \gamma$. By maximality of $\gamma$, we have $v(h) = \gamma$. Thus, $0 + G_\gamma < h + G_\gamma \leq g + G_\gamma$. This implies $h + G_\gamma = g + G_\gamma$, i.e.\ $v(g-h) > \gamma$. Again, by maximality of $\gamma$, we obtain $g = h$. Hence, there is no element in $G$ strictly between $0$ and $g$, showing that $G$ is not densely ordered. This shows that (\ref{prop:denseeq:1}) implies (\ref{prop:denseeq:4}). For the converse, suppose that (\ref{prop:denseeq:4}) holds. We will show that $0$ is a limit point of $G$ in $G$. Let $g \in G$ with $g > 0$. If $v(g)$ is not maximal, then let $h \in G$ with $h > 0$ and $v(h) > v(g)$. Then $0<h<g$, as required. Otherwise, $\gamma = v(g)$ is the maximum of $vG$, and by assumption, $B_\gamma$ is densely ordered. Thus, there is some $h \in G$ with $v(h) = \gamma$ such that $0 + G_\gamma < h + G_\gamma < g + G_\gamma$, whence $0 < h < g$.
	\end{proof}

	{ 	Note that a divisible ordered abelian group $G$ never admits a proper non-trivial $\Log$-definable convex subgroup. Indeed, since $G$ is o-minimal, any $\Log$-definable convex subset of $G$ is an interval with endpoints in $G \cup \{\pm \infty\}$, and thus the only $\Log$-definable convex subgroups of $G$ are $\{0\}$ and $G$. The following dichotomy shows a strengthening of this observation.}

	\begin{proposition}\thlabel{prop:subgroupdef}
		Let $G$ be a densely ordered abelian group. Then exactly one of the following holds:
		\begin{enumerate}[wide, labelwidth=!, labelindent=6pt]
			\item $G$ is dense in $\div{G}$.
			
			\item $G$ has a proper non-trivial $\Log$-definable convex subgroup.
		\end{enumerate}
		Moreover, if $G$ is not dense in $\div{G}$, then the proper non-trivial $\Log$-definable convex subgroup of $G$ can be defined by an $\Log$-formula using one parameter.
	\end{proposition}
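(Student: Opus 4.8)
The plan is to reduce the dichotomy to the regularity criterion. Since $G$ is densely ordered, \Autoref{prop:regimpliesdense} tells us that condition~(1), that $G$ is dense in $\div{G}$, is equivalent to $G$ being regular. So the proposition amounts to the statement that a densely ordered abelian group is regular if and only if it admits no proper non-trivial $\Log$-definable convex subgroup, with the additional information that in the non-regular case a single parameter suffices. I would prove the two implications separately, which simultaneously yields ``at most one'' (exclusivity) and ``at least one''.

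For the exclusivity --- showing that (1) and (2) cannot both hold --- suppose $G$ is regular. By \Autoref{fact:regrobinson}\eqref{fact:regrobinson:1}, $G$ has an archimedean model $H$, i.e.\ $H \equiv G$ with $H$ archimedean. Now observe that, for any fixed $\Log$-formula $\varphi(x,\bar y)$, the assertion ``$\varphi(x,\bar a)$ defines a proper non-trivial convex subgroup'' is expressible by an $\Log$-formula $\psi_\varphi(\bar y)$: being a subgroup (containing $0$ and closed under $x-y$), being convex, being non-trivial ($\exists x\,(\varphi(x)\wedge x\neq 0)$) and being proper ($\exists x\,\neg\varphi(x)$) are all first-order in the parameters $\bar y$. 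If $G$ had a proper non-trivial $\Log$-definable convex subgroup, then $G \models \exists\bar y\,\psi_\varphi(\bar y)$ for the relevant $\varphi$, hence $H \models \exists\bar y\,\psi_\varphi(\bar y)$ by elementary equivalence, so $H$ would have a proper non-trivial convex subgroup. But an archimedean ordered abelian group has only $\{0\}$ and itself as convex subgroups (a non-trivial convex subgroup contains some $c>0$, and then all of $H$ by the archimedean property), a contradiction. Thus (1) implies the negation of (2).

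For the existence direction, assume $G$ is not regular. By \Autoref{fact:defregular}\eqref{fact:defregular1} there are a prime $p$ and an infinite convex subset $A \subseteq G$ containing no $p$-divisible element; since $0$ is $p$-divisible, $A$ lies entirely in $G^{>0}$ or in $G^{<0}$, and we may assume $A \subseteq G^{>0}$. The crucial point is that $pG = \setbr{py \mid y \in G}$ is $\Log$-definable without parameters (by $\exists y\,(\,\underbrace{y+\dots+y}_{p}=x)$), so for any single parameter $b$ the set $$C_b := \setbr{x \in G \mid (b-|x|,\, b+|x|)\cap pG = \emptyset}$$ is $\Log$-definable using only the one parameter $b$. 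I would then choose $b \in A$ appropriately and show that $C_b$ is a proper non-trivial convex subgroup. That $C_b$ is convex and symmetric is immediate from the definition (it depends only on $|x|$, and the radius set $\setbr{z\ge 0 \mid (b-z,b+z)\cap pG=\emptyset}$ is an initial segment of $G^{\ge 0}$); properness holds because $b\notin pG$ forces $b\notin C_b$, and non-triviality follows from $G$ being densely ordered together with the infinitude of the gap $A$, which guarantees arbitrarily small nonzero $x$ whose interval around an interior point $b$ of $A$ stays inside the $pG$-free gap.

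The main obstacle is verifying that $C_b$ is closed under addition, i.e.\ is genuinely a subgroup, which is exactly where the choice of $b$ and the hypothesis that $A$ is both infinite and convex enter. The radius of the $pG$-free interval around $b$ must be an archimedean ``cut'', i.e.\ must coincide with the boundary of a convex subgroup, so that two elements of $C_b$ cannot sum to an element whose interval reaches $pG$. I would establish this by analysing the archimedean components via \Autoref{prop:denseeq}\eqref{prop:denseeq:4}: because $G$ is densely ordered and $A$ is infinite, the gap containing $b$ extends across a whole convex subgroup $G_\gamma$ (the elements of valuation strictly above the archimedean class of the gap), and $C_b$ is precisely this $G_\gamma$, which is additively closed. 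Pinning down this choice of $b$ and the identification $C_b = G_\gamma$ is the technical heart of the argument; once it is done, combining this existence statement with the exclusivity above yields that exactly one of (1), (2) holds, and the construction visibly uses a single parameter, giving the final clause.
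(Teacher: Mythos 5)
Your first half (exclusivity) is correct and is in fact exactly the paper's argument: regularity via \Autoref{prop:regimpliesdense}, an archimedean model via \Autoref{fact:regrobinson}, and transfer of the first-order sentence ``no instance of $\varphi(x,\ul{y})$ defines a proper non-trivial convex subgroup''.

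The existence half, however, has a genuine gap, and it sits precisely where you locate it yourself: nothing in the proposal shows that $C_b$ is closed under addition, and the claim you would need --- that for a suitable $b$ the $pG$-free ``gap'' around $b$ is exactly a coset of a convex subgroup $G_\gamma$, so that $C_b = G_\gamma$ --- is not proved, only announced as ``the technical heart''. This is not a routine verification. The set $C_b$ is of the form $\setbr{x \mid |x| \in R}$ where $R = \setbr{z \geq 0 \mid (b-z,b+z)\cap pG = \emptyset}$ is merely an initial segment of $G^{\geq 0}$; for a generic $b$ in the regularity witness $A$, the $pG$-free interval around $b$ has a ``finite radius'' inside some archimedean class, and then two elements of $C_b$ near that radius sum to an element outside $C_b$, so $C_b$ is not a subgroup at all. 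Making the construction work therefore requires a careful choice of $b$ together with an analysis of where $pG$ accumulates on \emph{both} sides of $b$ at \emph{every} archimedean scale --- i.e.\ both that the gap contains a full coset $b + H$ with $H \neq \setbr{0}$ (itself needing a case distinction on whether $vG$ has a last element, using density of $pH_0$ in the archimedean last component) and, conversely, that the gap does not extend symmetrically beyond $b+H$. Neither direction is supplied, and your prescription for $b$ (``an interior point of $A$'') does not guarantee either. A smaller but symptomatic error: properness is justified by ``$b \notin pG$ forces $b \notin C_b$'', which is a non sequitur --- $b \in C_b$ means $(0,2b)\cap pG = \emptyset$, and ruling this out requires producing an element of $pG$ in $(0,2b)$ (possible in a densely ordered group, but by a different argument), not the fact that $b$ itself is not $p$-divisible.

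It is worth seeing how the paper sidesteps exactly this obstruction. Instead of a ``ball'' around a point of $G$, it takes a point $g_0 \in \div{G} \setminus \cl(G)$ (available since $G$ is not dense in $\div{G}$), forms the definable cut $D = \setbr{g \in G^{\geq 0} \mid Ng < g_1}$ with $g_0 = g_1/N$, and passes to the one-sided stabiliser $A = \setbr{g \in G^{\geq 0} \mid g + D \subseteq D}$. Closure under addition is then automatic ($g,h \in A$ gives $(g+h)+D \subseteq g + D \subseteq D$), and the only substantive point is non-triviality of $A$, which is exactly where the hypothesis enters: if $A$ were trivial, $g_0$ would be a limit point of $G$, contradicting $g_0 \notin \cl(G)$. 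Your approach discards this non-limit-point information (a regularity witness $(p,A)$ only gives an infinite convex $pG$-free set, whose endpoints may well be limit points of $G$), which is why you are forced onto the ball construction and then cannot recover the subgroup property. To repair the proposal you would either have to import the paper's stabiliser trick (and then you need $g_0 \notin \cl(G)$, i.e.\ \Autoref{prop:regimpliesdense} used in its full strength, not just non-regularity), or genuinely prove the coset identification for a well-chosen $b$, which is a substantial argument that the proposal does not contain.
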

	
	\begin{proof}
		First assume that $G$ is not dense in $\div{G}$. In the following, we will construct a proper non-trivial convex subgroup of $G$.
		
		Note that by \Autoref{rmk:denseindivregular}, $G$ is non-archimedean. Thus, $vG$ has more than one element.
		Let $g_0 \in \div{G} \setminus \cl(G)$ with $g_0>0$. If $\gamma = v(g_0)$ is the maximum of $vG$, then let $h \in G^{ >0}$ with $v(h) < \gamma$. Note that $v(g_0 + h) = v(h) < \gamma$ and that $g_0 + h$ is not a limit point of $G$ in $\div{G}$, as $g_0$ is not a limit point of $G$ in $\div{G}$. Hence, $g_0 + h \in \div{G} \setminus \cl(G)$. Hence, by replacing $g_0$ by $g_0 + h$ if necessary, we may assume that $v(g_0)$ is not the maximum of $vG$.
		
		Let $g_1 \in G$ with $g_1>0$ and $N\in \N$ such that $g_0 = \frac{g_1}N$. Consider the set 
		$$D = \setbr{g \in G^{\geq 0} \mid g < g_0} = \setbr{g \in G^{\geq 0} \mid Ng < g_1}.$$ This set is $\Log$-definable with the parameter $g_1$. Let $$A = \setbr{g \in G^{\geq 0} \mid g + D \subseteq D}.$$
		Again, $A$ is $\Log$-definable with the parameter $g_1$. Note that $A$ is convex. Obviously, $A \neq G^{\geq0}$, as for any $g \in D$ and $h \in G^{\geq0}$ with $h>g_0-g$ we have $g + h > g_0$ and thus $h \notin A$. Assume that $A = 0$. Then for any $g \in G^{> 0}$, let $g' \in G^{\geq 0}$ with $g' < g_0$ and $g + g' > g_0$ and set $c_g = g + g'$. Then $g_0 < c_g < g_0 + g$.
		Since by \Autoref{prop:denseeq}~(\ref{prop:denseeq:3}), $0$ is a limit point of $G$ in $\div{G}$, for any $h \in \div{G}$ with $h > 0$, there exists some $g \in G$ such that  $0<g<h$. Thus, $g_0< c_g < g_0 + g < g_0 + h$. This shows that $g_0$ is a limit point of $G$ in $\div{G}$, contradicting the choice of $g_0 \notin \cl(G)$. Hence, $ A \neq 0$.
		
		Now for any $a,b \in A$ with $0<a<b$, we have $(a + b) + D \subseteq a + D \subseteq D$, whence $a+b \in A$. Moreover, $0<b-a<b$ and $-b<a-b<0$. Thus, by convexity, $b-a \in A$ and $a-b \in -A$. Similarly, for any $a,b \in -A$ with $a<b<0$, we have $a\pm b \in -A$ and $b-a \in A$. This shows that $H = -A \cup A$ is closed under addition and thus an $\Log$-definable convex subgroup of $G$. Since $0 \neq A \neq G^{\geq 0}$, we also have that $H$ is a proper non-trivial subgroup of $G$.
		
		Assume for the converse that $G$ is dense in $\div{G}$. By \Autoref{prop:regimpliesdense}, $G$ is regular. Thus, by \Autoref{fact:regrobinson}~(\ref{fact:regrobinson:1}) $G$ has an archimedean model $G'$, say. Let $\varphi(x,\ul{y})$ be an $\Log$-formula and let $\psi$ be the $\Log$-sentence
		stating that for any $\ul{y}$, if $\varphi(x,\ul{y})$ defines a proper convex subgroup, then it is the trivial one. Since $G'$ is archimedean, its only convex subgroups are $\{0\}$ and $G'$. Thus, $\psi$ holds in $G'$. By elementary equivalence, $\psi$ also holds in $G$. Hence, for any $\ul{b} \in G$, the formula $\varphi(x,\ul{b})$ does not define a proper non-trivial convex subgroup of $G$.
	\end{proof}

	Next we want to study the extension of ordered abelian groups $G \subseteq \div{G}$ valuation theoretically. 
	It is known that any dense extension of ordered fields is immediate (cf.\ \cite[page
	29~f.]{kuhlmann}). We will investigate when this is also the case for dense extensions of ordered abelian groups. Note that for any $\gamma \in vG$, we have $B(\gamma,\div{G}) = \div{B(\gamma,G)}$. Hence, the extension $G \subseteq \div{G}$ is immediate if and only if all archimedean components of $G$ are divisible.
	
	\begin{proposition}\thlabel{prop:densimpimm}
		Let $G\subseteq H$ be a dense extension of ordered abelian groups. Suppose that $vG$ has no last element. Then the extension $G \subseteq H$ is immediate.
	\end{proposition}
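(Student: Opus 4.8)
The plan is to read off immediacy from the two conditions that define it for the natural valuation $v$: that the induced map $vG \to vH$ is onto, and that for every $\gamma$ the induced map on archimedean components $B(G,\gamma) \to B(H,\gamma)$ is onto. (As recalled just before the statement, an extension of valued abelian groups is immediate precisely when the value set and all archimedean components are preserved; here injectivity of both maps is automatic, since archimedean equivalence and the ordering of classes are arithmetic conditions insensitive to the ambient group, and since $g \in H_\gamma \cap G$ forces $v(g) > \gamma$, i.e.\ $g \in G_\gamma$.) We may assume $G \neq \{0\}$, as otherwise density forces $H = \{0\}$ and there is nothing to prove.

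First I would establish $vG = vH$ using density alone. Given $h \in H$ with $h > 0$, density applied to the pair $h < 2h$ yields $g \in G$ with $h < g < 2h$, whence $|h| \leq |g| \leq 2|h|$, so $g \sim h$ and $v(h) = v(g) \in vG$. With the symmetric argument for negative elements, every archimedean class of $H$ is already realised in $G$, so the order embedding $vG \hookrightarrow vH$ is surjective.

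The heart of the argument — and the only place the hypothesis on $vG$ enters — is surjectivity of $B(G,\gamma) \to B(H,\gamma)$. Fix $\gamma \in vG = vH$ and $h \in H$ with $v(h) = \gamma$; the case $v(h) > \gamma$ gives the zero coset, represented by $0 \in G$, and we may take $h > 0$. Since $vG$ has no last element, choose $\delta \in G$ with $\delta > 0$ and $v(\delta) > \gamma$. Density applied to $h - \delta < h + \delta$ produces $g \in G$ with $h - \delta < g < h + \delta$, so $|h - g| < \delta$. By convexity of the natural valuation this gives $v(h - g) \geq v(\delta) > \gamma$, hence $h - g \in H_\gamma$; moreover the ultrametric inequality applied to $g = h - (h-g)$, with $v(h) = \gamma < v(h-g)$, yields $v(g) = \gamma$. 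Thus $g \in G^\gamma$ and $g + G_\gamma \mapsto g + H_\gamma = h + H_\gamma$, proving the component map onto.

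Combining both surjectivity statements with the automatic injectivity shows that $vG \to vH$ and each $B(G,\gamma) \to B(H,\gamma)$ are isomorphisms of ordered structures, which is exactly the assertion that $G \subseteq H$ is immediate. I expect the only real obstacle to be the bookkeeping in the last paragraph: checking that $|h - g| < \delta$ with $v(\delta) > \gamma$ forces $v(h - g) > \gamma$, and then that $v(g) = \gamma$, so that $g$ genuinely represents the same archimedean component as $h$. The hypothesis that $vG$ has no last element is used precisely to supply an element $\delta \in G$ lying strictly below $h$ in valuation; if $\gamma$ were the maximum of $vG$, density could not approximate $h$ finely enough within its own component and immediacy could fail (cf.\ the dichotomy in \Autoref{prop:denseeq}~(\ref{prop:denseeq:4})).
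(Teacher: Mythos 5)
Your proof is correct and is essentially the paper's argument: the paper verifies the standard criterion for immediacy (for every $a \in H\setminus\{0\}$ there is $b \in G$ with $v(a-b) > v(a)$) by exactly your key step, namely picking $c \in G^{>0}$ with $v(c) > v(a)$ via the no-last-element hypothesis and then using density on the interval $(a-c,\, a+c)$. The only difference is presentational: you unfold immediacy into surjectivity of the value-set map and of the archimedean component maps and re-derive that criterion inline, whereas the paper invokes it directly.
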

	
	\begin{proof}
		In order to show that $G \subseteq H$ is immediate, we need to show that for any $a \in H{ \setminus\{0\}}$ there exists $b \in G$ such that $v(a-b)> v(a)$. Note that the density of $G$ in $H$ implies $vG = vH$. Let $a \in H$ with $a>0$. Since $vG$ has no last element, there is some $c \in G^{>0}$ such that $v(c) > v(a)$. By density of $G$ in $H$, there is some $b \in G$ such that $a-c< b < a+c$. We obtain $v(a-b) \geq  v(c) > v(a)$, as required.
	\end{proof}
	
	\begin{corollary}\thlabel{cor:densimpimm}
		Let $G \subseteq \div{G}$ be a dense extension of ordered abelian groups such that $vG$ has no last element. Then the extension $G \subseteq \div{G}$ is immediate.
	\end{corollary}
	
	\begin{proof}
		Apply \Autoref{prop:densimpimm} to $H = \div{G}$.
	\end{proof}
	
	\Autoref{cor:densimpimm} does not hold in general in the case where $vG$ has a last element, as the following example will show.
	
	\begin{example}\thlabel{ex:nonarchdense}
		Let	$A = \setbr{\left.\frac{a}{2^n} \ \right| { a \in \Z, n\in \N_0}}$.
		Note that $A$ is dense in $\div{A} = \Q$. For $G = \Q \oplus A$ we have $\div{G} = \Q \oplus \Q$. Moreover, $G$ is dense in $\div{G}$, as $A$ is dense in $\Q$. However, the extension is not immediate, as the archimedean components do not coincide.
	\end{example}

	We  now consider the converse direction, that is, under what conditions an immediate extension of ordered abelian groups is dense.
	
	\begin{lemma}\thlabel{lem:denseimm}
		Let $G,A_1,A_2,\ldots$ be { non-trivial} ordered abelian groups such that $G\subseteq H = \H_{n \in \omega} A_n$ is immediate. Let $a \in \H_{n \in \omega} A_n$. Then there exists a sequence $(d_n)_{n \in \omega}$ in $G$ such that for any $k \in \omega$ and any $i \leq k$ we have $(d_0+\ldots+d_k)(i) = a(i)$.
	\end{lemma}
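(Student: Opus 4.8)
The plan is to build the sequence $(d_n)_{n\in\omega}$ recursively, so that the partial sums $s_k = d_0+\ldots+d_k$ correct the coordinates of $a$ one at a time, using immediacy to guarantee at each stage that the required correcting element actually lies in $G$. The structural fact I would record first is the dictionary between coordinates and archimedean components. Since the index set is $\omega$, every subset is well-ordered, so $H$ is the full product and addition is coordinatewise; moreover for $s\in H$ and $n\in\omega$ one has $H^n = \{s : s(i)=0 \text{ for all } i<n\}$ and $H_n = \{s : s(i)=0 \text{ for all } i\leq n\}$, so the natural isomorphism $B(H,n)=H^n/H_n \cong A_n$ is exactly $s+H_n \mapsto s(n)$. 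Because $G\subseteq H$ is immediate, $vG=\omega$ and the inclusion $B(G,n)\hookrightarrow B(H,n)$ is an isomorphism for every $n$. Concretely, this says: for every $n\in\omega$ and every $c\in A_n$ there is some $g\in G$ with $v(g)\geq n$ (equivalently $g(i)=0$ for all $i<n$) and $g(n)=c$.

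With this in hand I would carry out the recursion. For the base case, applying the observation above with $n=0$ and $c=a(0)$ yields $d_0\in G$ with $d_0(0)=a(0)$, so $s_0=d_0$ satisfies $s_0(0)=a(0)$. For the inductive step, suppose $d_0,\ldots,d_{k-1}\in G$ have been chosen with $s_{k-1}(i)=a(i)$ for all $i\leq k-1$. I seek $d_k\in G$ that leaves the first $k$ coordinates untouched and corrects the $k$-th, i.e.\ $d_k(i)=0$ for $i<k$ and $d_k(k)=a(k)-s_{k-1}(k)$. Applying the observation with $n=k$ and $c=a(k)-s_{k-1}(k)\in A_k$ produces exactly such a $d_k\in G$. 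Since addition is coordinatewise and $d_k$ vanishes below $k$, the new partial sum $s_k=s_{k-1}+d_k$ still satisfies $s_k(i)=a(i)$ for $i<k$, while $s_k(k)=s_{k-1}(k)+d_k(k)=a(k)$; hence $s_k(i)=a(i)$ for all $i\leq k$, completing the induction.

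The only delicate point — and the place where immediacy is genuinely used — is the existence of the correcting element $d_k\in G$ with value $\geq k$ and prescribed $k$-th coordinate. This is precisely the surjectivity of $B(G,k)\to B(H,k)=A_k$ from the definition of an immediate extension; without it the target value $a(k)-s_{k-1}(k)$ might not be attained by any element of $G$ of value $\geq k$. I therefore expect the main (though modest) obstacle to be cleanly setting up the coordinate-versus-archimedean-component dictionary and verifying that $G^k/G_k=A_k$ delivers an element of $G$ that is simultaneously supported on $\{k,k+1,\ldots\}$ and has the desired image $a(k)-s_{k-1}(k)$ in $A_k$. Once that dictionary is fixed, the recursion itself is routine.
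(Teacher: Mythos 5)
Your proof is correct and takes essentially the same approach as the paper: a recursion that corrects one coordinate at a time, with immediacy of $G\subseteq H$ supplying the correcting element $d_k\in G$ at each step. The only difference is cosmetic---the paper invokes immediacy in its approximation form (there is $d_{k+1}\in G$ with $v\bigl((a-d')-d_{k+1}\bigr)>v(a-d')$ for the running remainder $d'=d_0+\ldots+d_k$, which forces $d_{k+1}$ to vanish below $k+1$ and to have the prescribed $(k+1)$-st coordinate), whereas you invoke the equivalent surjectivity of the component embeddings $B(G,k)\to B(H,k)\cong A_k$.
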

	
	\begin{proof}
		{ If $a\in G$, then set $d_0=a$ and $d_j=0$ for $j\geq 1$.
		Otherwise, since} $G \subseteq H$ is immediate, there is some $d_0 \in G$ such that $v(a-d_0) > v(a)$. Then $d_0(0) = a_0$. Suppose that $d_0,\ldots,d_k \in G$ are already constructed such that $d' = d_0+\ldots+d_k$ satisfies $d'(i) = a(i)$ for $i\leq k$. Again, since $G \subseteq H$ is immediate, there is some $d_{k+1} \in G$ such that $v((a-d')-d_{k+1}) > v(a-d') \geq k+1$. Thus, $d_{k+1}(i) = (a-d')(i) = 0$ for $i\leq k$ and $d_{k+1}(k+1) = (a-d')(k+1)$. We obtain $(d_0+\ldots + d_{k+1})(i) = a(i)$ for $i\leq k$ and $(d_0+\ldots + d_{k+1})(k+1) = (d' + d_{k+1})(k+1) = a(k+1)$, as required.
	\end{proof}
	
	\begin{proposition}\thlabel{prop:denseimm}
		Let $G \subseteq H$ be an immediate extension of ordered abelian groups such that $H$ is densely ordered and $vH \subseteq \omega$. Then $G$ is dense in $H$.
	\end{proposition}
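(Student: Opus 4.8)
The plan is to verify the defining condition of density directly: given $a,b \in H$ with $a<b$, I must produce some $g \in G$ with $a < g < b$. The key difficulty is that a naive approximation of $a$ by elements of $G$ is two-sided and may land on or below $a$, so I first exploit that $H$ is densely ordered to interpose a point. Concretely, since $H$ is densely ordered there is $e \in H$ with $a < e < b$; set $d_1 = e-a$ and $d_2 = b - e$, both positive, let $\delta = \min(d_1,d_2)$, and put $p = v(\delta)$. Because $vH \subseteq \omega$, the value $p$ is a natural number. The whole problem now reduces to approximating the interior point $e$ by some $g \in G$ tightly enough that $|e - g| < \delta$; for such a $g$ one immediately obtains $a = e - d_1 < g < e + d_2 = b$.

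To produce this approximation I would invoke \Autoref{lem:denseimm}. By Hahn's embedding theorem $H$ embeds, preserving the natural valuation and each archimedean component, into the Hahn product $\H_{n \in \omega} A_n$ of its (non-trivial) archimedean components; here the index set may be taken to be $\omega$ precisely because $vH \subseteq \omega$. Since this embedding is immediate, the composite extension $G \subseteq \H_{n \in \omega} A_n$ is again immediate, so \Autoref{lem:denseimm} applies. Taking the distinguished element to be $e$, it yields a sequence $(d_n)_{n \in \omega}$ in $G$ whose partial sums $s_k = d_0 + \ldots + d_k$ agree with $e$ on an initial segment of coordinates growing with $k$; consequently $v(e - s_k)$ grows without bound, and for all sufficiently large $k$ we have $v(e - s_k) > v(\delta)$. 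By the definition of the natural valuation this gives $|e - s_k| < \delta$ (if instead $e = s_k$ for some $k$, then $e \in G$ and we are already done). Setting $g = s_k$ for such a $k$ completes the construction, since $|e - g| < \delta \leq \min(d_1,d_2)$ forces $a < g < b$.

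The step I expect to be the main obstacle is the quantitative approximation, and it is precisely here that the hypothesis $vH \subseteq \omega$ is essential: the error valuations must be driven above the prescribed natural number $p$ after finitely many improvement steps, which the order type $\omega$ guarantees. Equivalently, bypassing \Autoref{lem:denseimm}, one can iterate the immediate property directly: starting from $e_0 = e$ and choosing $g_i \in G$ with $v(e_i - g_i) > v(e_i)$ where $e_{i+1} = e_i - g_i$, one obtains a strictly increasing sequence of values $v(e_i)$ in the well-ordered set $vH \subseteq \omega$, so that $v(e_k) \geq k$ and already $g = g_0 + \ldots + g_p$ satisfies $v(e - g) > p$. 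This direct iteration has the advantage of covering uniformly the case where $vH$ is finite, in which the Hahn product is merely a finite lexicographic sum and \Autoref{lem:denseimm} does not literally apply; in that case one may alternatively note that, by \Autoref{prop:denseeq}, dense orderedness of $H$ forces the top archimedean component to be densely ordered, and then run the same approximation.
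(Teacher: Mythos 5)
Your proof is correct, but it takes a genuinely different route from the paper's. The paper's proof works inside a Hahn product: it embeds $H$ into $\H_{n\in\omega}B_n$ via the Hahn Embedding Theorem, applies \Autoref{lem:denseimm} to approximate the left endpoint $a$ coordinate-wise, and then performs a case analysis on whether $k=v(b-a)$ is the last element of $vG$ — using \Autoref{prop:denseeq} to get dense orderedness of the top archimedean component in one case, and adding an element of higher value in the other. The asymmetry (approximating the endpoint $a$, which can only be approached from one useful side) is exactly what forces that coordinate surgery. Your interposition trick removes this difficulty at the source: by choosing $e\in(a,b)$ and setting $\delta=\min(e-a,b-e)$, any approximation $g$ of $e$ with $v(e-g)>v(\delta)$ lands in $(a,b)$ regardless of sign, so no case analysis is needed. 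Your second, iterative argument is also the more robust of your two: it bypasses the Hahn Embedding Theorem and \Autoref{lem:denseimm} entirely, using only the defining approximation property of immediate extensions, iterated at most $v(\delta)+1$ times — finiteness being exactly what $vH\subseteq\omega$ guarantees — and it applies uniformly when $vH$ is finite or a proper subset of $\omega$, where the paper's lemma only applies after padding with trivial components (in mild tension with that lemma's non-triviality hypothesis). What the paper's approach buys in exchange is structural information: it shows explicitly where dense orderedness of $H$ is consumed (in the last archimedean component, via \Autoref{prop:denseeq}), whereas in your argument dense orderedness enters only once, to interpose $e$. Both proofs are complete; yours is the more elementary and, arguably, the cleaner one.
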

	
	\begin{proof}
		Let $\Gamma = vH$. By the Hahn Embedding Theorem (cf.\ \cite[page~14]{kuhlmann}), we may consider $H$ as a subgroup of the Hahn product $\H_{n \in \omega} B_n$, where we set $B_n = \{0\}$ for $n \notin \Gamma$. Note that $H \subseteq  \H_{n \in \omega} B_n$ is an immediate extension.
		Let $a,b \in H$ with $0< a < b$. 
		By \Autoref{lem:denseimm}, there exists a sequence $(d_n)_{n \in \omega}$ in $G$ such that for any $k \in \omega$ and any $i \leq k$ we have $(d_0+\ldots+d_k)(i) = a(i)$. Suppose that { $k=v(b-a)$} is the last element of $vG$. Then by \Autoref{prop:denseeq}~(\ref{prop:denseeq:4}), $B_k$ is densely ordered. Let $c \in G$ with $v(c) = k$ and $a(k) < c(k) < b(k)$. Then $d' = d_1+\ldots+d_{k-1} + c \in G$ and $a < d' < b$, as required. { Now suppose that $k$ is not the last element of $vG$.} Then let $c \in G$ with $v(c) = m > k$ and $c(m) > a(m)$. Then $d' = d_1+\ldots+d_{m-1} + c \in G$ and $a < d' < b$, as required. 
	\end{proof}
	
	\begin{corollary}\thlabel{cor:denseimm}
		Let $G$ be an ordered abelian group. Suppose that the extension $G \subseteq \div{G}$ is immediate and that $vG \subseteq \omega$. Then $G$ is dense in $\div{G}$.
	\end{corollary}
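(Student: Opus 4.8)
The plan is to obtain this as an immediate specialisation of \Autoref{prop:denseimm}, applied to the extension $G \subseteq H$ with $H = \div{G}$. For this I would verify the three hypotheses of that proposition for $\div{G}$ in place of $H$: immediacy of the extension, $v\div{G} \subseteq \omega$, and dense orderedness of $\div{G}$.

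The immediacy of $G \subseteq \div{G}$ is assumed. For the value set, I would invoke the fact recorded in \Autoref{sec:prelim} that passing to the divisible hull leaves the value set unchanged, i.e.\ $v\div{G} = vG$; together with the hypothesis $vG \subseteq \omega$ this yields $v\div{G} \subseteq \omega$. It remains to check that $\div{G}$ is densely ordered. This follows from divisibility: if $G = \{0\}$ the statement is vacuous, and otherwise, given $a < b$ in $\div{G}$, divisibility provides some $c$ with $2c = a+b$, and then $a < c < b$, so $\div{G}$ is dense in itself.

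With all three hypotheses in place, \Autoref{prop:denseimm} immediately gives that $G$ is dense in $\div{G}$. I expect no genuine obstacle here, as the corollary is a direct application; the only point warranting a line of justification is the density of the order on $\div{G}$, which is exactly the midpoint argument above.
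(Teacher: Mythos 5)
Your proof is correct and is exactly the paper's approach: the paper's proof consists of the single line ``Apply \Autoref{prop:denseimm} to $H = \div{G}$,'' and your verification of the three hypotheses (immediacy by assumption, $v\div{G} = vG \subseteq \omega$ from the preliminaries, and dense orderedness of $\div{G}$ via the midpoint argument) just makes explicit the routine checks the paper leaves implicit.
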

	
	\begin{proof}
		Apply \Autoref{prop:denseimm} to $H = \div{G}$.
	\end{proof}
	
	Note that without the condition $vG \subseteq \omega$ in \Autoref{cor:denseimm}, the conclusion that $G$ is dense in $\div{G}$ does not hold in general, as the following example shows.
	
	\begin{example}
		Let $H$ be the Hahn product ${\H}_{\gamma \in \omega + 1} \Q$. Let $H'$ be the Hahn sum $H' = \coprod_{\gamma \in \omega + 1} \Q \subseteq H$. Note that $H'\subseteq H$ is an immediate extension (cf.\ \cite[page~3]{kuhlmann}). It follows that for any ordered abelian groups $G_1$ and $G_2$ with $H'\subseteq G_1 \subseteq G_2 \subseteq H$, also the extension $G_1\subseteq G_2$ is immediate.
		
		Let $G\subseteq H$ be given by 
		$$
		G = H' + a\Z \text{, where }a = \sum_{\gamma \in \omega} \mathds{1}_\gamma.$$
		Now $\div{G} = H' + a\Q  \subseteq H$, and the extension $G \subseteq \div{G}$ is immediate. Let $c = \frac 1 2 a + \frac 1 3 \mathds{1}_\omega$ and $d = \frac 1 2 a + \frac 2 3 \mathds{1}_\omega$. Then $c,d \in \div{G}$ with $0<c<d$. However, there is no element in $G$ strictly between $c$ and $d$. Thus $G$ is not dense in $\div{G}$.
	\end{example}
	
	\begin{remark}\thlabel{rmk:denseimmgps}
		By \Autoref{cor:densimpimm} and \Autoref{cor:denseimm}, we obtain the following: Let $G$ be an ordered abelian group and $vG \cong \omega$. Then the extension $G\subseteq \div{G}$ is dense if and only if it is immediate.
	\end{remark}
	
	The final results of this section are on ordered abelian groups which are not dense in their divisible hull. These will be used in \Autoref{sec:lrdef} to compare $\Lr$- and $\Lor$-definability of henselian valuations with real closed residue field in almost real closed fields. We start by giving some useful examples of ordered abelian groups which are not regular and thus not dense in their divisible hull.
		
	\begin{example}\thlabel{ex:regnew}
		\begin{enumerate}[wide, labelwidth=!, labelindent=6pt]
			\item  \thlabel{ex:regnew:1}
			$\Z \oplus \Z$ is a discretely ordered group which is not regular.
			
			\item  \thlabel{ex:regnew:2} Let $A$ be as in \Autoref{ex:nonarchdense}. Consider $G = A \oplus A$. Since $A$ is densely ordered, so is $G$ by \Autoref{prop:denseeq}. Since there is no element in $G$ between $\left(\frac 1 3 , 0\right)$ and $\left(\frac 1 3, 1\right)$, which both lie in $\div{G} = \Q \oplus \Q$, it follows that $G$ is not dense in $\div{G}$ and thus not regular. However, $G$ has limit points in $\div{G}\setminus G$, e.g.\ $\left(0,\frac 1 3\right)$.
			
		\end{enumerate}
	\end{example} 
	
	\Autoref{ex:regnew}~(\ref{ex:regnew:2}) shows that it is possible for an ordered abelian group to have limit points in its divisible hull but not to be dense in it. We can go even further and construct a non-divisible densely ordered abelian group $G$ which does not have any limit points in $\div{G} \setminus G$; or, in other words, $G$ is closed in $\div{G}$. We conclude this section by proving results on non-divisible densely ordered abelian groups which are closed in their divisible hull.
	A natural example of these are non-divisible Hahn products.
	
	\begin{proposition}\thlabel{prop:hahnnoarch}
		Let $\Gamma \neq \emptyset$ be a totally ordered set without a last element, and for any $\gamma \in \Gamma$, let $A_\gamma \neq \{0\}$ be an archimedean ordered abelian group. Suppose that $G=\H_{\gamma \in \Gamma} A_\gamma$ is non-divisible. Then $G$ is closed in $\div{G}$.
	\end{proposition}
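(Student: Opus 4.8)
The plan is to use the topological characterisation recorded in \Autoref{sec:prelim}: $G$ is closed in $\div{G}$ if and only if $G$ has no limit point in $\div{G}\setminus G$. So I would fix an arbitrary $a \in \div{G}\setminus G$ and aim to produce an open interval around $a$ (in the order topology of $\div{G}$) that misses $G$ entirely. First I would set up coordinates: since $na \in G$ for some $n \in \N$ and $\supp(a) = \supp(na)$ is well-ordered, writing $a = \sum_{\gamma} a_\gamma \mathds{1}_\gamma$ exhibits $a$ as an element of $\H_{\gamma\in\Gamma}\div{A_\gamma}$ with each $a_\gamma \in \div{A_\gamma}$. Throughout I equip $G$, $\div{G}$ and this ambient Hahn product with the natural valuation $v$, which on a Hahn product is given by $s \mapsto \min\supp(s)$ (identifying the value set with $\Gamma$). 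Because $a \notin G = \H_{\gamma\in\Gamma}A_\gamma$, the set $S = \setbr{\gamma \in \supp(a) \mid a_\gamma \notin A_\gamma}$ is non-empty; being a subset of the well-ordered set $\supp(a)$, it has a least element $\gamma_0$.

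The key step is the claim that every $g \in G$ satisfies $v(g-a) \leq \gamma_0$. Indeed, if $v(g-a) > \gamma_0$ then $\gamma_0 \notin \supp(g-a)$, so $g(\gamma_0) = a_{\gamma_0}$; but $g(\gamma_0) \in A_{\gamma_0}$ whereas $a_{\gamma_0} \notin A_{\gamma_0}$ by the choice of $\gamma_0$, a contradiction. This is the conceptual heart of the argument: although $A_{\gamma_0}$ may well be dense in $\div{A_{\gamma_0}}$ (so that $g(\gamma_0)$ can approximate $a_{\gamma_0}$ arbitrarily closely \emph{inside the single component} $A_{\gamma_0}$), no element of $G$ can actually match $a$ at coordinate $\gamma_0$, and hence the difference $g-a$ always has valuation at most $\gamma_0$.

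Finally I would exploit the hypothesis that $\Gamma$ has no last element to choose $\gamma_1 \in \Gamma$ with $\gamma_1 > \gamma_0$, and set $\epsilon = \mathds{1}_{\gamma_1} \in G \subseteq \div{G}$, noting $\epsilon > 0$ and $v(\epsilon) = \gamma_1 > \gamma_0$. For any $g \in G$ the claim gives $v(g-a) \leq \gamma_0 < v(\epsilon)$, and since under the natural valuation a strictly smaller value corresponds to a strictly larger absolute value, this yields $|g-a| > \epsilon$. Thus no $g \in G$ lies in the interval $(a-\epsilon, a+\epsilon)$, which together with $a \notin G$ gives $(a-\epsilon,a+\epsilon)\cap G = \emptyset$. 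Hence $a$ is neither a left- nor a right-sided limit point of $G$, and as $a \in \div{G}\setminus G$ was arbitrary, $G$ is closed in $\div{G}$.

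I expect the only genuine obstacle to be the bookkeeping justifying that $\div{G}\subseteq \H_{\gamma\in\Gamma}\div{A_\gamma}$ coordinatewise and that $v = \min\supp$ on this hull; both are routine from the definition of the divisible hull and of the natural valuation. It is worth remarking that the non-divisibility hypothesis is used only to make the statement non-vacuous by ensuring $\div{G}\setminus G \neq \emptyset$; the argument above applies to every $a \in \div{G}\setminus G$ without further reference to it.
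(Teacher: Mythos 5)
Your proof is correct and is essentially the paper's own argument: both isolate the least coordinate $\gamma_0$ at which $a$ fails to lie in $A_{\gamma_0}$, observe that no element of $G$ can agree with $a$ at that coordinate, and use the absence of a last element of $\Gamma$ to pick $\gamma_1 > \gamma_0$ so that the interval $\left(a - \mathds{1}_{\gamma_1}, a + \mathds{1}_{\gamma_1}\right)$ misses $G$. The paper compresses this into two lines (choosing its $\gamma_0$ strictly above the least bad coordinate and asserting the interval avoids $G$), whereas you spell out the valuation-theoretic bookkeeping, but the content is identical.
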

	
	\begin{proof}
		Let $s \in \div{G}\setminus G$, and let $\gamma_0 \in \Gamma$ with $\gamma_0 > \min\{ \gamma \in  \Gamma \mid s_\gamma \notin A_\gamma\}$. Such $\gamma_0$ exists, as $\Gamma$ has no last element. Then the interval $(s-\mathds{1}_{\gamma_0}, s+\mathds{1}_{\gamma_0})$ does not contain any element of $G$. Hence, $s$ is not a limit point of $G$.
	\end{proof}

	\Autoref{prop:hahnnoarch} shows in particular that any non-divisible Hahn product whose value set has no last element is not dense in its divisible hull. Thus, by \Autoref{fact:regrobinson}~(\ref{fact:regrobinson:1}), any such Hahn product has no archimedean model. A similar result holds for ordered Hahn fields which are not real closed (see \Autoref{rmk:hahnfieldnoarch}).
	The final results of this section gives us sufficient conditions for an ordered abelian group to be closed in its divisible hull.
	
	\begin{proposition}\thlabel{prop:convdivlimit}
		Let $G$ be an ordered abelian group. Suppose that for every prime $p \in \N$, there is a $p$-divisible convex subgroup $G_p\neq \{0\}$ of $G$. Then $G$ is closed in $\div{G}$.
	\end{proposition}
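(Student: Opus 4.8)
The plan is to prove the contrapositive-free statement directly: I will show that no element of $\div{G}\setminus G$ is a limit point of $G$, which by the remarks in \Autoref{sec:prelim} is exactly the assertion that $\cl(G)=G$, i.e.\ that $G$ is closed in $\div{G}$. So I fix $a \in \div{G}\setminus G$ and assume, towards a contradiction, that $a$ is a limit point of $G$.

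First I would reduce to the case where the ``denominator'' of $a$ is prime. Since $a \in \div{G}$, the set $\setbr{n \in \N \mid na \in G}$ is non-empty; let $N$ be its least element. As $a \notin G$ we have $N \geq 2$, so I may fix a prime $p$ dividing $N$ and set $m = N/p$ and $a' = ma$. Then $pa' = Na \in G$, while $a' \notin G$ because $0 < m < N$ and $N$ was chosen minimal. Moreover $a'$ is again a limit point of $G$: multiplication by the positive integer $m$ is an order-preserving injection of $\div{G}$ that maps $G$ into $G$, and it therefore carries limit points of $G$ to limit points of $G$ (given any $\varepsilon>0$ in $\div{G}$, approximating $a$ to within $\varepsilon/m$ by elements of $G$ yields, after multiplication by $m$, approximations of $a'$ to within $\varepsilon$). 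Hence it suffices to derive a contradiction under the additional assumption that $pa \in G$ while $a \notin G$.

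The heart of the argument then invokes the hypothesis for this particular $p$. Fix a non-trivial $p$-divisible convex subgroup $G_p$ and an element $h \in G_p$ with $h>0$, and assume first that $a$ is a right-sided limit point (the left-sided case being symmetric). Choosing $b \in G$ with $a < b < a + h/p$, I set $c = pb - pa$. Then $c \in G$, since $pb \in G$ and $pa \in G$, and $0 < c < h$; by convexity of $G_p$ this forces $c \in G_p$, so $p$-divisibility of $G_p$ gives $c/p \in G_p \subseteq G$. But $c/p = b - a \notin G$: otherwise $a = b - (b-a) \in G$. This contradiction shows that $a$ cannot be a limit point, which completes the proof.

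I expect the only genuinely delicate point to be the reduction to prime denominator, where one must verify both that $a' = (N/p)a$ still lies outside $G$ (this is precisely where minimality of $N$ is used) and that limit-point-ness is preserved under multiplication by $m$; once $pa \in G$ has been arranged, the final contradiction is immediate from convexity together with $p$-divisibility. It is worth emphasising that, although the hypothesis furnishes a $p$-divisible convex subgroup for \emph{every} prime, the proof only ever uses it for the single prime dividing the denominator of $a$.
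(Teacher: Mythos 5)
Your proof is correct. Its core contradiction is the same mechanism as the paper's: if an element $b$ of $G$ lies within a distance (measured by an element $h$ of a non-trivial divisible convex subgroup) of the point of $\div{G}\setminus G$, then convexity plus divisibility pull the scaled difference back into $G$, forcing the point itself into $G$. Where you genuinely diverge is in the handling of composite denominators. The paper takes $a\in G$, $N\in\N$ with $\frac{a}{N}\in\div{G}\setminus G$, forms the intersection $H=\bigcap_{i=1}^{m}G_{p_i}$ over the primes $p_1,\ldots,p_m$ dividing $N$, and directly exhibits the interval $\left(\frac{a}{N}-h,\frac{a}{N}+h\right)$, $h\in H^{>0}$, as disjoint from $G$; this tacitly relies on the facts that convex subgroups of an ordered abelian group form a chain under inclusion (so the finite intersection is the smallest $G_{p_i}$, hence non-trivial) and that a convex subgroup of a $p$-divisible ordered abelian group is again $p$-divisible (so $H$ is $N$-divisible). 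You instead reduce to a prime denominator via minimality of $N$ together with your lemma that multiplication by a positive integer carries limit points of $G$ to limit points of $G$, after which a single $G_p$ suffices. Your route makes the use of the hypothesis sharper---only one prime is ever invoked, and the slightly delicate verification concerning the intersection disappears---at the price of the reduction lemma and an argument by contradiction; the paper's route is shorter and direct, producing for each point of $\div{G}\setminus G$ an explicit neighbourhood missing $G$, with no auxiliary lemma about limit points. Both are sound.
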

	
	\begin{proof}
		If $G$ is divisible, then the conclusion is trivial. Otherwise, let $a \in G$ and $N \in \N$ such that $\frac a N \in \div{G}\setminus G$. Let $N = p_1\ldots p_m$ be the prime factorisation of $N$. Consider the non-trivial $N$-divisible convex subgroup $H = \bigcap_{i=1}^m G_{p_i}$. Let $h \in H$ with $h>0$. Consider the interval $I=\brackets{\frac aN - h, \frac aN + h}$ in $\div{G}$. Assume that $ I \cap G \neq \emptyset$. Then for any $g \in I \cap G$ we have $v(Ng-a) = v\brackets{g-\frac aN} \geq v(h)$ and thus $Ng- a \in H$. Since $H$ is $N$-divisible, we obtain $g - \frac aN \in H$ and thus $\frac aN\in G$, a contradiction. This gives us  $I \cap G = \emptyset$, as required.
	\end{proof}
	
	\begin{corollary}\thlabel{cor:convdivlimit}
		Let $G$ be an ordered abelian group. Suppose that $G$ has a convex divisible subgroup $H \neq \{0\}$. Then $G$ is closed in $\div{G}$.
	\end{corollary}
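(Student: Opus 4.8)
The plan is to derive this directly from \Autoref{prop:convdivlimit}, which already does all the substantive work. The single observation needed is that a divisible ordered abelian group is in particular $p$-divisible for \emph{every} prime $p$. Thus the convex divisible subgroup $H \neq \{0\}$ supplied by the hypothesis simultaneously witnesses $p$-divisibility for all primes at once.

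Concretely, I would set $G_p = H$ for every prime $p \in \N$. Since $H$ is divisible, each $G_p = H$ is a $p$-divisible convex subgroup of $G$, and it is non-trivial by assumption. Hence the hypothesis of \Autoref{prop:convdivlimit} is satisfied, and I would invoke that proposition to conclude that $G$ is closed in $\div{G}$.

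There is no real obstacle here: the corollary is a strict specialisation of \Autoref{prop:convdivlimit} in which a single convex subgroup serves for all primes, rather than requiring a separate (possibly distinct) $p$-divisible convex subgroup for each prime. The entire content is the remark that divisibility implies $p$-divisibility for every $p$, so the proof reduces to one application of the preceding proposition.

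\begin{proof}
Since $H$ is divisible, it is $p$-divisible for every prime $p \in \N$. Setting $G_p = H$ for each prime $p$, the hypotheses of \Autoref{prop:convdivlimit} are satisfied, and we conclude that $G$ is closed in $\div{G}$.
\end{proof}
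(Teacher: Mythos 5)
Your proof is correct and coincides with the paper's own argument: the paper likewise sets $G_p = H$ for every prime $p$ and applies \Autoref{prop:convdivlimit}, using the observation that divisibility gives $p$-divisibility for all primes simultaneously.
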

	
	\begin{proof}
		For any prime $p$, let $G_p = H$. The conclusion follows from \Autoref{prop:convdivlimit}.
	\end{proof}
	
	The following example shows that the converse of \Autoref{prop:convdivlimit} does not hold.
	
	\begin{example}\thlabel{ex:conversedivclosed}
		Let $2=p_0<p_1<p_2<\ldots$ be a complete list of all prime numbers. For $n \in \N$, let $A_n$ be the following ordered abelian group
		$$A_n = \setbr{\left. \frac{a}{p_1^{m_1}\ldots p_n^{m_n}}  \ \right| { a\in \Z,m_1,\ldots,m_n \in \N_0 }}.$$
		Then $A_n$ is $p_i$-divisible for $i=1,\ldots,n$. 
		Let $G = \H_{n \in \N} A_n$. Note that $\div{G} = \H_{n \in \N} \Q$. Since $A_n$ is not $2$-divisible for any $n \in \N$, there is no non-trivial $2$-divisible convex subgroup of $G$. In particular $G$ is not divisible and thus closed in $\div{G}$ by 		
		\Autoref{prop:hahnnoarch}. Moreover, for any prime $p_i \neq 2$, the maximal $p_i$-divisible subgroup of $G$ is $ \H_{n \in \N_{\geq i}} A_n$.

	\end{example}

	\section{Density in Real Closure}\label{sec:density}
	
	We now turn to ordered fields which are dense in their real closure.
	
	\subsection{Model theoretic properties}\label{sec:modeltheorydensity}
	In this subsection, we study model theoretically the class of ordered fields which are dense in their real closure.
	At first, we change to a more general setting of complete theories which admit quantifier elimination. 
	For a structure $\mathcal{M}$ and a subset $A \subseteq M$, we denote the definable closure of $A$ in $\mathcal{M}$ by $\dcl(A;\mathcal{M})$.
	
	\begin{theorem}\thlabel{prop:omindenserec}
		Let $\L$ be a language expanding $\Log$, let $T\supseteq T_{\mathrm{doag}}$ be a complete $\L$-theory which admits quantifier elimination and let $\Sigma \subseteq T$ be a theory extending the theory of ordered abelian groups.
		Then there is a theory $\Sigma' \supseteq \Sigma$ such that for any $\MM' \models T$ and any $\MM \subseteq \MM'$ with $\dcl(M;\MM') = M'$ we have that $\MM \models \Sigma'$ if and only if $\MM \models \Sigma$ and $M$ is dense in $M'$. Moreover, if $\Sigma$ and $T$ are recursive, then we can also choose $\Sigma'$ to be recursive.
	\end{theorem}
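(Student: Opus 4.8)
The plan is to build $\Sigma'$ by adding to $\Sigma$ one \emph{density axiom} $D_{\varphi,\psi}$ for each pair of $\L$-formulas $\varphi(x,\ul y)$, $\psi(x,\ul z)$ in a single distinguished variable $x$. The guiding idea is that such a $\varphi$ defines a partial function $f_\varphi$ on every model of $T$, sending $\ul y$ to the unique $x$ with $\varphi(x,\ul y)$ whenever such $x$ exists; since $\dcl(M;\MM')=M'$, every element of $M'$ is of the form $f_\varphi(\ul m)$ for some $\varphi$ and some $\ul m\in M$. The axiom $D_{\varphi,\psi}$ should assert: whenever $f_\varphi(\ul y)$ and $f_\psi(\ul z)$ are both defined and $f_\varphi(\ul y)<f_\psi(\ul z)$, there is some $c$ strictly between them. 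To turn this into a genuine $\L$-sentence that behaves well between $\MM$ and $\MM'$, I would use quantifier elimination for $T$ to replace, by $T$-equivalent quantifier-free formulas, the definedness condition $\exists! x\,\varphi(x,\ul y)$ (call it $\delta_\varphi(\ul y)$), the comparison $\exists x\exists x'\,(\varphi(x,\ul y)\wedge\psi(x',\ul z)\wedge x<x')$ (call it $\eta(\ul y,\ul z)$), and the one-sided comparisons $\exists x\,(\varphi(x,\ul y)\wedge x<c)$ and $\exists x\,(\psi(x,\ul z)\wedge c<x)$ (call them $\lambda_\varphi(\ul y,c)$ and $\rho_\psi(\ul z,c)$). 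I then set $D_{\varphi,\psi}$ to be the sentence
$$\forall\ul y\,\forall\ul z\,\bigl[\,\delta_\varphi(\ul y)\wedge\delta_\psi(\ul z)\wedge\eta(\ul y,\ul z)\ \rightarrow\ \exists c\,(\lambda_\varphi(\ul y,c)\wedge\rho_\psi(\ul z,c))\,\bigr],$$
and put $\Sigma'=\Sigma\cup\{D_{\varphi,\psi}\mid \varphi,\psi\ \text{$\L$-formulas}\}$, which visibly satisfies $\Sigma'\supseteq\Sigma$.

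The decisive feature is that $\delta_\varphi,\eta,\lambda_\varphi,\rho_\psi$ are quantifier-free and therefore absolute between the substructure $\MM$ and the ambient model $\MM'$ for parameters drawn from $M$, while the outer quantifiers $\forall\ul y\,\forall\ul z$ and $\exists c$ in $D_{\varphi,\psi}$ range over $M$ when the sentence is evaluated in $\MM$. This is precisely what converts a statement about the values $f_\varphi(\ul m)\in M'$ into the assertion that a witness $c$ can be found \emph{inside} $M$; it is worth stressing that $\MM$ is only assumed to be a substructure of $\MM'$, not a model of $T$, so absoluteness of quantifier-free formulas is the only transfer tool available.

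For the equivalence I would argue both directions using this transfer. If $\MM\models\Sigma'$, then $\MM\models\Sigma$ is immediate; for density, take $a<b$ in $M'$, write $a=f_\varphi(\ul m)$ and $b=f_\psi(\ul n)$ with $\ul m,\ul n\in M$ via $\dcl(M;\MM')=M'$, note that $\delta_\varphi(\ul m)$, $\delta_\psi(\ul n)$, $\eta(\ul m,\ul n)$ hold in $\MM'$ and hence, being quantifier-free, in $\MM$, and read the required $c\in M$ off $D_{\varphi,\psi}$. Conversely, assume $\MM\models\Sigma$ and $M$ dense in $M'$; to verify a given $D_{\varphi,\psi}$ in $\MM$, transfer its quantifier-free hypotheses up to $\MM'$, where they state that $a=f_\varphi(\ul m)$ and $b=f_\psi(\ul n)$ are defined with $a<b$, invoke density to obtain $c\in M$ with $a<c<b$, and transfer the quantifier-free conclusions $\lambda_\varphi(\ul m,c)\wedge\rho_\psi(\ul n,c)$ back down to $\MM$.

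Finally, for the recursive clause I would use that a complete, recursively axiomatised theory is decidable; thus for each $\L$-formula $\chi(\ul y)$ one can effectively produce a quantifier-free $T$-equivalent by enumerating quantifier-free candidates $\chi'$ and testing whether $\forall\ul y\,(\chi\leftrightarrow\chi')\in T$, a search guaranteed to halt by quantifier elimination. Consequently $(\varphi,\psi)\mapsto D_{\varphi,\psi}$ is computable, $\Sigma'$ is recursively enumerable, and by Craig's trick it may be taken recursive. The step I expect to demand the most care is the systematic quantifier-elimination bookkeeping of the previous paragraph: arranging that definedness and both one-sided comparisons are captured by quantifier-free formulas whose truth is absolute between $\MM$ and $\MM'$, while the ranges of the surrounding quantifiers still see only $M$. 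Everything else is routine transfer of quantifier-free truth across the inclusion $\MM\subseteq\MM'$.
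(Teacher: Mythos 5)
Your proposal is correct and takes essentially the same route as the paper's proof: both use quantifier elimination to render the relevant order-comparisons of $\dcl$-named elements quantifier-free (hence absolute between the substructure $\MM$ and the ambient model $\MM' \models T$), package these into an axiom schema asserting that separating witnesses can be found inside $M$, and derive the recursiveness clause from the decidability of the complete, recursively axiomatised theory $T$. The differences are only in bookkeeping: the paper indexes its axioms by pairs of \emph{total} $0$-definable functions that disagree away from the zero tuple and asserts that their cuts over $M$ differ (which yields $\beta \leq b < \alpha$ and needs a small implicit step to get strict density), whereas you index by arbitrary formulas viewed as \emph{partial} function definitions guarded by quantifier-free definedness hypotheses and assert strict betweenness directly --- a variant that avoids the paper's zero-tuple manipulations and its weak-separation step, at the cost of the (correctly handled) Craig's-trick argument for recursiveness.
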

	
	\begin{proof}{ 
		For any $0$-definable function $f$ in $T$, 
		let $\varphi_f(\ul{x},z)$ be a quantifier-free formula which is equivalent to $z < f(\ul{x})$ in $T$. Let $A$ be the set of all pairs $(f,g)$ of $0$-definable functions in $T$ with the same arity such that $$T \models \forall \ul{x} \ (f(\ul{x}) = g(\ul{x}) \to \ul{x} = 0).$$ In other words, $A$ consists of all pairs of $0$-definable functions which are distinct everywhere except possibly in $0$.
		Set 
		$$\Sigma' = \Sigma \cup \setbr{\forall \ul{x}\ (\ul{x} \neq 0 \to \neg \forall z\  (\varphi_f(\ul{x},z) \leftrightarrow \varphi_g(\ul{x},z) )) \mid (f,g) \in A}.$$ Note that if $\Sigma$ and $T$ are recursive, then so is $\Sigma'$, as it is decidable whether a given $\L$-formula defines a function. 
		
		Let $\MM' \models T$ and let $\MM \subseteq \MM'$ with $\dcl(M;\MM') = M'$. Suppose that $\MM \models \Sigma'$. We need to show that $M$ is dense in $M'$. Let $\alpha, \beta \in M'$ such that $\alpha \neq  \beta$. Let $f$ and $g$ be $0$-definable functions and let $\ul{a} \in M$ such that $f(\ul{a}) = \alpha$ and $g(\ul{a}) = \beta$. Since also $\dcl(M\setminus\setbr{0};\MM') = M'$, we may assume that $\ul{a} \neq 0$. Moreover, we may assume that $(f,g) \in A$, as otherwise we can replace $g$ by the $0$-definable function $$\ul{x}\mapsto\begin{cases}g(\ul{x})&\text{ if } f(\ul{x}) \neq g(\ul{x}) \text{ or } \ul{x}=0,\\|\ul{x}|+|f(\ul{x})|&\text{ otherwise.}\end{cases}$$ Hence, $$\MM \models \neg \forall z\  (\varphi_f(\ul{a},z) \leftrightarrow \varphi_g(\ul{a},z) ).$$ This implies that for some $b \in M$ we have 
		$$\MM \models \varphi_f(\ul{a},b) \wedge \neg \varphi_g(\ul{a},b)\text{ or }\MM \models \neg \varphi_f(\ul{a},b) \wedge \varphi_g(\ul{a},b).$$ Since $\varphi_f$ and $\varphi_g$ are quantifier-free, we obtain that either $$\MM' \models \varphi_f(\ul{a},b) \wedge \neg \varphi_g(\ul{a},b)\text{ or }\MM' \models \neg \varphi_f(\ul{a},b) \wedge \varphi_g(\ul{a},b).$$ Hence, either $$\beta = g(\ul{a}) \leq b < f(\ul{a}) = \alpha\text{ or }\alpha = f(\ul{a}) \leq b < g(\ul{a}) = \beta.$$ This implies that $M$ is dense in $M'$.
		
		Conversely, suppose that $\MM \models \Sigma$ and $M$ is dense in $M'$. Let $(f,g) \in A$ and $\ul{a} \in M$ with $\ul{a} \neq 0$. We need to show that $$\MM \models \neg \forall z\  (\varphi_f(\ul{a},z) \leftrightarrow \varphi_g(\ul{a},z) ).$$ Let $\alpha = f(\ul{a})$ and $\beta = g(\ul{a})$. By assumption on $A$, we have $\alpha \neq \beta$, say $\alpha < \beta$. Since $M$ is dense in $M'$, there is $c \in M$ such that $\alpha < c < \beta$. Hence, $\MM' \models \neg \varphi_f(\ul{a},c)$ but $\MM' \models \varphi_g(\ul{a},c)$. Since $\varphi_f$ and $\varphi_g$ are quantifier-free, we obtain $\MM \models \neg \varphi_f(\ul{a},c)$ and  $\MM \models \varphi_g(\ul{a},c)$, as required.}
	\end{proof}

	\begin{corollary}\thlabel{lem:densedc}
		Let $\L$ be a language expanding $\Log$ and let $\MM=(M,+,0, \linebreak  <,\ldots)$ and $\NN=(N,+,0,<,\ldots)$ be ordered $\L$-structures such that $(M,+,0,  <)$ is a non-trivial ordered abelian group and $\MM \equiv \NN$. Suppose that there exists a complete $\L$-theory $T\supseteq T_{\mathrm{doag}}$ admitting quantifier elimination such that there are $\MM',\NN' \models T$ with $\MM\subseteq \MM'$, $\NN\subseteq \NN'$, $\dcl(M;\MM')=M'$ and $\dcl(N;\NN')=N'$. Suppose further that $M$ is dense $M'$. Then $N$ is also dense in $N'$.
	\end{corollary}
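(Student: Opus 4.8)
The plan is to reduce the entire statement to \Autoref{prop:omindenserec}, using elementary equivalence to transport the density property from the pair $\MM \subseteq \MM'$ to the pair $\NN \subseteq \NN'$. The point is that \Autoref{prop:omindenserec} turns ``$M$ is dense in $M'$'' into satisfaction of a \emph{fixed} $\L$-theory $\Sigma'$, and satisfaction of a theory is preserved under $\equiv$.

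First I would apply \Autoref{prop:omindenserec} with $\Sigma$ taken to be the theory of ordered abelian groups, regarded as an $\Log$-theory inside $\L$. Since $T \supseteq \Tdoag$ and $\Tdoag$ already contains the axioms of ordered abelian groups, we have $\Sigma \subseteq T$, so the hypotheses of \Autoref{prop:omindenserec} are met. This produces a theory $\Sigma' \supseteq \Sigma$ with the stated characterisation. The crucial observation—the ``main obstacle'' such as it is—is that $\Sigma'$ is a single set of $\L$-sentences determined by $T$ and $\Sigma$ alone, independently of any particular model; this is clear from its construction (it is built from the set of pairs of $0$-definable functions of $T$ and the quantifier-free formulas $\varphi_f$), and it is exactly what allows the \emph{same} $\Sigma'$ to be used simultaneously for $\MM \subseteq \MM'$ and for $\NN \subseteq \NN'$.

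Next I would feed $\MM$ into this characterisation. By hypothesis $\MM \subseteq \MM' \models T$ and $\dcl(M;\MM') = M'$; moreover $(M,+,0,<)$ is a non-trivial ordered abelian group, so $\MM \models \Sigma$, and by assumption $M$ is dense in $M'$. Hence \Autoref{prop:omindenserec} gives $\MM \models \Sigma'$. Since $\Sigma'$ consists of $\L$-sentences and $\MM \equiv \NN$, we conclude $\NN \models \Sigma'$.

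Finally I would run \Autoref{prop:omindenserec} in the other direction for $\NN$. Because $\NN \subseteq \NN' \models T$ and $\dcl(N;\NN') = N'$, the equivalence in \Autoref{prop:omindenserec} applied to $\NN$ states that $\NN \models \Sigma'$ holds if and only if $\NN \models \Sigma$ and $N$ is dense in $N'$. Thus from $\NN \models \Sigma'$ we read off both that $\NN \models \Sigma$ (which also confirms $N$ is an ordered abelian group) and, as desired, that $N$ is dense in $N'$. The only items needing a word of care are the side hypotheses $\MM \models \Sigma$ and the reusability of $\Sigma'$; both are immediate once one notes that $\Sigma'$ depends only on $T$ and $\Sigma$ and that $\MM,\NN$ are ordered abelian groups.
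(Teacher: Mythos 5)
Your proposal is correct and follows essentially the same route as the paper: both invoke \Autoref{prop:omindenserec} once to obtain a single theory $\Sigma'$ whose satisfaction characterises density, deduce $\MM \models \Sigma'$, transfer this to $\NN$ via $\MM \equiv \NN$, and read off the density of $N$ in $N'$ from the biconditional applied to $\NN \subseteq \NN'$. The only difference is the instantiation of $\Sigma$: the paper takes $\Sigma = \Th(\MM)$, while you take the theory of ordered abelian groups, which if anything conforms more literally to the hypothesis $\Sigma \subseteq T$ of \Autoref{prop:omindenserec}.
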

	
	\begin{proof}
		By applying \Autoref{prop:omindenserec} to $\Sigma = \Th(\MM)$, we obtain a theory $\Sigma'$ such that $\MM \models \Sigma'$. Moreover, since $\MM \equiv \NN$, we obtain $\NN\models \Sigma'$. Hence, $N$ is dense in $N'$.
	\end{proof}

	\Autoref{prop:omindenserec} is applicable to both the $\Log$-theory of divisible ordered abelian groups $\Tdoag$ and the $\Lor$-theory of real closed fields $\Trcf$. Note that by o-minimality of $T_{\mathrm{doag}}$, for any extension of ordered abelian groups $G \subseteq H$ where $H$ is divisible, we have $\dcl(G;H) = \div{G}$. Similarly, for any extension of ordered fields $(K,<) \subseteq (F,<)$ such that $F$ is real closed, we have $\dcl(K;(F,<))=\rc{K}$.
	
	\begin{corollary}\thlabel{cor:denserec}
		\begin{enumerate}[wide, labelwidth=!, labelindent=6pt]
			\item  		There is a recursive $\Log$-theory which axiomatises the class of non-trivial ordered abelian groups which are dense in their divisible hull.
			
			\item\label{cor:denserec:2}  		There is a recursive $\Lor$-theory which axiomatises the class of ordered fields which are dense in their real closure.
		\end{enumerate}
	\end{corollary}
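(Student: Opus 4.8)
The plan is to obtain both statements as immediate specialisations of \Autoref{prop:omindenserec}, applied to the two concrete theories highlighted in the paragraph preceding the corollary, namely $\Tdoag$ in the language $\Log$ and $\Trcf$ in the language $\Lor$. In each case the ``ambient'' model $\MM'$ playing the role required by \Autoref{prop:omindenserec} will be the divisible hull (respectively the real closure), and the crucial input is that, by o-minimality, the definable closure of the smaller structure inside this ambient model is exactly the divisible hull (respectively the real closure).

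For part~(1), I would set $\L = \Log$, take $T = \Tdoag$, and let $\Sigma$ be the (recursive) set of axioms for non-trivial ordered abelian groups. Here $T$ is complete, admits quantifier elimination and is recursive, while $\Sigma$ is recursive, extends the theory of ordered abelian groups, and is contained in $T$; thus all hypotheses of \Autoref{prop:omindenserec} are met and the theorem produces a recursive $\Log$-theory $\Sigma' \supseteq \Sigma$. To see that $\Sigma'$ axiomatises the desired class, let $G$ be any model of $\Sigma'$, so that $G$ is in particular a non-trivial ordered abelian group; setting $\MM = G$ and $\MM' = \div{G}$, we have $\MM' \models \Tdoag$ and $\dcl(G;\div{G}) = \div{G}$, so \Autoref{prop:omindenserec} gives that $G$ is dense in $\div{G}$. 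Conversely, any non-trivial ordered abelian group dense in its divisible hull satisfies $\Sigma$ and, by the same embedding into $\div{G}$, satisfies $\Sigma'$.

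Part~(2) proceeds in complete analogy: take $\L = \Lor$, $T = \Trcf$, and $\Sigma$ the (recursive) theory of ordered fields. One first records that $\Lor$ expands $\Log$ and that $\Trcf \supseteq \Tdoag$, so the format of \Autoref{prop:omindenserec} applies; since $\Trcf$ is complete, recursive and admits quantifier elimination, and $\Sigma$ is recursive, extends the theory of ordered abelian groups and is contained in $\Trcf$, the theorem yields a recursive $\Lor$-theory $\Sigma'$. For an arbitrary model $(K,<)$ of $\Sigma'$ one takes $\MM' = (\rc{K},<)$, uses $\dcl(K;(\rc{K},<)) = \rc{K}$, and concludes exactly as before that $(K,<) \models \Sigma'$ if and only if $K$ is dense in $\rc{K}$.

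The only points that need genuine care --- and where I expect the ``work'' to lie --- are the verifications of the hypotheses of \Autoref{prop:omindenserec} for the two target theories: completeness, quantifier elimination and recursiveness of $\Tdoag$ and $\Trcf$ (all classical), and, most importantly, the identifications $\dcl(G;\div{G}) = \div{G}$ and $\dcl(K;(\rc{K},<)) = \rc{K}$. These definable-closure identities are precisely what allows the divisible hull and the real closure to serve as the ambient model $\MM'$, and they follow from the o-minimality of $\Tdoag$ and $\Trcf$; both are already noted in the paragraph immediately preceding the corollary, so the proof itself reduces to invoking \Autoref{prop:omindenserec} twice.
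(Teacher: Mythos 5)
Your proposal is correct and follows essentially the same route as the paper: the paper's proof likewise applies \Autoref{prop:omindenserec} with $T = \Tdoag$ (respectively $T = \Trcf$) and $\Sigma$ the recursive theory of non-trivial ordered abelian groups (respectively of ordered fields), using the identifications $\dcl(G;\div{G}) = \div{G}$ and $\dcl(K;(\rc{K},<)) = \rc{K}$ recorded just before the corollary. Your write-up merely spells out in more detail the two directions of the ``axiomatises'' claim, which the paper leaves implicit.
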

	
	\begin{proof}
		\begin{enumerate}[wide, labelwidth=!, labelindent=6pt]
			\item\label{cor:denserec:pf:1}  	Let $\Sigma$ be the recursive $\Log$-theory of non-trivial ordered abelian groups. By applying \Autoref{prop:omindenserec} to $T = T_{\mathrm{doag}}$, we obtain a recursive $\Log$-theory $\Sigma'$ such that for any non-trivial divisible ordered abelian group $H$ and for any subgroup $G \subseteq H$ with $\div{G} = \dcl(G;H) = H$ we have that $G \models \Sigma'$ if and only if $G$ is dense in $H = \div{G}$. Hence, $\Sigma'$ axiomatises the class of non-trivial ordered abelian groups which are dense in their divisible hull.
			
			\item  	We can apply \Autoref{prop:omindenserec} to $T = \Trcf$ and then argue as in (\ref{cor:denserec:pf:1}) to obtain a recursive axiomatisation of the class of ordered fields which are dense in their real closure.
		\end{enumerate}
	\end{proof}
	
	\begin{remark}
		We have already shown in \Autoref{cor:regaxiom} that there exists a recursive axiomatisation of non-trivial ordered abelian groups which are dense in their divisible hull and stated the axiomatisation explicitly. An explicit axiomatisation of ordered fields which are dense in their real closure is given in \cite[Satz~13]{hauschild} as wells as \cite[Theorem~5]{mckenna} as the theory of ordered fields $T_{\mathrm{of}}$ plus the following axioms (one for each $n \in \N$):
		\begin{align*}&\forall c_0,\ldots,c_n,a,b,\varepsilon\\
		&[(a<b\wedge 0 < \varepsilon \wedge F(a)F(b)<0) \to \exists z \ (a<z<b \wedge |F(z)|<\varepsilon)],
		\end{align*}
		 where $F(x)=c_nx^n + \ldots + c_0$.
		We point out that \Autoref{prop:omindenserec} is applicable in a wider context than ordered abelian groups and ordered fields, as its proof gives a general procedure to obtain recursive axiomatiations of ordered structures which are dense in certain definable closures.
	\end{remark}
	
	We now apply the results above to the o-minimal complete theory of real closed fields $\Trcf$.
	
	\begin{corollary}	\thlabel{thm:densitytransfers}	
		Let $(K,<)$ and $(L,<)$ be ordered fields such that $(K,<) \equiv (L,<)$. Suppose that $K$ is dense in $\rc{K}$. Then $L$ is dense in $\rc{L}$.
	\end{corollary}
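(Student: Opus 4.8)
The plan is to deduce this directly from \Autoref{lem:densedc}, applied with $\L = \Lor$ and $T = \Trcf$. Concretely, I would instantiate $\MM = (K,<)$ and $\NN = (L,<)$, and take as the ambient models of $T$ the real closures $\MM' = (\rc{K},<)$ and $\NN' = (\rc{L},<)$.

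First I would check that the hypotheses of \Autoref{lem:densedc} are satisfied. The language $\Lor = \Lr \cup \{<\}$ expands $\Log$, and the additive reduct $(K,+,0,<)$ of any ordered field is a non-trivial ordered abelian group, so the structural requirements hold; the elementary equivalence $(K,<) \equiv (L,<)$ is assumed. The theory $\Trcf$ is a complete $\Lor$-theory admitting quantifier elimination by the classical theorem of Tarski, and since every real closed field is in particular a divisible ordered abelian group we have $\Trcf \supseteq \Tdoag$ in the required sense.

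Next I would supply the ambient models. The real closures $(\rc{K},<)$ and $(\rc{L},<)$ are models of $\Trcf$ containing $(K,<)$ and $(L,<)$ respectively. The definable-closure conditions $\dcl(K;(\rc{K},<)) = \rc{K}$ and $\dcl(L;(\rc{L},<)) = \rc{L}$ follow from the o-minimality of $\Trcf$, exactly as recorded in the discussion preceding \Autoref{cor:denserec}: every element of the real closure is algebraic and hence, by o-minimality, definable over the base field. Since $K$ is dense in $\rc{K}$ by assumption, all hypotheses of \Autoref{lem:densedc} hold, and the conclusion yields that $L$ is dense in $\rc{L}$.

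I do not expect a genuine obstacle: the statement is a clean specialisation of \Autoref{lem:densedc}. The only two points that merit a word of justification are the standard quantifier elimination for $\Trcf$ and the identity $\dcl(K;(\rc{K},<)) = \rc{K}$, both of which are already available in the excerpt.
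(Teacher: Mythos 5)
Your proof is correct and is essentially the same argument as the paper's: the paper deduces the statement immediately from \Autoref{cor:denserec}~(\ref{cor:denserec:2}) (axiomatisability of density in the real closure), whereas you invoke the sibling result \Autoref{lem:densedc} with $T = \Trcf$, $\MM' = (\rc{K},<)$, $\NN' = (\rc{L},<)$ — both being immediate corollaries of \Autoref{prop:omindenserec}. The hypotheses you verify (quantifier elimination for $\Trcf$, $\Trcf \supseteq \Tdoag$, and $\dcl(K;(\rc{K},<)) = \rc{K}$ by o-minimality) are exactly the facts the paper itself records before \Autoref{cor:denserec}, so the two proofs coincide in substance.
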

	
	\begin{proof}
		This follows immediately from \Autoref{cor:denserec}~(\ref{cor:denserec:2}).
	\end{proof}

	Let $(K,<)$ be an archimedean ordered field. Then $\Q \subseteq K \subseteq \rc{K} \subseteq \R$. Since $\Q$ is dense in $\R$, also $K$ is dense in $\rc{K}$. In other words, any archimedean ordered field is dense in its real closure. We thus obtain the following.
	
	\begin{corollary}\thlabel{thm:densitytransfers2}	
		Let $(K,<)$ be an ordered field which has an ar\-chi\-me\-de\-an model. Then $K$ is dense in $\rc{K}$.
	\end{corollary}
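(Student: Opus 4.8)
The plan is to derive this as an immediate consequence of the transfer principle \Autoref{thm:densitytransfers} together with the elementary fact, recorded in the paragraph preceding the statement, that every archimedean ordered field is dense in its real closure. The only care needed is to apply the transfer result in the correct direction.

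First I would unwind the hypothesis. By definition, saying that $(K,<)$ has an archimedean model means that there is an archimedean ordered field $(L,<)$ with $(K,<) \equiv (L,<)$. So I immediately have an elementarily equivalent companion $(L,<)$ which is archimedean.

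Next I would observe that $(L,<)$, being archimedean, is dense in its own real closure. Indeed, an archimedean ordered field embeds order-preservingly into $\R$, and the elements of $\R$ algebraic over $L$ form a real closed field, so I may identify $\rc{L}$ with a subfield of $\R$ containing $L$; since $\Q \subseteq L \subseteq \rc{L} \subseteq \R$ and $\Q$ is dense in $\R$, the field $L$ is dense in $\rc{L}$. This is exactly the observation stated just before the corollary, applied to $L$ in place of the archimedean field there.

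Finally I would invoke \Autoref{thm:densitytransfers} with the roles of the two fields interchanged: feeding in the elementary equivalence $(L,<) \equiv (K,<)$ and the density of $L$ in $\rc{L}$ yields that $K$ is dense in $\rc{K}$, which is the assertion. There is essentially no genuine obstacle; the statement is a corollary in the literal sense, and the one point demanding attention is directional. Since \Autoref{thm:densitytransfers} takes a field already known to be dense in its real closure and transfers this property to an elementarily equivalent field, I must supply the archimedean model $L$ as the field whose density is established and obtain $K$ as the conclusion, rather than the other way around.
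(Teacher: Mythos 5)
Your proposal is correct and is essentially identical to the paper's proof: the paper likewise combines the observation that archimedean ordered fields satisfy $\Q \subseteq K \subseteq \rc{K} \subseteq \R$ (hence are dense in their real closure) with \Autoref{thm:densitytransfers} applied to the archimedean model. Your extra care about the direction of the transfer is sound, since elementary equivalence is symmetric, and matches how the paper uses the result.
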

	
	\begin{proof}
		Since any archimedean ordered field is dense in its real closure, we obtain from \Autoref{thm:densitytransfers} that $K$ is dense in $\rc{K}$.
	\end{proof}

	By \Autoref{thm:densitytransfers2}, any ordered field with an archimedean model is dense in its real closure. We will use \Autoref{cor:denserec} to show that the converse does not hold, i.e.\ that there exists an ordered field which is dense in its real closure but has no archimedean model.\footnote{ In this regard, we thank Erik Walsberg for pointing out a related question.}
	Let $(K,<)$ be an ordered field. An \textbf{integer part} of $K$ is a discretely ordered subring $(Z,<) \subseteq (K,<)$ with $1$ as least positive element such that for any $x \in K$ there exists $z\in Z$ with $z \leq x < z+1$. The connections between integer parts and density in real closure are further explored in \Autoref{sec:ip}.\label{page:ip}
	
	\begin{proposition}\thlabel{prop:ordfieldnoarchmodel}
		There is an ordered field $(K,<)$ such that $K$ is dense in $\rc{K}$ but $(K,<)$ has no archimedean model. Moreover, $(K,<)$ does not admit a non-trivial $\Lor$-definable convex valuation.
	\end{proposition}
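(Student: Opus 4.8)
The plan is to isolate the roles of the three requirements, reduce each to a condition on the natural valuation $\vnat$, and then build a single field meeting all of them, with the non-existence of an archimedean model as the decisive point.

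First I would translate the three conditions into statements about $\vnat$. Since any dense extension of ordered fields is immediate, if $K$ is dense in $\rc{K}$ then $\vnat K = \vnat(\rc{K})$ is divisible and $K\vnat = (\rc{K})\vnat$ is real closed (and archimedean, as residue fields of natural valuations always are). Conversely, choosing $K$ with divisible $\vnat K$ and real closed $K\vnat$ makes $K \subseteq \rc{K}$ immediate, hence dense. So I would look for $K$ with divisible natural value group and real closed residue field, and non-henselian, so that $K \neq \rc{K}$ (this rules out the cheap archimedean model $\R$ that would come from real closedness). Taking moreover $\vnat K$ archimedean as an ordered group, say $\vnat K \cong \Q$, is convenient for requirement (b): the nontrivial convex valuations of $K$ are exactly the coarsenings of $\vnat$, which correspond to the convex subgroups of $\vnat K$; as $\Q$ has no proper nontrivial convex subgroup, the only nontrivial convex valuation of $K$ is $\vnat$ itself. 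Thus (b) collapses to the single statement that $\vnat$ is not $\Lor$-definable.

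For the absence of an archimedean model I would use a clean model-theoretic criterion. An archimedean model of $\Th(K,<)$ is, after passing to a countable elementary submodel, a countable model of $\Th(K,<)$ omitting the type $p_\infty(x) = \{\, x > \underline{n} : n \in \N \,\}$ of an infinitely large element; conversely any model omitting $p_\infty$ is archimedean. By the Omitting Types Theorem such a model exists precisely when $p_\infty$ is non-isolated. Hence $(K,<)$ has \emph{no} archimedean model if and only if $p_\infty$ is isolated, i.e.\ if and only if there is a nonempty $\emptyset$-definable set $S \subseteq K$ all of whose elements exceed every integer. The whole task therefore reduces to producing such a definable family $S$ of infinitely large elements.

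The main obstacle is to manufacture $S$ while keeping $\vnat$ undefinable; these requirements pull against each other. No single element can be $\emptyset$-definable and infinite, since the definable closure of $\emptyset$ is $\rc{\Q}$, which is archimedean, so $S$ must be infinite; and one must avoid the cheap ways of obtaining $S$ that also define the valuation. Indeed the convex hull of any $\emptyset$-definable bounded set meeting every integer is exactly $\OO_{\vnat}$, which would render $\vnat$ definable; symmetrically, a field that is too homogeneous — for instance $\rc{\Q}(t^{\Q})$, which admits the value-group rescalings $t^{q} \mapsto t^{cq}$ as $\Lor$-automorphisms — has no $\emptyset$-definable set of infinite elements at all. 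The construction must therefore be a suitably rigid, non-henselian dense subfield of a real closed Hahn field such as $\rc{\Q}((t^{\Q}))$ (or a transseries-type field), engineered so that a definable family $S$ of infinite elements exists but lies strictly above the finite part, so that its convex hull properly contains $\OO_{\vnat}$ and cannot recover it. Threading this needle is the crux of the argument. Finally, with $S$ in hand I would establish (b), that $\vnat$ is not $\Lor$-definable, exploiting that $\vnat K$ is divisible (hence dense in its divisible hull) and $K\vnat$ is real closed (hence dense in its real closure), so that both value group and residue field are maximally dense; together with the homogeneity of the finite part and the fact that $S$ sits strictly above $\OO_{\vnat}$, this should force that no $\Lor$-formula cuts out $\OO_{\vnat}$. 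Since $\vnat$ is the only candidate nontrivial convex valuation, assembling the three parts then yields the proposition.
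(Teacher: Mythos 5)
Your reductions are essentially sound, but they stop exactly where the proposition begins. Granting (i) that with $\vnat K \cong \Q$ and $K\vnat$ real closed the extension $K \subseteq \rc{K}$ is dense (this needs \Autoref{fact:immimpdense} and genuinely needs $\vnat K$ archimedean: your blanket claim ``immediate, hence dense'' is false in general, cf.\ \cite[Remark~3.5]{biljakovic}), (ii) that $\vnat$ is then the only non-trivial convex valuation, and (iii) that, by omitting types, ``no archimedean model'' is equivalent to the existence of a nonempty $\emptyset$-definable set $S$ of infinitely large elements, everything now rests on exhibiting one field in which such an $S$ exists while $\OO_{\vnat}$ is not $\Lor$-definable. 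You acknowledge that this is ``the crux'' and then do not do it: no candidate field is specified, and the closing claim that density of value group and residue field ``should force'' undefinability of $\vnat$ is not an argument. The tension you name is real and quantifiable: from any such $S$ one forms the $\emptyset$-definable convex set $D = \setbr{x \in K \mid 0 < x \wedge \forall s\,(s \in S \to x < s)}$ and the $\emptyset$-definable convex subgroup $A = \setbr{x \in K \mid |x| + D \subseteq D}$ of $(K,+)$; whenever $A \neq \{0\}$, the set $\setbr{x \in K \mid xA \subseteq A}$ is a non-trivial $\Lor$-definable convex valuation ring (this is exactly the mechanism of \Autoref{constr:val}). So your field must be engineered so that this collapses to $A=\{0\}$, and likewise for every other definable attempt to extract a valuation --- an existence statement that is precisely the content of the proposition and is nowhere established in the proposal.

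The paper's proof supplies the missing idea by a completely different, computability-theoretic route that never constructs a field explicitly. Take an $\Lor$-formula $\varphi(x)$ defining $\Z$ in $(\Q,<)$ \cite{robinson2}, and let $\Sigma$ be the deductive closure of: the recursive axiomatisation of density in the real closure (\Autoref{cor:denserec}), axioms stating that $\varphi$ defines an integer part, and the scheme stating that no $\Lor$-formula $\psi(x)$ defines a non-trivial convex valuation ring. Then $\Sigma$ is recursive and $(\Q,<) \models \Sigma$. If the relativisation $\widetilde{\alpha}$ to $\varphi$ of every sentence $\alpha$ true in $(\Z,+,-,\cdot,0,1,<)$ were provable from $\Sigma$, then $\Th(\Z,+,-,\cdot,0,1,<)$ would be decidable; hence there is $\sigma$ with $\neg\sigma$ true in $\Z$ but $\widetilde{\sigma}$ consistent with $\Sigma$. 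Any model $(K,<)$ of $\Sigma \cup \setbr{\widetilde{\sigma}}$ then works: density in $\rc{K}$ and non-existence of definable convex valuations hold because they are axioms, and no model of $\Th(K,<)$ can be archimedean, since in an archimedean model $\varphi$ would define the integer part $\Z$, contradicting $\widetilde{\sigma}$. Your omitting-types criterion is compatible with this outcome (the type of an infinite element is indeed isolated in the resulting theory), but the existence proof runs through undecidability of arithmetic rather than through any explicit valuation-theoretic construction, and that is the step your proposal leaves open.
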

	
	\begin{proof}
		Let $\varphi(x)$ be an $\Lor$-formula defining $\Z$ in $(\Q,<)$ (cf.\ \cite[The\-o\-rem~3.1]{robinson2}). Let $\Sigma_1$ be the $\Lor$-theory $\Sigma'$ from \Autoref{cor:denserec} and let $\Sigma_2$ be a set of axioms stating that $\varphi(x)$ defines an integer part. Moreover, let $\Sigma_3$ consist of all $\Lor$-sentences stating that $\psi(x)$ does not define a non-trivial convex valuation ring, where $\psi(x)$ ranges over all $\Lor$-formulas with one free variable. For any $\Lor$-formula $\alpha$, let $\widetilde{\alpha}$ be the formula in which all quantifiers $\exists x$ and $\forall x$ are bounded by $\varphi(x)$, that is, all instances of subformulas of the form $\exists x \theta(x,\ul{y})$ are replaced by $\exists x (\varphi(x) \wedge \theta(x,\ul{y}))$ and all instances of subformulas of the form $\forall x \theta(x,\ul{y})$ are replaced by $\forall x (\varphi(x) \to \theta(x,\ul{y}))$. 
		
		Let $\Sigma$ be the deductive closure of $\Sigma_1 \cup \Sigma_2\cup \Sigma_3$. Note that $\Sigma$ is recursive  and $(\Q,<) \models \Sigma$.
		If for every $\Lor$-sentence $\alpha \in \Th(\Z,+,-,\cdot,0,1,<)$, we had $\Sigma \vdash \widetilde{\alpha}$, then $\Th(\Z,+,-,\cdot,0,1,<)$ would be decidable. Hence, there is an $\Lor$-sentence $\sigma$ such that $\neg \sigma \in \Th(\Z,+,-,\cdot,0,1,<)$ and $\widetilde{\sigma},\neg\widetilde{\sigma} \notin \Sigma$. Let $\widetilde{\Sigma}$ be the consistent $\Lor$-theory $\Sigma \cup \setbr{\widetilde{\sigma}}$ and let $(K,<) \models \widetilde{\Sigma}$. Assume that $(K,<)$ is archimedean. Since $(K,<) \models \Sigma_2$, the formula $\varphi(x)$ defines the integer part $\Z$ of $K$. But then $(K,<) \models \neg \widetilde{\sigma}$, a contradiction. Hence, $(K,<)$ cannot have any archimedean models. However, since $(K,<) \models \Sigma_1$, we have that $K$ is dense in $\rc{K}$. Moreover, since $(K,<)\models \Sigma_3$, it does not admit a non-trivial $\Lor$-definable convex valuation.
	\end{proof}

	\subsection{Dense and immediate extensions}\label{sec:denseimm}

	We now analyse algebraic properties of ordered fields which are dense in their real closure. As pointed out 
	above, there is an intimate connection between integer parts and dense extensions of ordered fields. This connection is explored in \cite{biljakovic} and will further be addressed here. 
	Throughout this section, we denote the natural valuation $\vnat$ simply by $v$. 

	We start by investigating the connection between dense and immediate extensions of ordered fields.
	Let $(K,<) \subseteq (L,<)$ be an extension of ordered fields and let $w$ be a convex valuation on $L$. We say that $K$ is \textbf{$w$-dense} in $L$ if for any $a \in L$ and any $\alpha \in wL$, there exists $b \in K$ such that $w(a-b) > \alpha$. The following result is due to the fact that the topology induced by a non-trivial convex valuation on an ordered field coincides with the order topology (cf.\ e.g.\ \cite[Section~7.63]{alling}).
	
	\begin{fact}\thlabel{prop:wdense}
		Let $(K,<) \subseteq (L,<)$ be an extension of non-archimedean ordered fields and let $w$ be a non-trivial convex valuation on $L$. Then $K$ is $w$-dense in $L$ if and only if $K$ is dense in $L$.
	\end{fact}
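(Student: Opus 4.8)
The plan is to prove the two implications separately, relying on the standard fact that a non-trivial convex valuation $w$ on an ordered field induces precisely the order topology, so that the $w$-adic neighbourhoods of a point form a neighbourhood basis for the order topology and vice versa. The forward direction (''$w$-dense $\Rightarrow$ dense'') and the converse (''dense $\Rightarrow$ $w$-dense'') are both topological density statements phrased against different but topologically equivalent bases, so the core of the argument is to translate between intervals and valuation balls.

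\medskip

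First I would prove that if $K$ is $w$-dense in $L$, then $K$ is dense in $L$. Take $a,b \in L$ with $a<b$; the goal is to produce $c \in K$ with $a<c<b$. The natural candidate is to approximate the midpoint $m=(a+b)/2 \in L$ by an element of $K$ that is $w$-close enough to land strictly inside $(a,b)$. Concretely, since $w$ is non-trivial there is $\delta \in L$ with $w(\delta) > w(b-a)$ and $0<\delta$; choosing $\alpha = w(\delta) \in wL$ and applying $w$-density to $m$ yields $c \in K$ with $w(m-c) > w(b-a)$. Because $w$ is convex, $w(m-c) > w(b-a)$ forces $|m-c| < |b-a|/2$ in the order (this is the key conversion: a strictly larger value means strictly smaller in absolute value up to the archimedean component, and one sees $m-c$ lies in the same convex class strictly below $(b-a)/2$), so $c$ lands in the open interval $(a,b)$. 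This gives density of $K$ in $L$.

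\medskip

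For the converse, suppose $K$ is dense in $L$ and fix $a \in L$ and $\alpha \in wL$; I must find $b \in K$ with $w(a-b) > \alpha$. Pick $\varepsilon \in L^{>0}$ with $w(\varepsilon) = \alpha$ (possible since $\alpha \in wL$), and apply order-density of $K$ in $L$ to the interval $(a, a+\varepsilon)$ to obtain $b \in K$ with $a < b < a+\varepsilon$, so $0 < b-a < \varepsilon$. By convexity of $w$ this yields $w(a-b) = w(b-a) \geq w(\varepsilon) = \alpha$; to obtain the strict inequality $w(a-b)>\alpha$ one instead applies density to a strictly smaller interval $(a, a+\varepsilon')$ where $w(\varepsilon')>\alpha$, which exists because $wL$ has no maximal element (this uses non-archimedeanity of $L$, ensuring $w$ is non-trivial with infinitely many values above $\alpha$, via \thref{prop:denseeq} applied to $wL$ or directly from non-triviality). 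This delivers the required $b$.

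\medskip

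The main obstacle I anticipate is the careful bookkeeping in passing between the valuation and the order: one must justify that ''$w(x)$ large'' translates to ''$|x|$ small in the order'' with the correct strictness, which is exactly the content of convexity but requires attention because a convex valuation only weakly reverses the order (elements in the same archimedean class share a value). The cleanest route is to invoke the cited equivalence of topologies from \cite[Section~7.63]{alling} as a black box: density in the order topology is literally density in the $w$-topology, and $w$-density as defined is exactly density in the $w$-topology because the sets $\{b : w(a-b)>\alpha\}$ form a neighbourhood basis of $a$. Since the statement is labelled a \textbf{fact} and attributed to the topological coincidence, I would present the proof as this short topological identification rather than re-deriving the interval/ball conversions by hand, and the delicate strictness arguments above are what one would fill in only if a self-contained proof were desired.
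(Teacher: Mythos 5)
Your proposal is correct and matches the paper's treatment: the paper states this as a fact with no written proof, justifying it exactly by the observation you invoke, namely that the topology induced by a non-trivial convex valuation on an ordered field coincides with the order topology (citing Alling, Section~7.63), so order-density and $w$-density are the same topological density statement. Your hand-worked interval/ball conversions (including the strictness fix via an element of value strictly above $\alpha$, which exists because the non-trivial value group has no last element) are sound and would serve as the self-contained version the paper omits.
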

	


	By \cite[Lemma~1.31]{kuhlmann}, any $w$-dense extension of ordered fields is also immediate with respect to $w$. We thus obtain the following corollary to \Autoref{prop:wdense}.
	
	\begin{corollary}\thlabel{cor:denseimmfields}
		Let $(K,<) \subseteq (L,<)$ be a dense extension of non-ar\-chi\-me\-de\-an ordered fields. Then this extension is immediate with respect to any non-trivial convex valuation on $L$.
	\end{corollary}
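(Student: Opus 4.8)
The plan is to reduce the statement to the two ingredients assembled immediately before it, namely \Autoref{prop:wdense} and the cited result \cite[Lemma~1.31]{kuhlmann}. I would fix an arbitrary non-trivial convex valuation $w$ on $L$; since $w$ ranges over all such valuations, it suffices to prove that the extension is immediate with respect to this single $w$ and then note that $w$ was arbitrary. Here ``immediate with respect to $w$'' refers to the valued field extension obtained by equipping $K$ with the restriction of $w$, so that the value groups and residue fields are to be compared.

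First I would invoke \Autoref{prop:wdense}. Since $(K,<) \subseteq (L,<)$ is an extension of non-archimedean ordered fields and $w$ is a non-trivial convex valuation, density of $K$ in $L$ (our hypothesis) is equivalent to $w$-density of $K$ in $L$. Hence $K$ is $w$-dense in $L$. This is the only place where the non-archimedean hypothesis is genuinely used: it guarantees that the order topology and the topology induced by $w$ coincide, which is precisely what lets one translate order-density into the valuation-theoretic notion of $w$-density.

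Second, I would apply \cite[Lemma~1.31]{kuhlmann}, which asserts that any $w$-dense extension of ordered valued fields is immediate with respect to $w$. Applying this to $K \subseteq L$ with the valuation $w$ yields that the value group and residue field of $K$ under $w$ agree with those of $L$, i.e.\ the extension is immediate with respect to $w$. As $w$ was an arbitrary non-trivial convex valuation on $L$, this completes the argument.

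Since both key steps are already available as quoted results, I do not expect a serious obstacle; the proof is essentially a chaining of \Autoref{prop:wdense} with \cite[Lemma~1.31]{kuhlmann}. The only point requiring care is the logical structure, namely phrasing the argument for a fixed but arbitrary $w$ and making explicit that the restriction of $w$ to $K$ renders ``immediate with respect to $w$'' well defined. Were \Autoref{prop:wdense} not at hand, the substantive work would instead be to verify $w$-density directly: given $a \in L$ and $\alpha \in wL$, one would produce $b \in K$ with $w(a-b) > \alpha$ by selecting an order-interval around $a$ small enough to lie inside the $w$-ball of radius $\alpha$ and then invoking order-density of $K$ in $L$ to find such a $b$ inside that interval.
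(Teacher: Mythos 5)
Your proposal is correct and takes essentially the same approach as the paper: the corollary is presented there precisely as the chaining of \Autoref{prop:wdense} (translating order-density into $w$-density, using the non-archimedean hypothesis) with \cite[Lemma~1.31]{kuhlmann} (any $w$-dense extension is immediate with respect to $w$), applied to an arbitrary non-trivial convex valuation $w$ on $L$. There is nothing to add.
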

	
	\Autoref{cor:denseimmfields} can be applied to ordered Hahn fields as follows: 
	Let $(k,<)$ be an ordered field and $G\neq \{0\}$ an ordered abelian group. Suppose that $k\pow{G}$ is dense in $\rc{k\pow{G}}$. By \Autoref{cor:denseimmfields}, $k\pow{G} \subseteq \rc{k\pow{G}}$ is an immediate extension with respect to the valuation $\vmin$. Hence, $k= \rc{k}$ and $G = \div{G}$. This implies that $k\pow{G}$ is real closed. Hence, if $k\pow{G}$ is not real closed, then it is not dense in its real closure. This can be strengthened with the following proposition.
	
	\begin{proposition}\thlabel{prop:hahnclosed}
		Let $(k,<)$ be an ordered field and $G\neq \{0\}$ an ordered abelian group. Then ${k\pow{G}}$ is closed in $\rc{k\pow{G}}$.
	\end{proposition}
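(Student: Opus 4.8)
The plan is to adapt the strategy of \Autoref{prop:hahnnoarch} to the field setting and to show directly that every $a \in \rc{k\pow{G}}\setminus k\pow{G}$ fails to be a limit point of $k\pow{G}$; since this means $k\pow{G}$ contains all of its limit points in $\rc{k\pow{G}}$ with respect to the order topology, it yields that $k\pow{G}$ is closed in $\rc{k\pow{G}}$. The first step is to make the real closure concrete. As $\rc{k}$ is real closed and $\div{G}$ is divisible, the Hahn field $\rc{k}\pow{\div{G}}$ is real closed, and it extends $k\pow{G}$ as an ordered field; hence I may take $\rc{k\pow{G}}$ to be the relative algebraic closure of $k\pow{G}$ inside $\rc{k}\pow{\div{G}}$, which is a real closure of $k\pow{G}$. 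Consequently every $a \in \rc{k\pow{G}}$ is a Hahn series $a = \sum_{g \in \div{G}} a_g t^g$ with well-ordered support, coefficients $a_g \in \rc{k}$ and exponents in $\div{G}$, and $a \in k\pow{G}$ precisely when $\supp(a)\subseteq G$ and all $a_g \in k$. Moreover $t^{g}\in\rc{k\pow{G}}$ for every $g\in\div{G}$: writing $g = h/N$ with $h \in G$ and $N \in \N$ exhibits $t^{g}$ as the positive $N$-th root of $t^{h}\in k\pow{G}$.

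Now fix $a \in \rc{k\pow{G}}\setminus k\pow{G}$. Since $\supp(a)$ is well-ordered and $a \notin k\pow{G}$, the set $\setbr{g\in\supp(a) \mid g\notin G \text{ or } a_g\notin k}$ of ``bad'' exponents is non-empty and has a least element $g_*$. Because $\div{G}\neq\{0\}$ has no last element, I can choose $g_0\in\div{G}$ with $g_0 > g_*$ (say $g_0 = g_* + h$ for some $h\in G^{>0}$), and then $t^{g_0}\in\rc{k\pow{G}}$ with $t^{g_0}>0$. I claim that the open interval $(a-t^{g_0},\,a+t^{g_0})$ meets $k\pow{G}$ in nothing, so that $a$ is not a limit point of $k\pow{G}$.

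To verify the claim, suppose $b \in k\pow{G}$ satisfies $|a-b| < t^{g_0}$. Comparing leading terms shows $\vmin(a-b)\geq g_0 > g_*$, so $a$ and $b$ have equal coefficients at all exponents $\leq g_*$; in particular $a_{g_*}=b_{g_*}$. As $g_*\in\supp(a)$ we have $a_{g_*}\neq 0$, hence $g_*\in\supp(b)$. Distinguishing the two ways $g_*$ is bad yields a contradiction: if $g_*\notin G$ this contradicts $\supp(b)\subseteq G$, while if $g_*\in G$ but $a_{g_*}\notin k$ then $a_{g_*}=b_{g_*}\in k$ contradicts the choice of $g_*$. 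Thus no such $b$ exists and the interval avoids $k\pow{G}$, completing the argument.

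The only external input is the classical fact that $\rc{k}\pow{\div{G}}$ is real closed, which lets me replace the opaque field $\rc{k\pow{G}}$ by a Hahn field whose supports and coefficients can be inspected coordinate-wise; after that the combinatorics is exactly that of \Autoref{prop:hahnnoarch}. I expect the step needing the most care to be the implication $|a-b| < t^{g_0} \Rightarrow \vmin(a-b) > g_*$, i.e.\ correctly matching the order topology with the valuation $\vmin$ at the threshold exponent $g_0$; once this is secured, the minimality of $g_*$ and the two-case split finish the proof.
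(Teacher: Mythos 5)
Your proof is correct and follows essentially the same route as the paper's: both locate the least exponent at which the series fails to lie in $k\pow{G}$ (the paper's ``monomial of least exponent not contained in $K$'' is exactly your $g_*$) and surround the point with an interval of radius $t^{g_1}$ for an exponent $g_1$ beyond that threshold, so that every element of the interval carries the offending monomial. The only difference is that you make explicit the identification of $\rc{k\pow{G}}$ with a subfield of the real closed Hahn field $\rc{k}\pow{\div{G}}$, a step the paper's proof uses implicitly when it treats elements of the real closure as series with monomials.
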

	
	\begin{proof}
		Let $K = k\pow{G}$ and $ R = \rc{K}$. We need to show that $R\setminus K$ is open in $R$. If $K=R$, then $R\setminus K = \emptyset$. Hence, assume that $K \subsetneq R$. Let $at^{g_0}$ be the monomial of $s$ of least exponent which is not contained in $K$, and let $g_1 \in G^{>g_0}$ be arbitrary. Then the open interval
		$$I=(s-t^{g_1},s+t^{g_1}) \subseteq R$$
		contains $s$. However, any element in $I$ contains the monomial $at^{g_0}$ and is thus not contained in $K$. Hence, $s$ is contained in an open neighbourhood in $R \setminus K$, as required.
	\end{proof}

	\begin{corollary}\thlabel{rmk:hahnfieldnoarch}
		Let $(k,<)$ be an ordered field and let $G \neq \{0\}$ be an ordered abelian group. Suppose that $k\pow{G}$ is not real closed. Then $(k\pow{G},<)$ has no archimedean model.
	\end{corollary}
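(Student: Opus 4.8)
The plan is to combine the two immediately preceding results --- that having an archimedean model forces density in the real closure, and that Hahn fields are topologically closed in their real closure --- to force $k\pow{G}$ to coincide with its real closure whenever it has an archimedean model. Since coincidence with the real closure means being real closed, this will contradict the hypothesis and prove the corollary by contraposition.

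First I would suppose, towards a contradiction, that $(k\pow{G},<)$ has an archimedean model. By \Autoref{thm:densitytransfers2}, any ordered field with an archimedean model is dense in its real closure, so $k\pow{G}$ is dense in $\rc{k\pow{G}}$; in terms of the order topology this says $\cl(k\pow{G}) = \rc{k\pow{G}}$. On the other hand, \Autoref{prop:hahnclosed} asserts that $k\pow{G}$ is closed in $\rc{k\pow{G}}$, i.e.\ $\cl(k\pow{G}) = k\pow{G}$. Putting these two descriptions of the closure together yields $k\pow{G} = \rc{k\pow{G}}$, so $k\pow{G}$ is real closed, contradicting the standing hypothesis that it is not. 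Hence no archimedean model can exist.

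I do not expect a substantial obstacle here, since both ingredients are already established; the one point to verify is that \emph{dense} and \emph{closed} are taken with respect to the same (order) topology on $\rc{k\pow{G}}$ --- which they are, as both \Autoref{thm:densitytransfers2} and \Autoref{prop:hahnclosed} are phrased in the order topology --- and the elementary topological fact that a subset which is simultaneously dense and closed in a space must be the whole space. (As an alternative route giving the same conclusion, one could instead invoke the remark preceding \Autoref{prop:hahnclosed}: via \Autoref{cor:denseimmfields} a non-real-closed Hahn field $k\pow{G}$ fails to be dense in its real closure, which again contradicts the density forced by \Autoref{thm:densitytransfers2}.)
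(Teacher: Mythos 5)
Your proposal is correct and uses exactly the same two ingredients as the paper's proof (\Autoref{prop:hahnclosed} and \Autoref{thm:densitytransfers2}); the paper merely arranges them contrapositively --- since $k\pow{G}$ is closed in and properly contained in $\rc{k\pow{G}}$, it is not dense there, hence has no archimedean model --- while you argue by contradiction, which is logically the same argument.
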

	
	\begin{proof}
		By \Autoref{prop:hahnclosed}, $k\pow{G}$ is not dense in $\rc{k\pow{G}}$. Thus, by \Autoref{thm:densitytransfers2} $(k\pow{G},<)$ has no archimedean model.
	\end{proof}

	We obtain the following result regarding the existence of henselian valuations in ordered fields which are dense in their real closure.
	
	\begin{corollary}\thlabel{cor:nohensval}
		Let $(K,<)$ be an ordered field which is not real closed but dense in $\rc{K}$. Then $K$ does not admit a non-trivial henselian valuation.
	\end{corollary}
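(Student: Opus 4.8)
The plan is to derive a contradiction from the assumption that a field which is not real closed but dense in its real closure admits a non-trivial henselian valuation. The key structural fact to exploit is that henselian valuations on ordered fields are automatically convex (noted in the introduction via \cite[Lemma~2.1]{knebusch}), so that the machinery of convex valuations and the density/immediacy results of this section become available. The strategy is therefore to show that the existence of such a valuation would force $(K,<)$ to already be real closed.

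First I would suppose, for contradiction, that $w$ is a non-trivial henselian valuation on $K$. Since $w$ is henselian on an ordered field, it is convex by \cite[Lemma~2.1]{knebusch}. Because $K$ is dense in $\rc{K}$ and $K$ is not real closed, the extension $K \subseteq \rc{K}$ is proper; note also that $K$ must be non-archimedean, since every archimedean ordered field is already dense in $\R$ and admits no non-trivial convex valuation. Thus $(K,<) \subseteq (\rc{K},<)$ is a dense extension of non-archimedean ordered fields, and \Autoref{cor:denseimmfields} applies: this extension is immediate with respect to \emph{any} non-trivial convex valuation on $\rc{K}$. The henselian valuation $w$ extends (uniquely, by henselianity) to a convex valuation on the real closed field $\rc{K}$, and the immediacy of $K \subseteq \rc{K}$ with respect to this extension gives $Kw = \rc{K}w$ and $wK = w\rc{K}$ as value group.

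Next I would extract the contradiction from this immediacy. Since $\rc{K}$ is real closed, its residue field $\rc{K}w$ under any convex valuation is real closed and its value group $w\rc{K}$ is divisible. Immediacy then forces the residue field $Kw$ to be real closed and the value group $wK$ to be divisible. But a henselian valued field with real closed residue field and divisible value group is itself real closed (this is the ordered Ax--Kochen--Ershov situation: by \Autoref{fact:ake}, $(K,<,w)$ is elementarily equivalent to an ordered henselian valued field with real closed residue and divisible group, hence to a real closed field, and henselianity together with these residue and value-group properties upgrades this to $K$ being actually real closed). This contradicts the hypothesis that $K$ is not real closed.

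\textbf{The main obstacle} I anticipate is the careful bookkeeping around the extension and uniqueness of $w$ to $\rc{K}$ and the precise argument that a henselian valuation with real closed residue field and divisible value group yields a real closed field rather than merely an elementarily-equivalent one. A cleaner route, which I would prefer to present, avoids extending $w$ altogether: apply the discussion immediately preceding \Autoref{prop:hahnclosed}, which already shows that density of a field in its real closure combined with immediacy forces the residue field and value group to be ``closed'' in the relevant sense. Concretely, one runs the same reasoning used there for Hahn fields: $w$-density of $K$ in $\rc{K}$ (equivalent to density by \Autoref{prop:wdense}) makes the extension immediate with respect to $w$ itself, so $Kw$ is dense in---hence equal to---$\rc{K}w = \rc{Kw}$ and $wK$ is divisible, whence $Kw$ is real closed and $wK$ divisible; by henselianity $K$ is then real closed, the desired contradiction.
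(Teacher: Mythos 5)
Your proof is correct, but it takes a genuinely different route from the paper's. The paper argues by transfer: for a non-trivial henselian $w$ on non-archimedean $K$, \Autoref{fact:ake} gives $(K,<)\equiv(Kw\pow{wK},<)$; by \Autoref{thm:densitytransfers}, density in the real closure passes to the Hahn field; and \Autoref{prop:hahnclosed} (Hahn fields are closed in their real closure) then forces $wK=\{0\}$. You instead work directly with the extension $K\subseteq\rc{K}$: extend $w$ to $\rc{K}$, invoke \Autoref{cor:denseimmfields} to get immediacy, read off that $Kw$ is real closed and $wK$ is divisible from the corresponding properties of $\rc{K}$, and then use \Autoref{fact:ake} together with the elementarity of real closedness to conclude that $K$ is real closed, a contradiction. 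In effect you generalise, from Hahn fields to arbitrary ordered fields with a henselian valuation, the argument the paper itself runs in the paragraph following \Autoref{cor:denseimmfields}. What each approach buys: the paper's proof is shorter given its toolkit and uses only its own numbered results; yours avoids both \Autoref{thm:densitytransfers} (hence the axiomatisation machinery of \Autoref{prop:omindenserec}) and \Autoref{prop:hahnclosed}, at the price of two standard valuation-theoretic inputs the paper never isolates as numbered facts: that a convex valuation on a real closed field has real closed residue field and divisible value group, and that the unique extension of $w$ to $\rc{K}$ is again henselian and hence convex by Knebusch's lemma applied to $(\rc{K},<)$ --- your parenthetical ``uniquely, by henselianity'' justifies uniqueness but not convexity, and convexity is genuinely needed to apply \Autoref{cor:denseimmfields}, since real closed fields do carry non-convex valuations. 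Note also that your final ``cleaner route'' does not actually avoid extending $w$: immediacy or $w$-density of $K\subseteq\rc{K}$ only makes sense once $w$ is defined on $\rc{K}$, so the first version of your argument is the one to keep.
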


	\begin{proof}
		If $K$ is archimedean, then the only possible henselian valuation on $K$ is the trivial one. If $K$ is non-archimedean, let $w$ be a henselian valuation on $K$. By \Autoref{fact:ake}, we have $(K,<) \equiv (Kw\pow{wK},<)$. By \Autoref{thm:densitytransfers}, $Kw\pow{wK}$ is dense in $\rc{Kw\pow{wK}}$. By \Autoref{prop:hahnclosed}, this is only possible if $wK = \{0\}$, i.e.\ $w$ is the trivial valuation.
	\end{proof}
	
	We now focus on immediate extension with respect to the natural valuation $v$.
	For an ordered field $K$, let $\mathbf{A}_K$ be a group complement to $\OO_v$ in $K$, i.e.\ $\mathbf{A}_K$ is an ordered subgroup of $K$ such that $K = \mathbf{A}_K \oplus \OO_v$. Note that for an extension of ordered field $(K,<) \subseteq (L,<)$, the group complement $\mathbf{A}_K$ can be extended to a group completement $\mathbf{A}_L$. The following is a useful characterisation of dense extensions of ordered field (cf.\ \cite[Lemma~1.32]{kuhlmann}).
	
	\begin{fact}\thlabel{fact:complements}
		Let $(K,<) \subseteq (L,<)$ be an extension of ordered fields. Then $K$ is dense in $L$ if and only if $\mathbf{A}_K = \mathbf{A}_L$.
	\end{fact}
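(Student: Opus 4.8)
The plan is to reduce everything to the natural valuation $v$ and to exploit \Autoref{prop:wdense}. First I would dispose of the archimedean case: if $K$ (equivalently $L$) is archimedean then $\OO_v$ is all of the field, so $\mathbf{A}_K=\mathbf{A}_L=\{0\}$ automatically, while density holds because every archimedean ordered field embeds into $\R$ and contains $\Q$, which is dense in $\R$; thus both sides of the equivalence are true. So I may assume $K$, and hence $L$, is non-archimedean, so that $vL$ is a nontrivial ordered group and \Autoref{prop:wdense} gives: $K$ is dense in $L$ iff $K$ is $v$-dense in $L$. Two bookkeeping facts will be used throughout. Any nonzero $a$ in a complement $\mathbf{A}$ to $\OO_v$ satisfies $v(a)<0$, since $\mathbf{A}\cap\OO_v=\{0\}$ and $\OO_v=\{x : v(x)\ge 0\}$. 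And for every $\gamma\in (vL)^{<0}$ there is $m\in\mathbf{A}_L$ with $v(m)=\gamma$: split any $y$ with $v(y)=\gamma$ as $m+o$, $m\in\mathbf{A}_L$, $o\in\OO_v$; then $v(o)\ge 0>\gamma$ forces $v(m)=\gamma$. As $vL$ is a nontrivial group, such values are unbounded below.

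For the forward direction I argue by contraposition. Suppose $\mathbf{A}_K\subsetneq\mathbf{A}_L$ and pick $a\in\mathbf{A}_L\setminus\mathbf{A}_K$; then $a\neq 0$ and $v(a)<0$. For any $c\in K$ write $c=\alpha_c+o_c$ with $\alpha_c\in\mathbf{A}_K\subseteq\mathbf{A}_L$ and $o_c\in\OO_v$. Since $a\notin\mathbf{A}_K$ we have $a-\alpha_c\in\mathbf{A}_L\setminus\{0\}$, so $v(a-\alpha_c)<0\le v(o_c)$, and the ultrametric inequality gives $v(a-c)=v((a-\alpha_c)-o_c)=v(a-\alpha_c)<0$. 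Thus $|a-c|>1$ for every $c\in K$, so the interval $(a-1,a+1)$ meets $K$ nowhere and $K$ is not dense in $L$.

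For the converse, assume $\mathbf{A}_K=\mathbf{A}_L$, so in particular $\mathbf{A}_L\subseteq K$, and I show $K$ is $v$-dense. Fix $x\in L$ and a target $\gamma_0\in vL$. The crucial step, which genuinely uses the multiplicative structure, is to choose $m\in\mathbf{A}_L$ with $v(m)=\mu$ as negative as desired (possible by the remark above), then split the \emph{product} $xm=\alpha+o$ with $\alpha\in\mathbf{A}_L\subseteq K$ and $o\in\OO_v$, so $v(o)\ge 0$. Dividing back, $c:=\alpha/m$ lies in $K$ (as $\alpha,m\in\mathbf{A}_L\subseteq K$ and $m\neq 0$), and $v(x-c)=v(o/m)=v(o)-\mu\ge-\mu$. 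Choosing $\mu$ with $-\mu>\gamma_0$ yields $v(x-c)>\gamma_0$, so $K$ is $v$-dense, hence dense in $L$ by \Autoref{prop:wdense}.

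The main obstacle is exactly this converse: the hypothesis $\mathbf{A}_K=\mathbf{A}_L$ is purely additive and, taken at face value, only gives approximation up to valuation $0$ (every element of $L$ lies within valuation $0$ of an element of $K$). The ``multiply by a deeply valued element of $\mathbf{A}_L$, then divide back'' trick is what upgrades this to approximation of arbitrarily high order, and it silently forces $Kv=Lv$, consistent with \Autoref{cor:denseimmfields}. In writing this up I would only need to verify carefully that each quotient $\alpha/m$ really lands in $K$ and that every application of the ultrametric equality occurs where the two valuations strictly differ.
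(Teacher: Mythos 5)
The paper itself gives no proof of this fact: it is quoted from \cite[Lemma~1.32]{kuhlmann}, so your argument can only be compared with the standard one, and its two main computations are indeed correct and essentially standard. The forward direction by contraposition is fine: a nonzero $a\in\mathbf{A}_L\setminus\mathbf{A}_K$ satisfies $v(a-c)=v\bigl((a-\alpha_c)-o_c\bigr)=v(a-\alpha_c)<0$ for every $c\in K$ by the ultrametric equality, so the interval $(a-1,a+1)$ avoids $K$. The converse is also sound: the hypothesis $\mathbf{A}_K=\mathbf{A}_L$ only gives approximation up to valuation $0$ (equivalently $L=K+\OO_v$ with $\OO_v$ the valuation ring of $L$), and your ``multiply by $m\in\mathbf{A}_L\subseteq K$ with $v(m)=\mu\ll 0$, decompose, divide back'' trick correctly upgrades this to $v$-density, after which \Autoref{prop:wdense} finishes the job; the memberships $\alpha/m\in K$ and the valuation computations all check out.

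The one genuine flaw is your opening reduction. You assert that $K$ is archimedean ``(equivalently $L$)'' and that in this case both sides of the biconditional are true. The equivalence is false: take $K=\Q$ and $L=\Q(t)$ with $t>\Q$; then $K$ is archimedean while $L$ is not. In that mixed case your case~(1) reasoning collapses, since $\mathbf{A}_L\neq\{0\}=\mathbf{A}_K$ and $K$ is \emph{not} dense in $L$ — both sides of the equivalence are false, not true — so as written your case analysis simply does not cover the configuration ($K$ archimedean, $L$ non-archimedean), and this matters for the converse, where \Autoref{prop:wdense} requires both fields to be non-archimedean. The repair is cheap and uses only material already in your write-up: your contrapositive argument for the forward direction nowhere uses non-archimedeanness, so it disposes of the mixed case (there $\mathbf{A}_K\subsetneq\mathbf{A}_L$, and indeed $K$ is not dense); and in the converse direction the hypothesis $\mathbf{A}_K=\mathbf{A}_L$ itself forces $K$ and $L$ to be archimedean simultaneously, because a nonzero element of a complement has negative value, while any $x\in L$ with $v(x)<0$ decomposes as $\alpha+o$ with $\alpha\in\mathbf{A}_L\setminus\{0\}$. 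With that correction inserted, your proof is complete.
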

	
	Whenever the extension of ordered fields $(K,<)\subseteq (\rc{K},<)$ is dense and thus immediate, then $Kv$ is real closed and $vK$ is divisible. The converse holds under the additional assumption that $vK$ is archimedean (cf.\ \cite[Proposition~9]{viswanathan}).
	
	\begin{fact}\thlabel{fact:immimpdense}
		Let $(K,<)$ be an ordered field such that $Kv$ is real closed and $vK$ is divisible and archimedean. Then $K$ is dense in $\rc{K}$.
	\end{fact}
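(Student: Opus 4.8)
The plan is to realise $\rc{K}$ inside the order-completion of $K$ and to invoke the Ax--Kochen--Ershov principle. Throughout write $v = \vnat$. If $vK = \{0\}$, then $K$ is archimedean and the claim is immediate (for instance by \Autoref{thm:densitytransfers2}), so I assume $vK \neq \{0\}$; since $vK$ is archimedean, $v$ then has rank one. Let $\hat{K}$ denote the completion of the ordered field $(K,<)$ with respect to the order topology, which coincides with the $v$-topology since $v$ is a non-trivial convex valuation (cf.\ \cite[Section~7.63]{alling}). Then $\hat{K}$ is an ordered field extending $K$, the valuation $v$ extends continuously to a convex valuation on $\hat{K}$, and $K$ is dense in $\hat{K}$ by construction.

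First I would check that the extension $K \subseteq \hat{K}$ is immediate, i.e.\ $\hat{K}v = Kv$ and $v\hat{K} = vK$. This is where the hypothesis that $vK$ is archimedean enters decisively: the completion of a rank-one valued field adds no new values and no new residues (cf.\ \cite{kuhlmann,engler}). Being complete, $\hat{K}$ is henselian with respect to $v$. Now consider the ordered Hahn field $R = Kv\pow{vK}$ together with $\vmin$; since $Kv$ is real closed and $vK$ is divisible, $R$ is real closed, and $\vmin$ is convex and henselian with $R\vmin = Kv$ and $\vmin R = vK$. Thus $(\hat{K},<,v)$ and $(R,<,\vmin)$ are both ordered henselian valued fields with residue field $Kv$ and value group $vK$. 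By \Autoref{fact:ake} we conclude $(\hat{K},<,v) \equiv (R,<,\vmin)$, and in particular $(\hat{K},<) \equiv (R,<)$. As $R$ is real closed and $\Trcf$ is complete, this forces $\hat{K}$ to be real closed.

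Finally, since $\hat{K}$ is a real closed field containing $K$, the real closure $\rc{K}$ embeds into $\hat{K}$ over $K$ as ordered fields; identifying $\rc{K}$ with its image we obtain $K \subseteq \rc{K} \subseteq \hat{K}$. As $K$ is dense in $\hat{K}$, it is a fortiori dense in $\rc{K}$, as required. I expect the main obstacle to be precisely the immediacy of the completion $K \subseteq \hat{K}$: this is the one step that genuinely uses archimedeanness of $vK$, for a higher-rank value group the completion may enlarge the residue field or the value group, so that $\hat{K}$ need no longer satisfy the hypotheses of \Autoref{fact:ake} nor be real closed. An alternative, more computational route avoiding the completion would be a Hensel--Newton iteration: starting from an approximation supplied by the immediacy of $K \subseteq \rc{K}$ (which holds since $Kv$ is real closed and $vK$ is divisible), Newton's method improves an approximation $b_n \in K$ of a root $a$ quadratically once $v(a-b_n)$ exceeds $v(p'(a))$, and the archimedean property of $vK$ turns this geometric growth into cofinality in $vK$, yielding $v$-density and hence density via \Autoref{prop:wdense}.
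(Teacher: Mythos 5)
Your proof is correct, but note that the paper itself does not prove this statement at all: it is quoted as a Fact from \cite[Proposition~9]{viswanathan}, whose argument is much closer in spirit to the Newton-iteration sketch you offer as an ``alternative, more computational route'' at the end. Your main route, via the completion, is a genuinely different and essentially self-contained argument, and every step is sound: reduce to $vK \neq \{0\}$, so that $\vnat$ has rank one; form the order (equivalently, valuation) completion $\hat{K}$, which is an ordered field in which $K$ is dense; use that $K \subseteq \hat{K}$ is immediate and that complete rank-one valued fields are henselian; apply \Autoref{fact:ake} together with the standard fact that $Kv\pow{vK}$ is real closed whenever $Kv$ is real closed and $vK$ is divisible (a fact the paper itself uses in \Autoref{sec:denseimm}) to conclude that $\hat{K}$ is real closed; and finally identify $\rc{K}$ with the relative algebraic closure of $K$ inside $\hat{K}$, so that density of $K$ in $\hat{K}$ restricts to density in $\rc{K}$. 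What your route buys is uniformity with the rest of the paper: everything is funnelled through the Ax--Kochen--Ershov principle, exactly as in the proofs of \Autoref{cor:nohensval} and \Autoref{thm:defval}. What Viswanathan's approach buys is that it avoids constructing the completion and its attendant standard-but-nontrivial facts (existence as an ordered field, immediacy, henselianity), working instead directly with polynomial approximation inside $\rc{K}$.

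One inaccuracy in your commentary, though it does not affect the proof: you locate the decisive use of the archimedean hypothesis in the immediacy of $K \subseteq \hat{K}$. In fact the completion of a valued field is an immediate extension in any rank (density of $K$ in $\hat{K}$ plus the ultrametric inequality forces equality of value groups and of residue fields). What genuinely fails beyond rank one is that completeness no longer implies henselianity, so the appeal to \Autoref{fact:ake} breaks down and $\hat{K}$ need not be real closed. Some step must indeed fail, since \cite[Remark~3.5]{biljakovic}, quoted in the paper immediately after this Fact, gives an ordered field with $Kv$ real closed and $vK$ divisible but non-archimedean which is not dense in its real closure.
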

	
	\cite[Remark~3.5]{biljakovic}
	provides an example of an ordered field field $(K,<)$ such that $Kv$ is real closed and $vK$ can be chosen to be divisible and non-archimedean (i.e.\ $K\subseteq \rc{K}$ is immediate) but $\mathbf{A}_K \neq \mathbf{A}_{\rc{K}}$, whence $K$ is not dense in $\rc{K}$.
	
	\subsection{Normal integer parts}\label{sec:ip}
	
	We now turn to normal integer parts in connection to ordered fields dense in their real closure.
	Due to \cite{mourgues}, every real closed field admits an integer part. In \cite[Remark~3.3~(iii)]{biljakovic} the question is asked whether every real closed fields admits a \textbf{normal} integer part, i.e.\ an integer part $Z$ which is integrally closed in its field of fractions $\ff(Z)$.
	
	\begin{remark}\thlabel{rmk:normalip}
			 For an ordered field $(K,<)$, if $Z$ is an integer part of $K$, then $\ff(Z)$ is dense in $K$. Moreover, if $Z$ is normal, then $\Q\subseteq \ff(Z)$ is regular, i.e.\ $\Q$ is relatively algebraically closed in $\ff(Z)$. Hence, if an ordered field admits a normal integer part, then it has a dense subfield which is regular over $\Q$. Note that any archimedean ordered field has $\Z$ as a normal integer part. Hence, the following question is closely related to the one in \cite[Remark~3.3~(iii)]{biljakovic}: 
			Given a non-archimedean real closed field $(K,<)$, does it contain a dense subfield $F \subseteq K$ such that $F$ is regular over $\Q$? In the following, we will give a positive answer to this in the case that the absolute transcendence degree of $K$ is infinite.
	\end{remark}

	For an ordered field $(K,<)$, we denote its absolute transcendence degree, i.e.\ its transcendence degree over $\Q$,  by $\td(K)$. We say that $T \subseteq K$ is a transcendence basis of $K$ if it is a transcendence basis of $K$ over $\Q$.
	The following result is due to \cite[Lemma~2.3]{erdos}.
	
	\begin{fact}\thlabel{lem:lemmaerdos}
		Let $(K,<)$ be an uncountable ordered field. Then $K$ admits a dense transcendence basis.
	\end{fact}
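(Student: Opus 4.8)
The statement to prove is \thref{lem:lemmaerdos}: every uncountable ordered field $(K,<)$ admits a dense transcendence basis. Here is how I would approach it.

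\medskip

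The plan is to build a transcendence basis by transfinite recursion, at each step choosing a transcendental element inside a prescribed small open interval, so that the resulting basis is automatically dense. Write $\kappa = |K|$, which is uncountable, and fix an enumeration $(I_\alpha)_{\alpha < \kappa}$ of all nonempty open intervals of $K$ with endpoints in $K$; since $|K| = \kappa$, there are exactly $\kappa$ such intervals. The key observation is that $\td(K) = \kappa$: an uncountable field has transcendence degree equal to its cardinality over the countable prime field, because any field of transcendence degree $\lambda$ over $\Q$ has cardinality $\max(\aleph_0,\lambda)$, and $\kappa > \aleph_0$ forces $\lambda = \kappa$. So there is ``enough room'' to place a transcendental in each of the $\kappa$ intervals.

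\medskip

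First I would carry out the recursion. Suppose at stage $\alpha < \kappa$ I have chosen elements $t_\beta$ for $\beta < \alpha$, all algebraically independent over $\Q$, with $t_\beta \in I_\beta$. I want to pick $t_\alpha \in I_\alpha$ transcendental over $\Q(t_\beta : \beta < \alpha)$. The field $F_\alpha := \Q(t_\beta : \beta < \alpha)$ has cardinality at most $\max(\aleph_0,|\alpha|) < \kappa$ (using $|\alpha| < \kappa$ and $\kappa$ uncountable, hence regular or at least $> |\alpha| + \aleph_0$ in cardinality terms). Its relative algebraic closure $\ol{F_\alpha}$ in $K$ also has cardinality $< \kappa$, since the algebraic closure of a field of size $< \kappa$ again has size $< \kappa$. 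The crucial point is that the open interval $I_\alpha$ has cardinality $\kappa$: indeed $K$ is densely ordered (being uncountable, and in particular $I_\alpha$ is infinite), and more strongly any nonempty open interval of $K$ must have full cardinality $\kappa$, for otherwise $K$ would be covered by too few small pieces. Hence $I_\alpha \setminus \ol{F_\alpha} \neq \emptyset$, and any element $t_\alpha$ of this set is transcendental over $F_\alpha$ and lies in $I_\alpha$, completing the recursive step.

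\medskip

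Finally I would extend $\{t_\alpha : \alpha < \kappa\}$ to a full transcendence basis $T$ of $K$ by adding further algebraically independent elements if necessary; since $T \supseteq \{t_\alpha\}$ already meets every basic open interval $I_\alpha$, the set $T$ is dense in $K$. The main obstacle, and the step deserving the most care, is the cardinality bookkeeping: I must verify both that there are exactly $\kappa$ intervals to enumerate and, more importantly, that each interval $I_\alpha$ genuinely has cardinality $\kappa$ so that it cannot be exhausted by the $< \kappa$ elements algebraic over $F_\alpha$. The claim that every nonempty open interval of an uncountable ordered field has full cardinality is the one place where uncountability is used essentially, and I would justify it by noting that $K$ is a union of countably many translates/dilates is impossible, or more directly by an order-theoretic argument that a densely ordered field cannot have all intervals of size $< \kappa$ while having total size $\kappa$; this is exactly what Erdős–Gillman–Henriksen isolate in \cite[Lemma~2.3]{erdos}.
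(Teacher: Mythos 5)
There is no proof in the paper to compare against directly: the paper records this statement as a Fact, quoting \cite[Lemma~2.3]{erdos}, and proves only the countable analogue (\Autoref{prop:lemmaerdos}), where your cardinality count is unavailable and the key input is instead \cite[Lemma~2.2]{erdos} (if every element of some interval is algebraic over a subfield, then all of $K$ is). Your transfinite recursion is the natural argument for the uncountable case, and its skeleton is correct: there are $\kappa=|K|$ intervals; $\td(K)=\kappa$ because $|K|=\max(\aleph_0,\td(K))$; at stage $\alpha$ the relative algebraic closure of $\Q(t_\beta:\beta<\alpha)$ in $K$ has cardinality at most $\max(\aleph_0,|\alpha|)<\kappa$; and any algebraically independent set meeting every interval extends to a transcendence basis that is still dense. (One aside: you do not need, and should not invoke, regularity of $\kappa$ --- it can fail, e.g.\ for $\kappa=\aleph_\omega$; the inequality $\max(\aleph_0,|\alpha|)<\kappa$ holds simply because $\kappa$ is an uncountable cardinal and $\alpha<\kappa$.)

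The genuine weak point is the step you yourself single out: that every interval $(a,b)$ with $a<b$ has cardinality $\kappa$. Your proposed justification via ``countably many translates/dilates'' fails when $K$ is non-archimedean: the translates $(q-1,q+1)$, $q\in\Q$, do not cover $K$, while translating by all elements of $K$ uses $\kappa$ pieces and proves nothing; moreover, citing \cite[Lemma~2.3]{erdos} at this point would be circular, since by the paper's own attribution that lemma \emph{is} the statement being proved. The claim is nevertheless true, with a short argument you should include: the affine bijection $x\mapsto a+x(b-a)$ shows that all nonempty bounded open intervals have one and the same cardinality $\mu\geq\aleph_0$; the bijections $x\mapsto 1/x$ from $(0,1)$ onto $(1,\infty)$ and $x\mapsto -x$ from $(1,\infty)$ onto $(-\infty,-1)$ then exhibit $K=(-\infty,-1)\cup[-1,1]\cup(1,\infty)$ as a union of finitely many sets of cardinality $\mu$ together with two points, so $\kappa=|K|=\mu$. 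With that lemma supplied, your proof is complete.
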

	
	The arguments in \cite{erdos} can be generalised to countable ordered fields with countably infinite transcendence basis.
	
	\begin{proposition}\thlabel{prop:lemmaerdos}
		Let $(K,<)$ be a countable ordered field with $\td(K) = \aleph_0$. Then $K$ admits a dense transcendence basis.
	\end{proposition}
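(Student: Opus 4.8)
The plan is to build the basis $T = \{t_n : n \in \N\}$ by a recursion over $\omega$, dovetailing the density requirement with the preservation of algebraic independence; this is the countable analogue of the recursion underlying \Autoref{lem:lemmaerdos}. Since $K$ is countable, I would first fix an enumeration $\left((c_n,d_n)\right)_{n \in \N}$ of all pairs $(c,d) \in K^2$ with $c < d$. At stage $n$, having chosen $t_0,\ldots,t_{n-1}$, I set $F_n = \Q(t_0,\ldots,t_{n-1})$, a finitely generated and hence countable subfield of $K$, and I select $t_n \in (c_n,d_n)$ transcendental over $F_n$. Granting that such a choice is always possible, the set $T$ meets every interval of $K$ with endpoints in $K$ and is therefore dense in $K$; and since each $t_n$ is transcendental over $\Q(t_0,\ldots,t_{n-1})$, the sequence $(t_n)$ is algebraically independent over $\Q$.

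The decisive lemma, and the point where the hypothesis $\td(K) = \aleph_0$ is essential, is: for any finitely generated subfield $F \subseteq K$ and any $c < d$ in $K$, the interval $(c,d)$ contains an element transcendental over $F$. To prove it I would pass to $F' = F(c,d)$, which still has finite transcendence degree over $\Q$; since $\td(K) = \aleph_0$, the field $K$ is not algebraic over $F'$, so there is some $w \in K$ transcendental over $F'$. I then set
$$u = \frac{1}{1+w^2} \qquad\text{and}\qquad t = c + \frac{d-c}{2}\,u,$$
observing that $0 < u < 1$ holds in the ordered field $K$ (no real closedness is needed, only $1 + w^2 \geq 1$), whence $t \in \left(c,\tfrac{c+d}{2}\right) \subseteq (c,d)$. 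As $t = c + \frac{d-c}{2(1+w^2)}$ is a non-constant rational function of $w$ with coefficients in $F'$, and $w$ is transcendental over $F'$, the element $t$ is transcendental over $F'$, and \emph{a fortiori} over $F$.

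Finally, I would note that the construction produces infinitely many pairwise distinct elements, since an element transcendental over $F_n$ cannot lie in $F_n$. Thus $T$ is an algebraically independent set of cardinality $\aleph_0 = \td(K)$, and any algebraically independent set whose cardinality equals the transcendence degree is automatically a transcendence basis; hence $T$ is the desired dense transcendence basis. I expect the only genuine obstacle to be the verification of the lemma, specifically guaranteeing that a transcendental element can always be placed inside a prescribed interval. The explicit substitution above resolves this uniformly, and the remaining bookkeeping (density from the enumeration, algebraic independence from the stagewise choice of transcendentals) is routine. It is worth remarking that the requirement $\td(K)=\aleph_0$ cannot be dropped: if $\td(K)$ were finite, some $F_n$ would already be relatively algebraically closed in $K$ along a full basis, obstructing the construction, and indeed a finite transcendence basis can never be dense in an infinite densely ordered field.
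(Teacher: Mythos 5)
Your recursion and your interval lemma are both correct: the enumeration of intervals, the stagewise choice of $t_n$, and in particular the explicit substitution producing $t = c + \frac{d-c}{2(1+w^2)} \in (c,d)$ transcendental over $F' = F(c,d)$ from any $w$ transcendental over $F'$ are all sound. (This lemma is in fact a self-contained replacement for the paper's appeal to \cite[Lemma~2.2]{erdos}, which is the only real difference in route up to that point.) The genuine gap is your final step. The claim that \emph{any algebraically independent set whose cardinality equals the transcendence degree is a transcendence basis} is true for finite transcendence degree but false for infinite: in $K = \Q(x_1,x_2,x_3,\ldots)$ the set $\{x_2,x_3,x_4,\ldots\}$ is algebraically independent of cardinality $\aleph_0 = \td(K)$, yet it is not a transcendence basis, since $x_1$ is not algebraic over $\Q(x_2,x_3,\ldots)$. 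Nothing in your construction rules out exactly this situation: each $t_n$ is only required to lie in the $n$-th interval and to be transcendental over its predecessors, so there is no reason why $K$ should be algebraic over $\Q(T)$, and your $T$ may fail to be a basis.

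The repair is immediate, and it is precisely what the paper does: having shown that $T$ is dense in $K$ and algebraically independent over $\Q$, extend $T$ (by Zorn's lemma) to a transcendence basis $T^* \supseteq T$ of $K$. Since $T \subseteq T^*$ and $T$ is dense in $K$, the set $T^*$ is dense in $K$ as well, and it is the required dense transcendence basis. With this one-line correction your argument is complete and coincides in essence with the paper's proof, differing only in that you prove the interval lemma directly rather than citing it.
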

	
	\begin{proof}
		Let $(I_n)_{n\in\N}$ be an enumeration of all intervals $(a,b) \subseteq K$. We construct a transcendence basis $\{t_1,t_2,\ldots\}$ of $K$ over $\Q$ such that $t_k \in I_k$ for any $k$.
		Let $t_1 \in I_1$ be an arbitrary element transcendental over $\Q$. Suppose that $t_1,\ldots,t_{n}$ have already been chosen for some $n$. Assume, for a contradiciton, that all elements in $I_{n+1}$ were algebraic over $\Q(t_1,\ldots,t_n)$. Then by \cite[Lemma~2.2]{erdos}, also $K$ is algebraic over $\Q(t_1,\ldots,t_n)$. This contradicts $\td(K) = \aleph_0$. Hence, we can choose $t_{n+1} \in I_{n+1}$ which is transcendental over $\Q(t_1,\ldots,t_n)$. Now $T'=\{t_n \mid n \in \N\}$ is a dense subset of $K$ and algebraically independent over $\Q$ and can thus be extended to a dense transcendence basis $T \supseteq T'$ of $K$.
	\end{proof}
	
	\begin{corollary}\thlabel{cor:regularext}
		Let $(K,<)$ be an ordered field with $\td(K) \geq \aleph_0$. Then there is a dense subfield $F$ of $K$ such that $\Q \subseteq F$ is regular.
	\end{corollary}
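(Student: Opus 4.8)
The plan is to build $F$ as the field generated over $\Q$ by a \emph{dense transcendence basis} of $K$, and then to exploit the fact that a purely transcendental extension of any field is regular. The two existence results already established, \Autoref{lem:lemmaerdos} and \Autoref{prop:lemmaerdos}, do all the heavy lifting; the remainder is a standard algebraic observation.

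First I would split into two cases according to the cardinality of $K$. If $K$ is uncountable, then \Autoref{lem:lemmaerdos} directly supplies a dense transcendence basis $T$ of $K$ (no hypothesis on $\td(K)$ is even needed). If $K$ is countable, then $\td(K) \leq |K| = \aleph_0$, so the hypothesis $\td(K) \geq \aleph_0$ forces $\td(K) = \aleph_0$, and \Autoref{prop:lemmaerdos} again yields a dense transcendence basis $T$. In either case I obtain a set $T \subseteq K$ that is algebraically independent over $\Q$, is a transcendence basis of $K$, and is dense in $K$.

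Next I would set $F = \Q(T)$, the subfield of $K$ generated by $T$ over $\Q$. Since $T \subseteq F$ and $T$ is dense in $K$, the subfield $F$ is itself dense in $K$. It then remains to verify that $\Q \subseteq F$ is regular, i.e.\ that $\Q$ is relatively algebraically closed in $F$ (separability being automatic in characteristic $0$). Because $T$ is algebraically independent over $\Q$, the extension $F = \Q(T)$ is purely transcendental, and the base field of a field of rational functions is algebraically closed inside that field; hence no element of $F$ algebraic over $\Q$ lies outside $\Q$, which is exactly regularity.

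The only step that requires genuine care is this last one, the regularity of $\Q(T)$ over $\Q$: one must confirm that no nontrivial algebraic element over $\Q$ can appear in a purely transcendental extension. This is a classical fact about rational function fields, so I do not expect it to pose a real obstacle, and everything else is an immediate application of the cited facts on dense transcendence bases. Combining the three observations gives a dense subfield $F$ of $K$ with $\Q \subseteq F$ regular, as required.
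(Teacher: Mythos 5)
Your proof is correct and takes essentially the same approach as the paper: the paper's own argument also invokes \Autoref{lem:lemmaerdos} and \Autoref{prop:lemmaerdos} to obtain a dense transcendence basis $T$ and then sets $F = \Q(T)$, noting that $F$ is dense in $K$ and that $\Q \subseteq F$ is regular. You merely make explicit two points the paper leaves implicit — the case split between $K$ uncountable and $K$ countable with $\td(K) = \aleph_0$, and the classical fact that the base field is relatively algebraically closed in a purely transcendental extension — both of which are correct.
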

	
	\begin{proof}
		By \Autoref{lem:lemmaerdos} and  \Autoref{prop:lemmaerdos}, $K$ has a dense transcendence basis $T$. Then $F=\Q(T)$ is dense in $K$ and $\Q \subseteq F$ is regular.
	\end{proof}

	Note that the conclusion of \Autoref{cor:regularext} also holds for ordered fields of transcendence degree $0$. Indeed, if $(K,<)$ is an ordered field with $\td(K) = 0$, then $K$ is algebraic over $\Q$ and thus archimedean. We can then set $F = \Q$. Thus, the question becomes whether the same conclusion can be made for non-archimedean ordered fields of non-zero finite transcendence degree (see \Autoref{qu:denseregular}).
	
	As a final result of this section, we obtain the following characterisation of ordered fields dense in their real closure in terms of dense transcendence bases.
	
	\begin{corollary}\thlabel{cor:lemmaerdos}
		Let $(K,<)$ be an ordered field. Suppose that $\td(K) \geq \aleph_0$. Then $K$ is dense in $\rc{K}$ if and only if $K$ has a transcendence basis $T$ which is dense in $\rc{K}$.
	\end{corollary}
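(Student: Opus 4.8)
The plan is to reduce both implications to the density results already available, the key elementary tool being the transitivity of density for subsets. The backward implication is immediate: if $T$ is a transcendence basis of $K$ which is dense in $\rc{K}$, then since $T \subseteq K$, every nonempty open interval of $\rc{K}$ already meets $K$, so $K$ is dense in $\rc{K}$.

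For the forward implication, suppose that $K$ is dense in $\rc{K}$. First I would record the following transitivity principle: if a subset $T \subseteq K$ is dense in $K$ and $K$ is dense in $\rc{K}$, then $T$ is dense in $\rc{K}$. Indeed, given $a,b \in \rc{K}$ with $a < b$, density of $K$ in $\rc{K}$ produces $c,d \in K$ with $a < c < d < b$ (pick $c \in K$ with $a < c < b$, then $d \in K$ with $c < d < b$), and density of $T$ in $K$ then yields $t \in T$ with $c < t < d$, whence $a < t < b$. The only point requiring attention here is that one must first pass to a strictly smaller interval $(c,d)$ with endpoints in $K$ before invoking density of $T$ in $K$.

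Next I would produce a transcendence basis of $K$ which is dense in $K$, and this is exactly where the hypothesis $\td(K) \geq \aleph_0$ enters, through a case split on cardinality. If $K$ is uncountable, then \Autoref{lem:lemmaerdos} furnishes a transcendence basis dense in $K$. If $K$ is countable, then $\td(K) \leq \aleph_0$, so together with $\td(K) \geq \aleph_0$ we obtain $\td(K) = \aleph_0$ and \Autoref{prop:lemmaerdos} applies. In either case $K$ admits a transcendence basis $T$ that is dense in $K$. Combining this with the transitivity principle and the hypothesis that $K$ is dense in $\rc{K}$, we conclude that $T$ is dense in $\rc{K}$, which is the desired transcendence basis.

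I expect no serious obstacle in this argument: the substantive work has been absorbed into \Autoref{lem:lemmaerdos} and \Autoref{prop:lemmaerdos}. The proof amounts to correctly organising the cardinality case split so that the appropriate density fact is invoked, together with the routine transitivity computation above.
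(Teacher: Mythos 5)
Your proof is correct and follows essentially the same route as the paper: the backward direction is immediate from $T \subseteq K$, and the forward direction combines \Autoref{lem:lemmaerdos} (uncountable case) and \Autoref{prop:lemmaerdos} (countable case, where $\td(K)=\aleph_0$ is forced) to get a transcendence basis dense in $K$, then transfers density to $\rc{K}$. The only difference is that you spell out the transitivity-of-density step, which the paper leaves implicit.
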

	
	\begin{proof}
		Note that any transcendence basis of $K$ is a transcendence basis of $\rc{K}$. If $K$ has a transcendence basis $T$ which is dense in $\rc{K}$, then $K$ has a proper subset dense in $\rc{K}$ and thus $K$ itself is dense in $\rc{K}$. Conversely, suppose that $K$ is dense in $\rc{K}$. By \Autoref{lem:lemmaerdos} and  \Autoref{prop:lemmaerdos}, $K$ has a transcendence basis $T$ which is dense in $K$. Since $K$ is dense in $\rc{K}$, also $T$ is dense in $\rc{K}$.
	\end{proof}

	\section{Definable Convex Valuations}\label{sec:defconvval}
	
	In this section, we firstly investigate what convex valuations are $\Lor$-de\-fin\-a\-ble in ordered fields and secondly we compare our results to known $\Lr$-definability results of henselian valuations with special focus on almost real closed fields.

	\subsection{$\Lor$-definability}\label{subsec:def}
	
	We are going to analyse the construction method of $\Lor$-definable convex valuations from \cite[Proposition~6.5]{jahnke}.
	
	\begin{fact}\thlabel{prop:defval}
		Let $(K,<)$ be an ordered field. Then at least one of the following holds.
		\begin{enumerate}[wide, labelwidth=!, labelindent=6pt]
			\item $K$ is dense in its real closure.
			\item $K$ admits a non-trivial $\L_{\mathrm{or}}$-definable convex\footnote{\cite[Proposition~6.5]{jahnke} only states that $K$ admits a non-trivial $\Lor$-definable valuation, but the proof indeed gives a construction method for a non-trivial $\Lor$-definable \emph{convex} valuation.} valuation.
		\end{enumerate}
	\end{fact}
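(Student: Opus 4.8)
The plan is to establish this as a dichotomy paralleling the group-theoretic \Autoref{prop:subgroupdef}: assuming $(K,<)$ is \emph{not} dense in $\rc K$, I would construct a non-trivial $\Lor$-definable convex valuation. First I would dispose of the archimedean case: by \Autoref{thm:densitytransfers2} any archimedean field is dense in its real closure, so I may assume $K$ is non-archimedean. Since $K$ is not dense in $\rc K$, there is some $\alpha \in \rc K \setminus \cl(K)$, and as $\alpha$ is not a limit point of $K$ there is $\epsilon \in (\rc K)^{>0}$ with $(\alpha-\epsilon,\alpha+\epsilon)\cap K = \emptyset$. I would also record two features of the extension $K \subseteq \rc K$: every element of $\rc K$ is integral over $K$, hence bounded in absolute value by an element of $K$, so $K$ is cofinal in $\rc K$; taking reciprocals, $K$ is also coinitial in $(\rc K)^{>0}$.

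The first key step is to make the cut determined by $\alpha$ definable over $K$ in $\Lor$. Because $\rc K$ is real closed and $\dcl(K;\rc K)=\rc K$ (as noted after \Autoref{cor:denserec}), there is an $\Lor$-$\emptyset$-definable function $f$ in $\Trcf$ and a tuple $\ul a \in K$ with $\alpha = f(\ul a)$. By quantifier elimination for $\Trcf$ the relation $z < f(\ul x)$ is equivalent to a quantifier-free formula $\varphi(\ul x,z)$, exactly as in the proof of \Autoref{prop:omindenserec}. Since quantifier-free formulas are absolute between $K$ and its extension $\rc K$, the cut $C = \setbr{x \in K \mid x < \alpha}$ equals $\setbr{x \in K \mid K \models \varphi(\ul a, x)}$ and is therefore $\Lor$-definable with parameters $\ul a$.

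Next I would run the stabiliser construction of \Autoref{prop:subgroupdef} inside the additive group $(K,+,<)$. Set $A = \setbr{y \in K^{\geq 0} \mid \forall x\ (x \in C \to x+y \in C)}$, which is $\Lor$-definable and a convex subsemigroup of $K^{\geq 0}$ containing $0$. It is proper: choosing $x_0 \in C$ and $k \in K$ with $k > \alpha$ (cofinality), the element $y = k - x_0 > 0$ satisfies $x_0 + y = k \notin C$, so $y \notin A$. It is non-trivial: by coinitiality there is $y \in K$ with $0 < y < \epsilon$, and then for any $x \in C$ one cannot have $x + y \geq \alpha$, as this would place $x$ in $(\alpha-\epsilon,\alpha)\cap K = \emptyset$; hence $C + y \subseteq C$, i.e.\ $y \in A$. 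Thus $H = A \cup (-A)$ is a proper non-trivial $\Lor$-definable convex subgroup of $(K,+)$, precisely as in \Autoref{prop:subgroupdef}.

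Finally --- and this is the step with no analogue in the group setting, which I expect to be the main obstacle --- I would promote the convex \emph{subgroup} $H$ to a convex valuation \emph{ring}. Let $\OO = \setbr{x \in K \mid xH \subseteq H}$ be its multiplicative stabiliser. One checks directly that $\OO$ is a subring containing $1$ and that it is convex: if $0 \leq y \leq x \in \OO$ and $h \in H^{\geq 0}$, then $0 \leq yh \leq xh \in H$, so $yh \in H$ by convexity of $H$. A convex subring of an ordered field containing $1$ is automatically a valuation ring, so $\OO = \OO_w$ for a convex valuation $w$. This $w$ is non-trivial, i.e.\ $\OO \neq K$: picking $h \in H$ with $h > 0$, the equality $\OO = K$ would force $K = Kh \subseteq H$, contradicting $H \subsetneq K$. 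Hence $w$ is a non-trivial $\Lor$-definable convex valuation, completing the construction. The two points needing care are the definability of the cut $C$, where the hypothesis on $\rc K$ genuinely enters through quantifier elimination and absoluteness, and this last passage from a purely additive convex subgroup to a multiplicatively closed valuation ring.
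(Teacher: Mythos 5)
Your proof is correct and takes essentially the same route as the paper: your sets $C$, $A$ and $\OO$ are precisely the sets $D_s$, $A_s$ and $\OO_s$ of \Autoref{constr:val}, the paper's summary of the construction behind this fact, including the final passage from the additive convex subgroup to its multiplicative stabiliser as the convex valuation ring. The only detail you add is the explicit justification, via $\dcl(K;\rc{K})=\rc{K}$ and quantifier elimination for $\Trcf$, that the cut set is $\Lor$-definable over $K$, a point left implicit in \Autoref{constr:val}.
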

	
	Note that a real closed field $(K,<)$ does not admit any non-trivial $\Lor$-definable convex valuation, as by o-minimality, any $\Lor$-definable convex subset of $K$ must be an interval with endpoints in $K \cup \{\pm \infty\}$.
	We will summarise the construction procedure of a non-trivial $\L_{\mathrm{or}}$-definable convex valuation ring of an ordered field which is not dense in its real closure given in \cite[page~163~f.]{jahnke}. Recall that for an ordered field $(K,<)$, we denote its topological closure in $\rc{K}$ under the order topology by $\cl(K)$.
	
	\begin{construction}\thlabel{constr:val}
		Let $(K,<)$ be an ordered field. Suppose that $K$ is not dense in $R=\rc{K}$. Let $s\in R\setminus \cl(K)$. Set $D_s:=\{z\in K \mid z < s\}$ and $A_s := \{x \in K^{\geq0} \mid x+D_s\subseteq D_s\}$. Set $\OO_s := \{x \in K \mid |x|A_s \subseteq A_s\}$. Then $\OO_s$ is a non-trivial $\Lor$-definable convex valuation ring of $K$.
	\end{construction}
	

	
	\begin{theorem}\thlabel{lem:defvalnew}\thlabel{thm:defval}
		Let $(K,<)$ be an ordered field and $v$ a henselian valuation on $K$. Suppose that at least one of the following holds.
		
		\begin{enumerate}[wide, labelwidth=!, labelindent=6pt]
			\item\thlabel{lem:defvalnew:1} $vK$ is discretely ordered.
			
			\item\thlabel{lem:defvalnew:2} $vK$ has a limit point in $\div{vK}\setminus vK$.
			
			\item\thlabel{lem:defvalnew:3} $Kv$ has a limit point in $\rc{Kv}\setminus Kv$.
		\end{enumerate}
		Then $v$ is $\Lor$-definable in $K$. Moreover, in the cases (\ref{lem:defvalnew:1}) and (\ref{lem:defvalnew:2}), $v$ is definable by an $\Lor$-formula with one parameter.
	\end{theorem}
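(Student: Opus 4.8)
The plan is to apply \Autoref{constr:val} in all three cases: I will exhibit an element $s\in R:=\rc{K}$ lying in $R\setminus\cl(K)$ and check that the convex valuation ring $\OO_s$ produced by the construction is \emph{exactly} $\OO_v$, so that the $\Lor$-definability of $\OO_s$ transfers to $v$. Throughout I use that a henselian $v$ is convex and that it extends to a henselian valuation on $R$ with value group $\div{vK}$ and residue field $\rc{Kv}$. The mechanism is always the same: $\OO_s$ is the multiplicative stabiliser of the additive set $A_s=\{x\in K^{\ge0}:x+D_s\subseteq D_s\}$, which measures the width of the gap that $s$ cuts into $K$; the point is that each of the three hypotheses forces this gap to sit exactly at the level of $\MM_v$. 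The main obstacle will be precisely the verification $\OO_s=\OO_v$ (the reverse inclusion $\OO_s\subseteq\OO_v$), where the three hypotheses are used to prevent $\OO_s$ from being a strictly coarser valuation.

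For case (\ref{lem:defvalnew:1}), let $\gamma_0$ be the least positive element of $vK$ and pick $\pi\in K$ with $v(\pi)=\gamma_0$. Since $\gamma_0\notin 2vK$, the element $s:=\sqrt{\pi}\in R$ has $v(s)=\gamma_0/2\notin vK$, so $s\notin K$; as no $k\in K$ has $v(k-s)>\gamma_0/2$, the point $s$ is not a limit point of $K$, whence $s\in R\setminus\cl(K)$, and $D_s=\{z\in K:z\le0\ \vee\ z^2<\pi\}$ is $\Lor$-definable over $K$ with the single parameter $\pi$. Using that $\gamma_0$ is least positive one gets $A_s=\{x\in K^{\ge0}:v(x)>\gamma_0/2\}=\{x\in K^{\ge0}:v(x)>0\}$, and hence $\OO_s=\OO_v$. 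For case (\ref{lem:defvalnew:2}), after replacing a left-sided limit point $\delta$ by $-\delta$ (which is then right-sided, since $vK=-vK$), I may assume $vK$ has a right-sided limit point $\delta\in\div{vK}\setminus vK$. Choosing $N\in\N$ and $a\in K^{>0}$ with $v(a)=N\delta$ and setting $s:=\sqrt[N]{a}$ gives $v(s)=\delta\notin vK$, so again $s\in R\setminus\cl(K)$, and $D_s=\{z\in K:z\le0\ \vee\ z^N<a\}$ is $\Lor$-definable with the single parameter $a$. One computes $A_s\cup(-A_s)=\{x\in K:v(x)>\delta\}$; its multiplicative stabiliser is $\{x\in K:v(x)+\gamma>\delta\text{ for all }\gamma\in vK\text{ with }\gamma>\delta\}$, and because $\delta$ is approached from above this equals $\{x:v(x)\ge0\}=\OO_v$. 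In both cases the single parameter ($\pi$, respectively $a$) yields the one-parameter definability asserted.

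For case (\ref{lem:defvalnew:3}), let $s_0\in\rc{Kv}\setminus Kv$ be a limit point of $Kv$. Using henselianity of $v$ on $R$, lift $s_0$ to some $\hat s\in R$ with $v(\hat s)=0$ and residue $s_0$. Since $s_0\notin Kv$, no $k\in K$ satisfies $v(k-\hat s)>0$, so the valuation ball around $\hat s$ of positive radius avoids $K$ and $\hat s\in R\setminus\cl(K)$. Because $s_0$ is a limit point of $Kv$, there are $k\in K$ with $\ol{k}$ arbitrarily close to $s_0$, so the positive distances $\hat s-k$ are $v$-units whose residues tend to $0$; this forces $A_{\hat s}=\{x\in K^{\ge0}:v(x)>0\}$ and therefore $\OO_{\hat s}=\OO_v$. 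Here the definition uses the coefficients of the minimal polynomial of $\hat s$ over $K$ as parameters, which is why no single-parameter claim is made in this case.

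The only delicate point is the identification $\OO_s=\OO_v$, and within it the stabiliser computation: the inclusion $\OO_v\subseteq\OO_s$ is routine, whereas the reverse inclusion is exactly where discreteness (in (\ref{lem:defvalnew:1})), approachability of $\delta$ from above (in (\ref{lem:defvalnew:2})), and the limit-point property of $s_0$ (in (\ref{lem:defvalnew:3})) are needed to rule out that $\OO_s$ is a proper coarsening of $\OO_v$ — indeed, an isolated $\delta$ in (\ref{lem:defvalnew:2}) would admit elements of small negative value in the stabiliser and yield a strictly coarser ring. Accordingly I would first prove a short lemma computing $A_s$ and then $\{x\in K:|x|A_s\subseteq A_s\}$ for a gap element $s$ of prescribed level, and only afterwards feed in the three explicit choices of $s$ above.
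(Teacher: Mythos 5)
Your proposal takes a genuinely different route from the paper's proof. The paper never works inside $K$ itself: it invokes the Ax--Kochen--Ershov principle (\Autoref{fact:ake}) to replace $(K,<,v)$ by the Hahn field $(Kv\pow{vK},<,\vmin)$, runs the construction of \Autoref{constr:val} explicitly on series (where $D_s$, $A_s$ and the stabiliser can be computed coefficient by coefficient), and then pulls the defining formula back to $K$ by elementary equivalence, so the parameter $\ul{b}\in K$ arises non-constructively. You instead produce the gap element $s\in\rc{K}\setminus\cl(K)$ directly over $K$ (namely $\sqrt{\pi}$, $a^{1/N}$, or a lift of $s_0$) and verify $\OO_s=\OO_v$ by hand. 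This buys two things: the defining parameters are explicit elements of $K$, and henselianity is never genuinely used --- only convexity of $v$ and of its unique extension to $\rc{K}$ (with value group $\div{vK}$ and residue field $\rc{Kv}$) enters, so your argument in fact proves the conclusion for convex valuations, whereas the paper's route needs henselianity to apply AKE. Your cases (\ref{lem:defvalnew:1}) and (\ref{lem:defvalnew:2}) are correct, including the one-parameter claims and the correct identification of exactly where discreteness, respectively right-sided approachability of $\delta$, is needed in the stabiliser computation.

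Case (\ref{lem:defvalnew:3}), however, has a gap as written. Your computation of $A_{\hat s}$ presupposes that $Kv$ approaches $s_0$ \emph{from below}: the sentence about ``positive distances $\hat s-k$'' requires elements $k\in K$ with $\ol{k}<s_0$ arbitrarily close to $s_0$, whereas the hypothesis only gives a limit point, a priori one-sided and possibly only from above. If $c\in Kv^{>0}$ satisfies $c+Kv^{<s_0}\subseteq Kv^{<s_0}$ (which is exactly what a right-sided-only limit point would permit), then every $x\in K^{\geq 0}$ with $v(x)=0$ and $\ol{x}=c$ lies in $A_{\hat s}$, so $A_{\hat s}\supsetneq \MM_v^{\geq 0}$ and your identification of $\OO_{\hat s}$ with $\OO_v$ breaks down. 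The repair is the same one-line reduction you already perform in case (\ref{lem:defvalnew:2}), and which the paper performs at the analogous point: replace $s_0$ by $-s_0$ to assume the limit point is left-sided. (Alternatively one can show that limit points in ordered field extensions are automatically two-sided, but that requires an argument you do not give.) A second, degenerate point: hypothesis (\ref{lem:defvalnew:3}) is compatible with $v$ being trivial, and then your construction fails outright, since $\hat s=s_0$ \emph{is} a limit point of $K=Kv$, so $\hat s\in\cl(K)$ and \Autoref{constr:val} is not applicable; this case must be split off (it is harmless, as the trivial valuation ring $\OO_v=K$ is defined by $x=x$). With these two patches your proof is complete.
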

	
	\begin{proof}
		Let $k=Kv$ and $G=vK$. 
		Note that $K$ is not real closed, as in each case either $k$ is not real closed or $G$ is non-divisible. 
		By \Autoref{fact:ake}, $(K,\linebreak   <, v) \equiv (k\pow{G},<, \vmin)$. Let $L = k\pow{G}$, $w = \vmin$ and $R = \rc{L}$.
		We will apply \Autoref{constr:val} with some simplifications to define the valuation ring $k\pow{G^{\geq 0}}$ of $w$ in $L$. \Autoref{prop:hahnclosed} shows that we can apply the construction procedure to any element in $s \in  R \setminus L$.
		
		First suppose that $G$ is non-divisible. Let $g_0 \in \div{G}\setminus G$ and $s = t^{g_0}$.
		Consider the $\Lor$-definable set $D_s' = \setbr{x \in L^{\geq 0} \mid x < t^{g_0}}$. Since $g_0 \in \div{G}$, there is some $h \in G$ and $N \in \N$ such that $g_0 = \frac{h}{N}$. Thus, the set $D_s'$ is defined by the $\Lor$-formula with one parameter $$x \geq 0 \wedge x^N < t^h.$$ Note that for any $x \in L^{\geq0}$, we have $x \in D_s'$ if and only if $w(x) > g_0$. Thus, $D_s' = k\pow{G^{>g_0}}^{\geq0}$. 
		Let $\OO_s = \{x \in L \mid |x|D_s' \subseteq D_s'\}$. Note that this set is $\Lor$-definable with one parameter. By definition, $\OO_s$ contains exactly those elements in $L$ such that for any $y\in L^{\geq 0}$ with $w(y) > g_0$ we have \begin{align}w(x)+w(y) = w(xy) > g_0. \label{eq:cond1}\end{align} In particular, for any $x \in L$ with $w(x) \geq 0$, condition \eqref{eq:cond1} holds. Thus, $k\pow{G^{\geq0}} \subseteq \OO_s$. To show the other set inclusion, we will make a case distinction, also specifying the element $g_0$ for the densely ordered case.
		
		Suppose that $G$ is discretely ordered. Let $g_1 \in G$ be the least element greater than $g_0$ and let $g_2 \in G$ be the least element greater than $g_0-g_1$. Then $g_2+g_1$ is the least element greater than $g_0$. By choice of $g_1$, this gives us $g_2+g_1 = g_1$ and thus $g_2 = 0$.
		Let $x \in \OO_s$. Since $t^{g_1} \in D_s'$, we have $w(xt^{g_1}) = w(x) + g_1 > g_0$. Hence, $w(x) > g_0 - g_1$. By choice of $g_2$ as the least element greater than $g_0 - g_1$, we obtain $w(x) \geq g_2 = 0$. This implies $\OO_s \subseteq k\pow{G^{\geq0}}$, as required.
		
		Suppose that $G$ has a limit point in $\div{G}\setminus G$. In this case, we choose $g_0 \in \div{G}\setminus G$ such that $g_0$ is a limit point of $G$. We may assume that $g_0$ is a right-sided limit point, as otherwise we can replace it by $-g_0$. Let $x \in L \setminus k\pow{G^{\geq0}}$, i.e.\ $w(x) < 0$. Since $g_0$ is a right-sided limit point of $G$ in $\div{G}$, the interval $(g_0, g_0 - w(x))\subseteq \div{G}$ contains some element $g_1 \in G$. Thus, $g_1 > g_0$ but $w(x) + w(t^{g_1}) = w(x) + g_1 < g_0$. This shows that $x$ does not fulfil condition \eqref{eq:cond1}, whence $x \notin \OO_s$. We thus obtain $\OO_s \subseteq k\pow{G^{\geq0}}$.
		
		Now suppose that $k$ is not real closed and has a limit point $a$ in $\rc{k}\setminus k$. We may assume that $a$ is a left-sided limit point, as otherwise we can replace it by $-a$. Then $D_a' = \{x \in L \mid a- 1 < x < a\}$ consists exactly of the elements of the form $b + r$, where $b\in k$ such that $ a-1 < b < a$ and $r \in k\pow{G^{>0}}$. In other words, $D_a' = I + k\pow{G^{>0}}$, where $I$ is the convex set $(a-1,a)$ in $k$. Note that $I$ is non-empty, as $a$ is a left-sided limit point of $k$. Let $A_a'$ be the $\Lor$-definable set $\setbr{x \in L^{\geq 0} \mid x + D_a' \subseteq D_a'}$. Since $k\pow{G^{>0}}$ is closed under addition, we have $k\pow{G^{>0}} + D_a' \subseteq D_a'$. Thus, $k\pow{G^{>0}}^{\geq 0} \subseteq A_a'$. For the other inclusion, let $x \in L^{\geq0} \setminus k\pow{G^{>0}}$, i.e.\ $w(x) \leq 0$ and $x \geq 0$. If $w(x) > 0$, then $x + b \notin D_a'$ for any $b \in I$. Thus, $x \notin A_a'$. Suppose that $w(x) = 0$. Then $x$ is of the form $c + r$ with $c \in k^{>0}$ and $r \in k\pow{G^{>0}}$. If $c \geq 1$, then $x + b \notin D_a'$ for any $b \in I$, whence $x \notin A_a'$. If $c < 1$, let $b \in k\cap (a-c,a)$, which exists, as $a$ is a left-sided limit point of $k$. Then $x + b = (c+b) + r > a + r$. Thus, $x + b \notin D_a'$ and $x \notin A_a'$. 
		Hence, we have shown that $A_a' \subseteq k\pow{G^{>0}}^{\geq0}$. 
		
		Now $(-A_a' \cup A_a') = k\pow{G^{>0}}$ is the maximal ideal of the valuation ring $k\pow{G^{\geq 0}}$. Thus, the valuation ring $k\pow{G^{\geq 0}} = \{x \in L \mid x(-A_a' \cup A_a') \subseteq (-A_a' \cup A_a')\}$ is $\Lor$-definable.
		
		Now for any of the three cases, there is an $\Lor$-formula $\varphi(x,\ul{y})$ (where in the cases (\ref{lem:defvalnew:1}) and (\ref{lem:defvalnew:2}) $\ul{y}$ is just one free variable) such that $(L,<,w) \models \exists \ul{y}\forall x \ (\varphi(x,\ul{y}) \leftrightarrow w(x) \geq 0)$. By elementary equivalence, there is some $\ul{b} \in K$ such that $(K,<,v) \models \forall x \ (\varphi(x,\ul{b}) \leftrightarrow v(x) \geq 0)$. In other words, $\varphi(x,\ul{b})$ defines $v$ in $K$, as required.
	\end{proof}
	
	\Autoref{lem:defvalnew} is not a full characterisation of all $\Lor$-definable henselian valuations on an ordered field. Indeed, we can choose $K = \R\pow{G}$ for $G$ as in \Autoref{ex:conversedivclosed}. Then $\vmin = v_0$ (see page~\pageref{def:v0}) satisfies neither of the conditions of \Autoref{lem:defvalnew}, but $v_0$ is even $\Lr$-definable by \Autoref{fact:thm44} for $p=2$. Moreover, not every ordered henselian valued field which is not dense in its real closure satisfies one of the conditions of \Autoref{thm:defval}. For instance, $(\R\pow{G},<,\vmin)$, where $G$ is as in \Autoref{prop:hahnnoarch}, is not dense in its real closure by \Autoref{prop:hahnclosed} and does not satisfy any of the conditions of  \Autoref{thm:defval}. 
	
	\begin{corollary}\thlabel{cor:kdensedef}
		Let $(K,<)$ be an ordered field and $v$ a henselian valuation on $K$. Suppose that $Kv$ is not real closed but dense in $\rc{Kv}$.
		Then $v$ is $\Lor$-definable in $K$.
	\end{corollary}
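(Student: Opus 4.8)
The plan is to reduce the statement to case~(\ref{lem:defvalnew:3}) of \Autoref{thm:defval}, which guarantees that $v$ is $\Lor$-definable in $K$ as soon as $Kv$ has a limit point in $\rc{Kv} \setminus Kv$. Thus the only thing to verify is that the two hypotheses of the corollary---that $Kv$ is not real closed and that $Kv$ is dense in $\rc{Kv}$---together force this limit-point condition; once that is done, the conclusion is immediate from \Autoref{thm:defval}.

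Abbreviate $k = Kv$. First I would use that $k$ is not real closed to fix some $\alpha \in \rc{k} \setminus k$, which is possible precisely because $\rc{k} \setminus k \neq \emptyset$. Next I would check that this $\alpha$ is a limit point of $k$ in $\rc{k}$. Since $k$ is dense in $\rc{k}$, for every $\varepsilon \in (\rc{k})^{>0}$ the open interval $(\alpha - \varepsilon, \alpha) \subseteq \rc{k}$ is non-empty and hence meets $k$; by the definition recalled in \Autoref{sec:prelim} (applied to the additive group of $\rc{k}$, whose order topology agrees with the field order topology), this says exactly that $\alpha$ is a left-sided limit point of $k$ in $\rc{k}$. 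In particular $\alpha \in \rc{k} \setminus k$ is a limit point of $k$ in $\rc{k}$, so condition~(\ref{lem:defvalnew:3}) of \Autoref{thm:defval} holds.

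Finally I would invoke \Autoref{thm:defval} to conclude that $v$ is $\Lor$-definable in $K$, completing the proof. There is no substantial obstacle here: all the work is contained in \Autoref{thm:defval}, and the corollary is simply the observation that a proper real-closure extension of a \emph{dense} subfield automatically supplies the required limit point. The only minor point to handle carefully is the bookkeeping of the limit-point definition, namely confirming that density of $k$ in $\rc{k}$ makes \emph{every} element of $\rc{k}$---and in particular every element of $\rc{k} \setminus k$---a left-sided (indeed two-sided) limit point of $k$.
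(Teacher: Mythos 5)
Your proposal is correct and follows the same route as the paper's own proof: the paper likewise deduces the corollary immediately from \Autoref{thm:defval}~(\ref{lem:defvalnew:3}), noting that by density every point of $\rc{Kv}\setminus Kv$ (non-empty since $Kv$ is not real closed) is a limit point of $Kv$ in $\rc{Kv}$. Your version merely spells out the limit-point verification in more detail.
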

	
	\begin{proof}
		This follows immediately from \Autoref{lem:defvalnew}~(\ref{lem:defvalnew:3}), as any point in $\rc{Kv}\setminus Kv$ is a limit point of $Kv$ in $\rc{Kv}$.
	\end{proof}
	
	\begin{remark}\thlabel{rmk:defthm}
		\begin{enumerate}[wide, labelwidth=!, labelindent=6pt]
			\item\thlabel{rmk:defthm:1} Since any archimedean ordered field is dense in its real closure, a special case of \Autoref{cor:kdensedef} is the following: Let $(K,<)$ be a non-archimedean ordered field. Suppose that the natural valuation $\vnat$ is henselian on $K$ and that $(K\vnat,<)$ is not real closed. Then $\vnat$ is $\Lor$-definable in $K$.
			
			\item \Autoref{lem:defvalnew}~(\ref{lem:defvalnew:2}) implies a similar version of \Autoref{cor:kdensedef} if $vK$ is non-divisible and dense in $\div{vK}$. However, we will see in \Autoref{sec:lrdef} that under this condition we already have that $v$ is $\Lr$-definability without parameters.
			
			\item The proof of \Autoref{thm:defval} also shows the following:	Let $(K,<)$ be an ordered field and $v$ a non-trivial henselian valuation on $K$. Suppose that $vK$ is non-divisible. Then there exists a non-trivial $\Lor$-definable coarsening of $v$ which is definable by an $\Lor$-formula with one parameter.
		\end{enumerate}
	\end{remark}
	
	\subsection*{ Comparison to $\Lr$-definability.}
	 We will give a brief account of the known $\Lr$-definability result of henselian valuations in the case that the value group is regular (cf.\ \cite[Theorem~4]{hong}) and compare this to \Autoref{lem:defvalnew}.
	
	\begin{fact}\emph{(See \cite[Theorem~4]{hong}.)}\thlabel{fact:hong}
		Let $K$ be a field and $v$ a henselian valuation on $K$. Suppose that $vK$ is regular and non-divisible. Then $v$ is parameter-free $\Lr$-definable in $K$.
	\end{fact}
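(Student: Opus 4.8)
The plan is to use the single prime witnessing non-divisibility to manufacture a parameter-free $\Lr$-definable \emph{coarsening} of $v$ whose value group still fails to be $p$-divisible, and then to invoke regularity to collapse this coarsening onto $v$ itself. Concretely, since $vK$ is non-divisible, I first fix a prime $p$ with $vK \neq p\cdot vK$. The goal is a parameter-free $\Lr$-formula defining a valuation ring $\OO_w \supseteq \OO_v$ (so that $w$ is a coarsening of $v$, and $wK = vK/\Delta_w$ for some convex subgroup $\Delta_w \leq vK$) with the property that $wK$ is again non-$p$-divisible.

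For the construction of $\OO_w$ I would exploit the interaction between $p$-th powers and henselianity. When $\Char(Kv)\neq p$, Hensel's Lemma applied to $X^p-(1+m)$ for $m \in \MM_v$ (its residue $X^p-1$ having the simple root $1$) yields the inclusion $1+\MM_v \subseteq (K^\times)^p$; thus $(K^\times)^p$ is "coarse'' with respect to $v$, and, combined with the non-$p$-divisibility of $vK$, the standard machinery producing the canonical $p$-henselian valuation (following Koenigsmann and Jahnke--Koenigsmann) yields a parameter-free $\Lr$-definable coarsening $w$ of $v$ whose value group detects the $p$-divisibility defect and is therefore non-$p$-divisible. That $w$ is genuinely a coarsening of $v$, rather than an unrelated or finer valuation, is precisely the point at which henselianity of $v$ is used, via the comparability theory of $p$-henselian valuations.

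The regularity hypothesis is then exactly what promotes "$v$ has a definable coarsening'' to "$v$ is definable.'' Writing $wK = vK/\Delta_w$ with $\Delta_w$ a convex subgroup of $vK$, the third characterisation in \Autoref{fact:defregular} says that regularity of $vK$ forces $vK/\Delta$ to be divisible for \emph{every} non-trivial convex subgroup $\Delta$. Since $wK$ is non-$p$-divisible, hence non-divisible, we cannot have $\Delta_w \neq \{0\}$; therefore $\Delta_w = \{0\}$ and $\OO_w = \OO_v$, i.e.\ $w = v$. Consequently the parameter-free formula defining $\OO_w$ in fact defines $\OO_v$, which is the assertion.

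The main obstacle is the residue characteristic $p$ case, namely $\Char(Kv)=p$ (equal characteristic $p$, and the mixed-characteristic situation with residue characteristic $p$), which is unavoidable whenever $vK$ turns out to be $q$-divisible for all primes $q\neq\Char(Kv)$. There Kummer theory breaks down: $X^p-(1+m)$ is inseparable over the residue field, so the clean inclusion $1+\MM_v\subseteq (K^\times)^p$ is lost. In equal characteristic $p$ I would replace $p$-th powers by Artin--Schreier classes, using that $\wp(K)=\{x^p-x : x\in K\}$ now plays the role of $(K^\times)^p$ and that henselianity still gives $\MM_v \subseteq \wp(K)$; in mixed characteristic one instead needs a refined Hensel estimate involving $v(p)$ to locate a sufficiently high power $1+\MM_v^{m}\subseteq (K^\times)^p$. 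In every case the overall scheme is the same --- build a definable coarsening detecting non-$p$-divisibility, then annihilate $\Delta_w$ by regularity --- but carrying out the construction uniformly across all residue characteristics, and establishing comparability with $v$ without assuming $\zeta_p\in K$ (which may require passing to $K(\zeta_p)$ and descending), is the genuinely technical heart of the proof.
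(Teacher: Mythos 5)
A preliminary remark: the paper does not prove this statement at all---it quotes it as a Fact from Hong's paper---so I am assessing your argument against what a correct proof must contain, not against a proof in the text.

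Your closing step is correct, and it is indeed the natural role of regularity: if $w$ is a coarsening of $v$, then $wK \cong vK/\Delta_w$ for a convex subgroup $\Delta_w$ of $vK$, and by the third characterisation in \Autoref{fact:defregular} a non-trivial $\Delta_w$ would force $wK$ to be divisible; hence any coarsening of $v$ with non-$p$-divisible value group equals $v$. The genuine gap is the step that is supposed to hand you such a $w$. Comparability theory only makes the canonical $p$-henselian valuation $v_K^p$ \emph{comparable} to $v$; nothing makes it a \emph{coarsening} of $v$, and in general it is strictly finer. Concretely, let $k$ be algebraically closed of characteristic $0$, let $K = k\pow{\Z\oplus\Z}$ (lexicographic order), $u = \vmin$, and let $v$ be the coarsening of $u$ corresponding to the convex subgroup $\{0\}\oplus\Z$, so that $v$ is henselian, $Kv = k\pow{\Z}$, and $vK = \Z$ is regular and non-divisible: Hong's theorem applies to $v$. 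But for every prime $p$, the valuation $u$ is $p$-henselian with $p$-closed residue field $k$, so $v_K^p$ is the coarsest $p$-henselian valuation with $p$-closed residue field; being coarser than $u$ it must be $u$, $v$, or the trivial valuation, and since neither $K$ nor $Kv = k\pow{\Z}$ is $p$-closed, $v_K^p = u$. Thus the machinery you invoke outputs $u$, which is \emph{strictly finer} than $v$ and has non-regular value group $\Z\oplus\Z$; it is not a coarsening of $v$, so your collapse argument never gets started, for any prime $p$.

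What the hard case requires---and what constitutes the actual technical content of Hong's proof---is to define $v$ \emph{as a coarsening of} the definable valuation $v_K^p$. Your regularity argument does identify the target abstractly: the convex subgroup $\Delta = v_K^p(\OO_v^\times)$ of $v_K^pK$ is exactly the largest convex subgroup with non-$p$-divisible quotient (quotients by smaller convex subgroups surject onto $vK$, quotients by strictly larger ones are quotients of $vK$ by non-trivial convex subgroups, hence divisible). But one must then show that $\Delta$ is $\Log$-definable, parameter-free, in the ordered abelian group $v_K^pK$, and pull this back to $K$ through the interpretation of the value group as $K^\times/\OO_{v_K^p}^\times$; compare \Autoref{fact:thm44}, where precisely this kind of condition ($\Log$-definability of a convex subgroup in the value group) is the criterion for $\Lr$-definability. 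In the example above this amounts to defining $\{0\}\oplus\Z$ inside $\Z\oplus\Z$ without parameters, which is possible but requires a real argument, and nothing in your sketch addresses this layer. The difficulties you do flag (residue characteristic $p$, mixed characteristic, $\zeta_p \notin K$ and descent from $K(\zeta_p)$) are legitimate, but they are routine by comparison and orthogonal to this structural gap.
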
\label{page:hong}

	By \Autoref{prop:regimpliesdense}, we obtain the following.

	\begin{corollary}\thlabel{cor:hong}
		Let $(K,<)$ be an ordered field and $v$ a henselian valuation on $K$ such that $vK$ is non-divisible but dense in $\div{vK}$. Then $v$ is parameter-free $\Lr$-definable in $K$.
	\end{corollary}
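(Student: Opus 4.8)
The plan is to deduce this statement as an immediate combination of two results already available in the excerpt: the algebraic characterisation in \Autoref{prop:regimpliesdense} and Hong's $\Lr$-definability criterion recorded in \Autoref{fact:hong}. The whole point is that the topological hypothesis ``$vK$ is dense in $\div{vK}$'' is exactly the geometric reformulation of the group-theoretic hypothesis ``$vK$ is regular (and densely ordered)'' that Hong's theorem requires.

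First I would set $G = vK$ and invoke \Autoref{prop:regimpliesdense}: since $G$ is by assumption dense in $\div{G}$, that fact tells us $G$ is regular (and densely ordered). We therefore have both hypotheses of \Autoref{fact:hong} at hand, namely that $vK$ is regular and that $vK$ is non-divisible (the latter being assumed directly in the statement). Applying \Autoref{fact:hong} then yields that $v$ is parameter-free $\Lr$-definable in $K$, which is exactly the conclusion.

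There is no genuine obstacle here: the content of the corollary lies entirely in the two cited facts, and the ordered-field setting plays no role beyond supplying the ordering that makes $vK$ an ordered abelian group to which \Autoref{prop:regimpliesdense} applies. The one point worth flagging is that density of $G$ in $\div{G}$ delivers regularity \emph{plus} dense orderedness, but only regularity is needed to feed into \Autoref{fact:hong}, so the argument does not even use the full strength of \Autoref{prop:regimpliesdense}. I would write the proof in a single short line invoking these two references in sequence.
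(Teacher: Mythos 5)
Your proposal is correct and matches the paper's own argument exactly: the corollary is stated immediately after \Autoref{fact:hong} with the remark ``By \Autoref{prop:regimpliesdense}, we obtain the following,'' i.e.\ density of $vK$ in $\div{vK}$ gives regularity via \Autoref{prop:regimpliesdense}, and then Hong's criterion \Autoref{fact:hong} (regular plus non-divisible) yields parameter-free $\Lr$-definability. Your additional observation that dense orderedness is not needed for this direction is accurate and does not affect the argument.
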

	
	\Autoref{ex:regnew}~(\ref{ex:regnew:1}) shows that there are discretely ordered abelian groups which are not regular. \Autoref{ex:regnew}~(\ref{ex:regnew:2}) exhibits a densely ordered abelian group $G$ which is not regular but has limit points in $\div{G} \setminus G$. This shows that there are ordered fields such that the cases (\ref{lem:defvalnew:1}) and (\ref{lem:defvalnew:2}) of \Autoref{lem:defvalnew} are not already covered by \Autoref{fact:hong}. 
	
	\subsection{Almost real closed fields}\label{sec:arc}\label{sec:lrdef} 
	
	The class of almost real closed fields has first been studied systematically with respect to algebraic and model theoretic properties in \cite{delon}. Moreover,  \cite[Theorem~4.4]{delon} completely characterises all $\Lr$-definable henselian valuations in almost real closed fields. In the following, we will compare $\Lr$- and $\Lor$-definability of henselian valuations in almost real closed fields.
	
	\begin{definition} \thlabel{def:arc}
		Let $(K,<)$ be an ordered field, $G$ an ordered abelian group and $v$ a henselian valuation on $K$. We call $K$ an \textbf{almost real closed field (with respect to $v$ and $G$)} if $Kv$ is real closed and $vK = G$. 
	\end{definition}
	
	Depending on the context, we may simply say that $(K,<)$ is an almost real closed field without specifying the henselian valuation $v$ or the ordered abelian group $G = vK$. 
	In \cite{delon}, almost real closed fields are considered as pure fields, i.e.\ as structures in the language $\Lr$. Note that due to the Baer--Krull Representation Theorem (cf.\ \cite[page~37~f.]{engler}), any such field admits (possibly several distinct) orderings. We, however, consider almost real closed fields as ordered fields with a fixed order.
	By \cite[Proposition~2.9]{delon}, any convex valuation on an almost real closed field is already henselian. We thus make no distinction between convex and henselian valuations on almost real closed fields.

	



	Let $(K,<)$ be an almost real closed field. We denote by $V(K)$ the set of all henselian valuations on $K$ with real closed residue field and {  by $v_0$\label{def:v0} the minimum of $V(K)$, i.e.\ the coarsest valuation in $V(K)$. This exists by \cite[Proposition~2.1~(iv)]{delon}. Moreover, \cite[Proposition~2.1~(ii)]{delon} implies that $\vnat$ is the maximum of $V(K)$, as the natural valuation is the finest convex and thus henselian valuation on $K$.}
	By the remarks in \cite[page~1147~f.]{delon}, $v_0$ is the only possible $\Lr$-definable henselian valuation in $V(K)$. Also in the language $\Lor$, there is at most one definable valuation in $V(K)$.
	
	\begin{proposition}\thlabel{prop:rcuniquedef}
		Let $(K,<)$ be an almost real closed field and $v \in V(K)$. Suppose that $v$ is $\Lor$-definable in $K$. Then $v$ is the only $\Lor$-definable valuation in $V(K)$. 
	\end{proposition}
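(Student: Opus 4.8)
The plan is to show that any $\Lor$-definable $w \in V(K)$ must coincide with $v$, and the mechanism is to push $w$ down to a residue field on which $v$ has trivial residue and where o-minimality forbids a definable convex valuation. First I would use that the convex valuations on an ordered field are linearly ordered by coarseness (their valuation rings form a chain of convex subrings of $K$), so in particular $v$ and $w$ are comparable; by symmetry I may assume $v \leq w$, i.e.\ $\OO_w \subseteq \OO_v$ and $\MM_v \subseteq \MM_w \subseteq \OO_w$. Since $v \in V(K)$, the residue field $(Kv,<)$ is real closed. Because $w$ is finer than $v$, it induces a convex valuation $\dot w$ on $Kv$ whose valuation ring is the image $\OO_{\dot w} = \OO_w / \MM_v = \setbr{\ol x \mid x \in \OO_w}$ and whose residue field is $Kw$; here $\dot w$ is non-trivial precisely when $\OO_w \subsetneq \OO_v$, that is, when $v \neq w$. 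Thus it suffices to prove that $\dot w$ is the trivial valuation on $Kv$.

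To do this I would transfer definability to the residue field and then invoke o-minimality. Since $v$ is $\Lor$-definable, the ordered field $(Kv,<)$ is interpretable in $(K,<)$: its domain is $\OO_v$ modulo the $\Lor$-definable relation $x - y \in \MM_v$, with the induced field operations and the induced order $\ol a < \ol b :\Leftrightarrow \ol a \neq \ol b \wedge a < b$. As $\OO_w$ is $\Lor$-definable in $K$ and contains $\MM_v$, its image $\OO_{\dot w}$ is a definable set in this interpretation, and hence $\dot w$ is $\Lor$-definable in $(Kv,<)$. But a real closed field admits no non-trivial $\Lor$-definable convex valuation (by o-minimality, as observed immediately after \Autoref{prop:defval}), so $\dot w$ must be trivial, forcing $v = w$ and establishing uniqueness.

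The main obstacle is precisely the definability transfer of the previous paragraph: one must guarantee that the $\Lor$-definability of $\OO_w$ in $(K,<)$ really yields $\Lor$-definability of the induced ring $\OO_{\dot w}$ in the \emph{pure} ordered field $(Kv,<)$, and not merely in the potentially richer structure that the interpretation induces on $Kv$ (whose o-minimality is not automatic). I expect this to follow from the purity of the real closed residue field of the ordered henselian valued field $(K,<,v)$, which is a standard consequence of the relative quantifier elimination underlying the Ax--Kochen--Ershov principle (\Autoref{fact:ake}); alternatively, one can avoid any appeal to purity by reading off an explicit $(Kv,<)$-formula for $\OO_{\dot w}$ from the concrete shape of the defining formulas produced in \Autoref{constr:val} and \Autoref{thm:defval}, which only use the parameter data already available on the residue field.
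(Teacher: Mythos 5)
Your overall strategy (comparability of convex valuation rings, WLOG $w$ finer than $v$, pushing $w$ down to a valuation $\dot w$ on the real closed residue field $Kv$, and contradicting o-minimality there) is sound in outline, but the step you yourself flag as the main obstacle is a genuine gap, and neither of your two proposed repairs closes it. The inference ``$\OO_{\dot w}$ is definable in the interpretation of $(Kv,<)$ inside $(K,<)$, hence $\Lor$-definable in the \emph{pure} ordered field $(Kv,<)$'' requires exactly the purity/stable embeddedness of the residue field, and this does \emph{not} follow from \Autoref{fact:ake}: as stated (and as cited from Farr\'e), that fact is only a criterion for elementary equivalence of ordered henselian valued fields; no relative quantifier elimination is stated or invoked anywhere in the paper, and purity of a sort is not a consequence of an elementary-equivalence transfer principle. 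Results of this kind do exist in the literature for equicharacteristic-zero henselian valued fields, so your route could plausibly be completed, but only by importing a theorem strictly stronger than anything the paper provides. Your fallback --- reading off a $(Kv,<)$-formula from \Autoref{constr:val} and \Autoref{thm:defval} --- is not available at all: $w$ is an \emph{arbitrary} $\Lor$-definable member of $V(K)$, defined by an arbitrary formula with arbitrary parameters; nothing forces that formula to have the shape produced by those constructions.

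The paper's own proof avoids this issue entirely by a syntactic transfer instead of a semantic pushdown. By \Autoref{fact:ake}, $(K,<,v)\equiv(\R\pow{vK},<,\vmin)$ as ordered valued fields. If some $\Lor$-formula $\psi(x,\ul{c})$ defined a convex valuation ring strictly contained in $\OO_v$, then ``there exist parameters $\ul{y}$ such that $\psi(x,\ul{y})$ defines a convex valuation ring properly contained in the valuation ring'' is a first-order sentence in the language of ordered valued fields, true in $(K,<,v)$, hence true in $(\R\pow{vK},<,\vmin)$ --- contradicting that $\vmin$ is the finest convex valuation on $\R\pow{vK}$ (its coefficient field $\R$ is archimedean, so $\vmin=\vnat$ there). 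This shows the stronger statement that $v$ is the finest $\Lor$-definable convex valuation on $K$; applying the same argument to any $\Lor$-definable $v'\in V(K)$ and using comparability of convex valuation rings gives $v'=v$. So precisely where you need an unproved purity theorem about induced structure on $Kv$, the paper needs only the elementary equivalence it has already stated; that is the substantive difference, and it is what makes the paper's argument complete while yours, as written, is not.
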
	
	
	\begin{proof}
		By \Autoref{fact:ake}, we have $$(K,<,v) \equiv (\R\pow{vK},<,\vmin).$$ Since $v$ is $\Lor$-definable in $K$, there exists an $\Lor$-formula $\varphi(x,\ul{y})$ such that $$K \models \exists \ul{y} \forall x\  (\varphi(x,\ul{y}) \leftrightarrow v(x) \geq 0).$$ By elementary equivalence, there exists $\ul{b} \in \R\pow{vK}$ such that $$\R\pow{vK} \models \forall x\  (\varphi(x,\ul{b}) \leftrightarrow \vmin(x) \geq 0).$$ Hence, $\vmin$ is $\Lor$-definable in $\R\pow{vK}$. 
		
		Let $\psi(x,\ul{y})$ be an $\Lor$-formula and $\ul{c} \in K$ such that $\psi(x,\ul{c})$ defines a convex valuation $w$ in $K$. Assume, for a contradiction, that $w$ is strictly finer than $v$, i.e.\ $\OO_w \subsetneq \OO_v$. This implies
		$$(K,<,v) \models \forall x \ (\psi(x,\ul{c}) \to v(x) \geq 0) \wedge \exists z\ ( \neg \psi(z,\ul{c}) \wedge v(z) \geq 0 ).$$
		By elementary equivalence, there is some $\ul{c}' \in \R\pow{vK}$ such that
		$\psi(x,\ul{c}')$ defines a convex valuation $w'$ in $\R\pow{vK}$ with $\OO_{w'} \subsetneq \OO_{\vmin}$. This contradicts the fact that $\vmin$ is the finest convex valuation on $\R\pow{vK}$. Hence, $v$ is the finest $\Lor$-definable convex valuation in $K$.

		Let $v' \in V(K)$ be $\Lor$-definable. Arguing as above, $v'$ is the finest $\Lor$-definable convex valuation on $K$. This gives us $v' = v$, as required.
	\end{proof}
	
	\subsection*{Comparison of $\Lr$- and $\Lor$-definability of henselian valuations in almost real closed fields.}
	Let $p$ be a prime number. A valuation $v$ on $K$ is called {$p$-Kummer henselian} if Hensel's Lemma holds for polynomials of the form $x^p-a$ for $a \in \OO_v$. A field $L$ is called {$p$-euclidean} if $L = \pm L^p$. Let $V_p(K)$ be the set of all $p$-Kummer henselian valuations of $K$ with $p$-euclidean residue field. Denote by $v_p$ the minimum of $V_p(K)$ (cf.\ \cite[page~1126]{delon}).
	
	\begin{fact}\thlabel{fact:thm44}\emph{(See \cite[Theorem~4.4]{delon}.)}
		Let $(K,<)$ be an almost real closed field and let $v$ be a henselian valuation on $K$. Then $v$ is $\Lr$-definable in $K$ if and only if { $\vnat(\OO_v\setminus\MM_v)$ is $\Log$-definable in $\vnat K$} and $v \leq v_p$ for some prime $p$. Moreover, $v_0$ is $\Lr$-definable if and only if there is a prime $p$ such that $v_0K$ has no non-trivial convex $p$-divisible subgroup. 
	\end{fact}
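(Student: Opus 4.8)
The plan is to reduce everything to the canonical Hahn field and to the combinatorics of convex subgroups of $\vnat K$. By \Autoref{fact:ake} and the transfer of definability along elementary equivalence used in the proof of \Autoref{prop:rcuniquedef} (together with its $\Lr$-analogue for almost real closed fields), we may replace $K$ by the Hahn field $\R\pow{\Gamma}$ with $\Gamma=\vnat K$ and $\vnat=\vmin$. Since convex and henselian valuations coincide here, they are in order-reversing bijection with convex subgroups $\Delta\subseteq\Gamma$: the valuation $v_\Delta$ attached to $\Delta$ has residue field $\R\pow{\Delta}$ and value group $\Gamma/\Delta$, and $v\leq v'$ iff the corresponding subgroups satisfy $\Delta\supseteq\Delta'$. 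As $\R$ is $p$-euclidean for every $p$, the field $\R\pow{\Delta}$ is real closed (resp.\ $p$-euclidean) iff $\Delta$ is divisible (resp.\ $p$-divisible); hence $v_0$ corresponds to the maximal divisible convex subgroup $\Delta_0$ and $v_p$ to the maximal $p$-divisible convex subgroup $\Delta_p$, with $\Delta_0\subseteq\Delta_p$. Writing $v=v_\Delta$ for $\Delta=\vnat(\OO_v\setminus\MM_v)$, the condition $v\leq v_p$ becomes $\Delta\supseteq\Delta_p$.

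For the backward direction, assume $\Delta$ is $\Log$-definable in $\Gamma$ and $\Delta\supseteq\Delta_p$ for some $p$. First I would define $v_p$ by a parameter-free $\Lr$-formula using the set of signed $p$-th powers: since $\R\pow{\Delta_p}$ is $p$-euclidean, the coarsest valuation with $p$-euclidean residue field is recovered field-theoretically from $\pm K^p$ by a formula of the shape $\OO_{v_p}=\{x : 1+x\cdot(\pm K^p)\subseteq \pm K^p\}$ (this is the classical construction underlying \Autoref{fact:hong}). As $\OO_{v_p}$ is then $\Lr$-definable, the value group $v_pK=\Gamma/\Delta_p$ and the valuation map are interpretable in $K$, so the $\Log$-definable convex subgroup $\Delta/\Delta_p$ of $\Gamma/\Delta_p$ pulls back to an $\Lr$-definable coarsening of $v_p$; this coarsening is exactly $v$, which is therefore $\Lr$-definable.

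For the forward direction, suppose $v$ is $\Lr$-definable. Definability of $\OO_v$ makes its units, and hence the convex subgroup $\Delta=\vnat(\OO_v\setminus\MM_v)$, definable in the interpreted value group $\Gamma$, giving the first condition. The second condition $\Delta\supseteq\Delta_p$ is the crux. Here I would argue by contraposition: if $\Delta\subsetneq\Delta_p$ for every prime $p$, then $v$ is strictly finer than each canonical $p$-valuation $v_p$, and one shows that no such valuation can be $\Lr$-definable. This is exactly where the general theory of definable henselian valuations (Koenigsmann; Jahnke--Koenigsmann) enters: an $\Lr$-definable henselian valuation must be detectable through $p$-th powers for some $p$, hence comparable with and coarser than $v_p$. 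Concretely, in a saturated elementary extension one produces $\Lr$-automorphisms fixing any prescribed tuple of parameters yet not preserving $\OO_v$, contradicting definability. I expect this non-definability argument to be the main obstacle, as it is the only step not reducible to the bookkeeping of convex subgroups.

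Finally, for the \emph{moreover} clause take $v=v_0$, i.e.\ $\Delta=\Delta_0$. Since $\Delta_0\subseteq\Delta_p$ always, $v_0\leq v_p$ forces $\Delta_0=\Delta_p$; and the maximal $p$-divisible convex subgroup of $v_0K=\Gamma/\Delta_0$ is precisely $\Delta_p/\Delta_0$, which is trivial iff $\Delta_0=\Delta_p$. Thus $v_0\leq v_p$ for some $p$ iff $v_0K$ has no non-trivial convex $p$-divisible subgroup for some $p$. In that case $v_0=v_p$ is already parameter-free $\Lr$-definable by the construction in the second paragraph, so the value-group condition is automatic and the two conditions of the main equivalence collapse to the single criterion stated.
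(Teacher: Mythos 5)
The first thing to note is that the paper does \emph{not} prove this statement: it is quoted as a Fact, with proof delegated entirely to \cite[Theorem~4.4]{delon}. So your attempt cannot be compared to a proof in the paper, only judged on its own. Your translation of the problem is sound and matches how the result is used in the paper: on an almost real closed field the convex (equivalently henselian) valuations correspond order-reversingly to convex subgroups $\Delta$ of $\Gamma=\vnat K$ via $\Delta=\vnat(\OO_v\setminus\MM_v)$, with $v_0\leftrightarrow\Delta_0$ (maximal divisible convex subgroup) and $v_p\leftrightarrow\Delta_p$ (maximal $p$-divisible convex subgroup); the reduction to $\R\pow{\Gamma}$ via \Autoref{fact:ake} transfers definability exactly as in the proof of \Autoref{prop:rcuniquedef}; and your derivation of the ``moreover'' clause from the main equivalence is correct bookkeeping, using that $\Delta_p/\Delta_0$ is the maximal $p$-divisible convex subgroup of $\Gamma/\Delta_0$.

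However, there are genuine gaps, and they sit exactly where the content of the theorem lies. (1) Your parameter-free definition of $\OO_{v_p}$ is false. In $L=\R\pow{\Gamma}$ one has $\pm L^p\setminus\{0\}=\setbr{y \mid \vmin(y)\in p\Gamma}$ by henselianity; if $\Gamma\neq p\Gamma$, pick $\delta\in\Gamma^{>0}\setminus p\Gamma$: for $x\neq 0$ with $\vmin(x)\in p\Gamma$ the element $y=-x^{-1}(1-t^{\delta})$ lies in $\pm L^p$ but $1+xy=t^{\delta}\notin\pm L^p$, while for $\vmin(x)\notin p\Gamma$ take $y=t^{p\gamma}$ with $\vmin(x)+p\gamma<0$. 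Hence your set $\setbr{x \mid 1+x\cdot(\pm L^p)\subseteq \pm L^p}$ is just $\{0\}$, not a valuation ring. Note the asymmetry: on the group side $\Delta_p$ is easily $\emptyset$-definable (by $\forall z\,(|z|\leq|x|\to\exists w\, (z=pw))$), but defining $\OO_{v_p}$ in $\Lr$ is the hard technical core of Delon--Farr\'e's argument (related to canonical $p$-henselian valuations, cf.\ \cite{jahnke4,jahnke5}), and \Autoref{fact:hong} does not supply it, since $v_pK$ need not be regular (e.g.\ $\Gamma=\Z\oplus\Z$, $\Delta_p=\{0\}$). (2) The forward direction is not proved: its crux, that $\Lr$-definability of $v$ forces $v\leq v_p$ for some $p$, is precisely the step you defer to ``general theory'' and a sketched automorphism argument. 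Moreover, your opening move there is incorrect: $\Gamma=\vnat K$ is not an ``interpreted value group'' of the pure field $K$, because $\vnat$ is in general not $\Lr$-definable (the paper's \Autoref{thm:negativeanswer} is exactly such an example); only $vK=\Gamma/\Delta$, or $v_pK=\Gamma/\Delta_p$ once $v_p$ is known definable, is interpretable, and passing from definability of $v_p(\OO_v\setminus\MM_v)$ inside that interpretation to $\Log$-definability of $\Delta$ in the abstract ordered group $\Gamma$ requires stable embeddedness (purity) of the value group plus a lift along the quotient $\Gamma\to\Gamma/\Delta_p$, neither of which you address. (3) Symmetrically, your backward direction silently replaces the hypothesis ``$\Delta$ is $\Log$-definable in $\Gamma$'' by ``$\Delta/\Delta_p$ is $\Log$-definable in $\Gamma/\Delta_p$''; definability of convex subgroups does not automatically pass to quotients, so this step too needs its own argument.
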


	Recall that $v_0$ is the only possible $\Lr$-definable valuation in $V(K)$. If the ordering on an almost real closed field $(K,<)$ is $\Lvf$-definable for $v = v_0 \in V(K)$, then we obtain a complete characterisation of $\Lor$-definable convex valuations in $K$.
	
	\begin{lemma}\thlabel{lem:orderingdef2}
		Let $(K,<)$ be an ordered field and let $v$ be a henselian valuation on $K$ such that $Kv$ is $2$-euclidean (i.e.\ root-closed for positive elements) and $vK$ is $2$-divisible. Then the ordering $<$ is parameter-free $\Lvf$-definable in $K$.
		In particular, if $v$ is $\Lr$-definable in $K$, then any $\Lor$-definable subset of $K$ is already $\Lr$-definable.
	\end{lemma}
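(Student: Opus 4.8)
The plan is to show that, under these hypotheses, the positive cone of $(K,<)$ coincides with the set of nonzero squares of $K$; this makes $<$ parameter-free definable already in $\Lr$, and hence in $\Lvf$, via $a<b \leftrightarrow \exists y\,(y \neq 0 \wedge b-a=y^2)$. First I would record two standing facts. Since $v$ is a henselian valuation on an ordered field, it is convex (by the remark in the introduction, based on \cite[Lemma~2.1]{knebusch}), so $Kv$ carries the induced ordering and is in particular an ordered field, thus of characteristic $0$. Consequently the residue characteristic is $0$, so $v(2)=0$ and Hensel's Lemma applies to $X^2-u$ whenever $u$ is a unit.

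Next I would treat units. Let $u \in \OO_v$ with $v(u)=0$. If $u=w^2$ then $w$ is a unit and $\overline u=\overline w^{\,2}$, so $\overline u$ is a square in $Kv$; conversely, if $\overline u=\overline a^{\,2}$ with $\overline a \neq 0$, then for $f(X)=X^2-u$ we have $v(f(a))>0=2v(f'(a))$, since $f'(a)=2a$ is a unit, so by henselianity $u$ is a square in $K$. Because $Kv$ is $2$-euclidean, $\overline u$ is a square in $Kv$ if and only if $\overline u>0$, and by convexity $\overline u>0$ if and only if $u>0$. Hence for units, $u>0$ if and only if $u$ is a square in $K$.

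I would then remove the restriction to units using $2$-divisibility of $vK$. Given $x \in K^{\times}$, write $v(x)=2\gamma$ and choose $t \in K^{\times}$ with $v(t)=\gamma$; then $u:=x/t^2$ is a unit and $x=ut^2$. Since $t^2>0$ and $t^2$ is a square, $x>0$ if and only if $u>0$, and $x$ is a square if and only if $u$ is a square. Combined with the unit case this yields $x>0$ if and only if $x$ is a nonzero square, so $<$ is parameter-free $\Lvf$-definable (indeed $\Lr$-definable) by the displayed formula above.

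For the final assertion, observe that because this definition of $<$ is already an $\Lr$-formula, replacing each occurrence of $<$ in a given $\Lor$-formula by it produces an equivalent $\Lr$-formula over $K$; thus any $\Lor$-definable subset of $K$ is $\Lr$-definable, which in particular applies when $v$ is $\Lr$-definable. One may equally argue within $\Lvf$: $<$ is $\Lvf$-definable, and if $v$ is $\Lr$-definable then $\OO_v$, and hence every $\Lvf$-formula, reduces to $\Lr$. The only genuine work is the unit case: the crux is that $Kv$ has characteristic $0$, so that $v(2)=0$ and Hensel's Lemma truly applies to $X^2-u$, together with the observation that $2$-divisibility of $vK$ is exactly what permits stripping off a square to reduce the general case to units.
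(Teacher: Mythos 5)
Your proof is correct, but it takes a genuinely different route from the paper's and in fact establishes a stronger statement. The paper never argues inside $K$ directly: it invokes the Ax--Kochen--Ershov principle (\Autoref{fact:ake}) to get $(K,<,v)\equiv(Kv\pow{vK},<,\vmin)$, verifies by a leading-term computation in the Hahn field that the parameter-free $\Lvf$-formula $x=0 \vee \exists y\ v(x-y^2)>v(x)$ defines the set of non-negative elements (there, $2$-euclideanness supplies $\sqrt{a_g}$ and $2$-divisibility supplies $t^{g/2}$), and then transfers this back to $(K,<,v)$ by elementary equivalence. You instead work in $K$ itself: Hensel's Lemma --- legitimately applicable since the residue characteristic is $0$, so $v(2)=0$ --- shows every positive unit is a square, and $2$-divisibility of $vK$ reduces an arbitrary positive element to a positive unit times a square, so the positive cone equals the set of nonzero squares. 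Consequently you obtain that $<$ is parameter-free $\Lr$-definable, not merely $\Lvf$-definable, and your ``in particular'' clause needs no hypothesis on $v$ at all: every $\Lor$-definable subset of such a $K$ is $\Lr$-definable outright. This strengthening is not idle: in \Autoref{thm:negativeanswer} the paper invokes \Autoref{prop:charlrdef} for $K_2=\R\pow{G_2}$ even though the hypothesis there that $v_0$ be $\Lr$-definable fails for $K_2$, whereas your square-cone argument yields directly that $\Lor$-definable subsets of $K_2$ are $\Lr$-definable, which is exactly what that example needs. What the paper's AKE route buys in exchange is uniformity with the surrounding arguments (e.g.\ \Autoref{prop:rcuniquedef}), which all pass through Hahn fields; your route is more elementary and self-contained.
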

	
	\begin{proof}
		Let $k=Kv$ and $G=vK$. Consider the $\Lvf$-formula $\varphi(x)$ given by $$x = 0 \vee \exists y\ v(x-y^2) > v(x).$$
		We will show that for any $a \in k\pow{G}$, the formula $\varphi(a)$ holds if and only if $a \geq 0$. 
		Let $a = a_gt^g + s \in k\pow{G}^\times$, where $a_g \in k^\times$, $s \in k\pow{G^{>g}}$ and $g = \vmin(a)$.
		Suppose that $\varphi(a)$ holds. Then there exists $y \in K^\times$ such that $\vmin(x-y^2) > g$. Hence, $a_g = y_g^2 > 0$, where $y_g$ is the coefficient of the monomial $t^g$ in $y$. Thus, $a > 0$.
		Now suppose that $a > 0$. Let $y = \sqrt{a_g}t^{g/2}$. Then $\vmin(a-y^2) = \vmin(s) > g = \vmin(a)$.
		
		By \Autoref{fact:ake}, $(K,<,v) \equiv (Kv\pow{vK},<, \vmin)$. Hence, we obtain $K \models \forall x \ (x\geq 0 \leftrightarrow \varphi(x))$.
	\end{proof}
	
	\begin{proposition}\thlabel{prop:charlrdef}
		Let $(K,<)$ be an almost real closed field. Suppose that $v_0$ is $\Lr$-definable and that $v_0K$ is $2$-divisible. Let $w$ be a  valuation on $K$. If $w$ is $\Lor$-definable, then it is $\Lr$-definable.
	\end{proposition}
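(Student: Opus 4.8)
The plan is to reduce the statement to \Autoref{lem:orderingdef2} applied to the valuation $v_0$. The key observation is that $v_0$ satisfies both hypotheses of that lemma. Since $v_0$ is by definition the minimum of $V(K)$, we have $v_0 \in V(K)$, and hence its residue field $Kv_0$ is real closed. Every real closed field is $2$-euclidean: its nonnegative elements are exactly its squares, so $Kv_0 = \pm(Kv_0)^2$. Moreover, the value group $v_0K$ is $2$-divisible by assumption. Thus both hypotheses of \Autoref{lem:orderingdef2} hold for $v = v_0$.

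Granting this, \Autoref{lem:orderingdef2} tells us that the ordering $<$ is parameter-free $\Lvf$-definable in $K$ with respect to $v_0$. Since by assumption $v_0$ is \emph{also} $\Lr$-definable in $K$, the ``in particular'' clause of \Autoref{lem:orderingdef2} immediately yields the crucial transfer: every $\Lor$-definable subset of $K$ is already $\Lr$-definable. Intuitively, one may rewrite any occurrence of the order relation in an $\Lor$-formula by its $\Lvf$-definition and then eliminate the valuation predicate via the $\Lr$-definition of $v_0$.

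Finally, I would apply this to $w$ itself. By hypothesis $w$ is $\Lor$-definable, which by definition means that its valuation ring $\OO_w$ is an $\Lor$-definable subset of $K$. By the previous paragraph, $\OO_w$ is then $\Lr$-definable, and therefore $w$ is $\Lr$-definable, as required.

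I do not expect any genuine obstacle here: the entire technical content has already been packaged into \Autoref{lem:orderingdef2}, and the only things left to verify are the two structural hypotheses on $Kv_0$ and $v_0K$, both of which are immediate from the definition of an almost real closed field and from the standing assumption that $v_0K$ is $2$-divisible. The one point that warrants an explicit sentence is the verification that $v_0 \in V(K)$ (so that $Kv_0$ is guaranteed real closed, hence $2$-euclidean); this is precisely the defining property of $v_0$ as the coarsest valuation in $V(K)$.
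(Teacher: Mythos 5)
Your proposal is correct and follows exactly the paper's route: the paper's proof likewise observes that $Kv_0$ is real closed, hence $2$-euclidean, and then invokes \Autoref{lem:orderingdef2} (with $v = v_0$, using its $\Lr$-definability and the $2$-divisibility of $v_0K$) to conclude that every $\Lor$-definable valuation is $\Lr$-definable. Your write-up merely spells out the details (henselianity and real closedness of the residue field via $v_0 \in V(K)$, and the final application to $\OO_w$) that the paper leaves implicit.
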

	
	\begin{proof}
		Since $v_0K$ is real closed, it is $2$-euclidean. By \Autoref{lem:orderingdef2}, any $\Lor$-definable valuation on $K$ is already $\Lr$-definable.
	\end{proof}
	
	\begin{remark}
		We obtain the following characterisation of $\Lor$-definable convex valuations in almost real closed fields with respect to a $2$-divisible value group:
		Let $(K,<)$ be an almost real closed field. Suppose that the value group $v_0K$ is $2$-divisible and, for some prime $p$, it has no non-trivial $p$-divisible convex subgroup. Let $v$ be a convex (and thus henselian) valuation on $K$. Hence, by \Autoref{prop:charlrdef} and \Autoref{fact:thm44}, $v$ is $\Lor$-definable in $K$ if and only if { $\vnat(\OO_v\setminus\MM_v)$} is $\Log$-definable in { $\vnat K$} and $v \leq v_p$ for some prime $p$.
	\end{remark}
	
	\Autoref{prop:charlrdef} shows in particular that for an almost real closed field, if $v_0$ is $\Lr$-definable, then so is any $\Lor$-definable henselian valuation. The final result of this sections shows that any henselian valuation in an almost real closed field satisfying the hypothesis of \Autoref{thm:defval} is already $\Lr$-definable. Note that any discretely ordered abelian group does not have a non-trivial $n$-divisible convex subgroup for any $n \geq 2$.
	
	\begin{proposition}\thlabel{prop:lorthenlr}
		Let $(K,<)$ be an almost real closed field with respect to a henselian valuation $v$ such that either $vK$ is discretely ordered or $vK$ is not closed in $\div{vK}$. Then $v = v_0$ and $v_0$ is $\Lr$-definable.
	\end{proposition}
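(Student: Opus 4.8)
The plan is to reduce both conclusions to a single statement about the value group: \emph{under either hypothesis there is a prime $p$ such that $vK$ has no non-trivial $p$-divisible convex subgroup}. First I would record that $v \in V(K)$, since by \Autoref{def:arc} the residue field $Kv$ is real closed and $v$ is henselian; consequently $v_0 \leq v$. I would then note two consequences of the displayed statement. Since a divisible group is $p$-divisible, it implies that $vK$ has no non-trivial divisible convex subgroup at all; and it is precisely the criterion occurring in the ``moreover'' part of \Autoref{fact:thm44}, so once $v = v_0$ is established it will yield $\Lr$-definability of $v_0$ for free.

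Next I would verify the displayed statement case by case. If $vK$ is discretely ordered, take $p=2$: a non-trivial convex subgroup $H$ contains, by convexity, the least positive element $1$ of $vK$, so $2$-divisibility of $H$ would force $\frac{1}{2}\in H\subseteq vK$ with $0<\frac{1}{2}<1$, contradicting the minimality of $1$; hence no non-trivial $2$-divisible convex subgroup exists. If instead $vK$ is not closed in $\div{vK}$, I would invoke the contrapositive of \Autoref{prop:convdivlimit}: if $vK$ admitted, for every prime $p$, a non-trivial $p$-divisible convex subgroup, then $vK$ would be closed in $\div{vK}$; since it is not, some prime $p$ admits no such subgroup.

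It then remains to deduce $v = v_0$. I would argue by contradiction: suppose $v_0 < v$, so that $v_0$ is a proper coarsening of $v$ corresponding to a non-trivial convex subgroup $\Delta \subseteq vK$, with the residue field $Kv_0$ carrying the valuation induced by $v$, whose value group is $\Delta$ and whose residue field is $Kv$. Because $v_0 \in V(K)$, the field $Kv_0$ is real closed, so every positive element is an $n$-th power for all $n$; hence its value group $\Delta$ under the induced valuation is divisible. Thus $\Delta$ would be a non-trivial divisible convex subgroup of $vK$, contradicting the first paragraph. Therefore $v = v_0$, and the displayed prime $p$ together with the ``moreover'' part of \Autoref{fact:thm44} shows that $v_0$ is $\Lr$-definable.

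The step I expect to require the most care is the valuation-theoretic identification in the last paragraph: one must correctly read off that a proper coarsening $v_0$ of $v$ lying in $V(K)$ forces the value group induced on the real closed residue field $Kv_0$ to be a non-trivial \emph{divisible} convex subgroup of $vK$. The appeal to the contrapositive of \Autoref{prop:convdivlimit} in the non-closed case is the clean bridge that lets the discretely ordered case and the non-closed case be handled by one uniform criterion on $p$-divisible convex subgroups.
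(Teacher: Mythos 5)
Your proof is correct, and its overall skeleton matches the paper's: both hypotheses are reduced to divisibility conditions on convex subgroups of $vK$ (the discrete case by a direct argument, the non-closed case via the contrapositive of \Autoref{prop:convdivlimit}), and $\Lr$-definability is then read off from the ``moreover'' clause of \Autoref{fact:thm44}. The genuine difference is in how $v=v_0$ is obtained. The paper simply \emph{recalls}, as a known uniqueness statement going back to Delon--Farré, that $v_0$ is the unique henselian valuation with real closed residue field whose value group has no non-trivial divisible convex subgroup, and verifies that criterion using \Autoref{cor:convdivlimit}; you instead \emph{prove} the direction of that characterisation which is needed: since $v_0$ is the minimum of $V(K)$ and $v\in V(K)$, a strict inequality $v_0<v$ would make $v_0$ a proper coarsening of $v$, whose associated non-trivial convex subgroup $\Delta\subseteq vK$ is the value group of the valuation induced on the real closed field $Kv_0$, hence divisible --- contradicting your single criterion (a prime $p$ with no non-trivial $p$-divisible convex subgroup, which subsumes the divisible case since divisible implies $p$-divisible). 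Your route buys self-containedness: it makes explicit the coarsening/convex-subgroup correspondence and the fact that value groups of valuations on real closed fields are divisible, and it unifies both cases and both conclusions under one $p$-divisibility statement (also replacing the paper's appeal to the remark preceding the proposition about discretely ordered groups by an explicit argument with $p=2$). The paper's route is correspondingly shorter, because exactly that piece of valuation theory is delegated to the cited characterisation of $v_0$.
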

	
	\begin{proof}
		Recall that $v_0$ is the unique henselian valuation on $K$ such that $Kv_0$ is real closed and $v_0K$ has no non-trivial divisible convex subgroup. Moreover, $v_0$ is $\Lr$-definable if and only if for some prime $p$, there is no $p$-divisible non-trivial convex subgroup of $v_0K$. If $G = vK$ is discretely ordered, both conditions are satisfied, and thus $v = v_0$ is $\Lr$-definable. If $G$ is not closed in $\div{G}$, then by \Autoref{cor:convdivlimit}, $G$ has no non-trivial convex subgroup, whence $v = v_0$. Moreover, by \Autoref{prop:convdivlimit} there is a prime $p$ such that $G$ has no non-trivial $p$-divisible convex subgroup, whence $v_0$ is $\Lr$-definable.
	\end{proof}


	\section{Applications to definable valuations on strongly NIP ordered fields}\label{sec:snip}
	
	{ The study of $\Lr$-definable henselian valuations in strongly NIP fields has been motivated by a conjecture due to Shelah--Hasson stating as follows (cf.\ \cite[page~63]{shelah} and \cite[page 820]{dupont}).
	\begin{conjecture}[Shelah--Hasson]\thlabel{conj:shelahhasson}
		Any { infinite} strongly NIP field is either real closed, algebraically closed or admits a non-trivial $\Lr$-definable henselian valuation.
	\end{conjecture}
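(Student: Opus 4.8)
This statement is posed as a conjecture, and no unconditional proof is currently known; it has been verified only in restricted settings, most notably by Johnson for dp-finite fields \cite{johnson2}. An honest proof proposal therefore consists in describing the strategy that succeeds in those cases and locating precisely where it stalls in general. The guiding idea is that strong NIP should force on an infinite field $K$ a canonical \emph{definable} field topology, and that the algebraic nature of this topology detects exactly the trichotomy: an archimedean topology forces $K$ to be real closed or algebraically closed, while a non-archimedean one is induced by a valuation. Concretely, the plan is: (i) attach to $K$ a canonical definable topology, built from the lattice of (externally) definable additive subgroups; (ii) prove that this topology is a V-topology; (iii) apply the classical dichotomy for V-topologies, which presents any V-topology as induced either by an absolute value or by a valuation; and (iv) in the valuation case, verify that the valuation so produced is henselian and, being assembled from definable data, $\Lr$-definable.

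Step (i) is the heart of the matter and is where Johnson's argument lives: under dp-finiteness one has an additive notion of dp-rank that behaves well under the field operations, and this lets one single out a definable neighbourhood basis of $0$ closed (up to commensurability) under addition and multiplication. First I would try to reconstruct such a basis in the strongly NIP setting, aiming for a family of definable subgroups whose ``infinitesimal'' intersection is a valuation ideal. Step (ii) then reduces to checking the V-topology axioms, the subtle one being the multiplicative coherence condition (for every neighbourhood $U$ of $0$ there is a neighbourhood $V$ with $xy\in U$ whenever $x\notin V$ and $y\notin V$), which is exactly what rules out a degenerate topology arising from several independent valuations. Step (iii) is purely topological, and step (iv) would use that in an NIP field a non-henselian valuation would let one interpret, after passing to a henselisation, two algebraically independent residue-level configurations, contradicting the bounded combinatorial complexity imposed by NIP.

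The hard part --- and the reason the conjecture remains open --- is concentrated in step (i) for unbounded rank: the canonical-topology machinery is presently controlled only when $\dprk K<\omega$, where rank additivity drives the subgroup analysis, whereas a general strongly NIP field may have infinite dp-rank and the existence of a single canonical V-topology is not known there. A secondary obstacle sits in step (iv): establishing henselianity abstractly, rather than reading it off a concrete classification, would require the still-open principle that NIP valued fields are henselian, or at least its strongly dependent instance. I would therefore not expect to close the general conjecture by this route; what is genuinely usable for the present paper is the dp-finite case \cite{johnson2} together with the order-theoretic reformulation and the counterexample to its naive valuation-theoretic strengthening developed in the sections that follow.
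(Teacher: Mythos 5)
You have correctly recognised that this statement is stated in the paper as an open conjecture (attributed to Shelah and to Dupont--Hasson--Kuhlmann) and is not proved there; the paper, like your proposal, only records that it has been verified in the dp-minimal and dp-finite cases by Johnson. Your sketch of Johnson's canonical-topology strategy and of where it stalls for general strongly NIP fields is a faithful account of the state of the art, so your treatment matches the paper's.
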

	In the past few years, a considerable amount of research activity dealing with this conjecture could be observed, in particular since it was proved by Johnson for the special case of dp-minimal fields (cf.\ \cite[Theorem~1.6]{johnson}) and more recently for dp-finite fields (cf.\ \cite[Theorem~1.2]{johnson2}).
	 Specialised to ordered fields, \autoref{conj:shelahhasson} can be reformulated as follows:\\
	 
	 	\emph{Any strongly NIP ordered field which is not real closed admits a non-trivial $\Lor$-definable henselian valuation.}
	\\
	\\
 	 (Note that we replaced $\Lr$-definabiliy by $\Lor$-definability, as definability in the language of ordered rings is more natural in the context of ordered fields.) Again, this has already been verified for the dp-minimal case (cf.\ \cite[Corollary~6.6]{jahnke}) but is still open for general strongly NIP ordered fields.
 	 
 	 In \cite{krapp}, it is shown that the statement above is equivalent to the following.\footnote{As mentioned in the introduction, this conjecture is the main subject of a separate publication \cite{krapp2}.}
 	 \begin{conjecture}[Shelah--Hasson specialised to ordered fields]\thlabel{conj:snipordered}
 	 	 
 	 Any strongly NIP ordered field is almost real closed.
  	 \end{conjecture}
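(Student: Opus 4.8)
The plan is to show that a strongly NIP ordered field $(K,<)$ carries a henselian valuation $v$ with $Kv$ real closed; since a real closed field is almost real closed via the trivial valuation, it suffices to treat $K$ not real closed, and by the reformulation of \Autoref{conj:snipordered} this is equivalent to producing a non-trivial $\Lor$-definable henselian valuation on $K$. The overall strategy has two stages: first produce \emph{some} non-trivial $\Lor$-definable henselian valuation, and then refine it, recursing on the residue field, until the residue is real closed. For the recursion, note that if $v$ is $\Lor$-definable then $Kv$ is interpretable in $(K,<,v)$ and hence again a strongly NIP ordered field, so the hypothesis is inherited and the argument can be reapplied to $Kv$.

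For the first stage I would take as base case Johnson's theorem for dp-finite fields \cite{johnson2}: a dp-finite field is algebraically closed, real closed, or admits a non-trivial $\Lr$-definable henselian valuation, and the first option is excluded since $(K,<)$ is formally real. For the passage to strongly NIP fields I would analyse the canonical henselian valuation and apply the definability criteria of this paper. Concretely, writing $v$ for the relevant henselian valuation with value group $vK$ and residue field $Kv$: if $vK$ is discretely ordered or has a limit point in $\div{vK}\setminus vK$, then $v$ is $\Lor$-definable by \Autoref{thm:defval}; if $vK$ is non-divisible but dense in $\div{vK}$ it is regular and $v$ is even $\Lr$-definable by \Autoref{cor:hong}; and if $Kv$ is not real closed but dense in $\rc{Kv}$ then $v$ is $\Lor$-definable by \Autoref{cor:kdensedef}. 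The one remaining configuration, in which $vK$ is divisible and $Kv$ is real closed, already exhibits $K$ as almost real closed by \Autoref{def:arc}. The archimedean case ($\vnat$ trivial, $K\vnat=K\subseteq\R$) must be handled separately: here one must show that a strongly NIP archimedean ordered field is real closed, the point being that a proper non-real-closed archimedean ordered field such as $(\Q,<)$ interprets arithmetic and is therefore not even NIP.

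The hard part, and the reason \Autoref{conj:snipordered} is open, is that strong NIP is strictly weaker than dp-finiteness, so Johnson's theorem does not apply off the shelf and there is no a priori bound on dp-rank. Two difficulties result. First, for fields of infinite dp-rank one has no base case at all: producing even one non-trivial $\Lor$-definable henselian valuation is exactly the content being sought, and the natural valuation is not definable in general, so one cannot freely pass to $\vnat K$ and $K\vnat$ as strongly NIP structures before the valuation is in hand. Second, the recursion on residue fields is not obviously well-founded: each residue field $Kv$ is again strongly NIP but may fail to be dp-finite and may again require an infinite descent, so there is no dp-rank or ordinal invariant guaranteeing that the process reaches a real closed residue field in finitely many steps. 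A full proof would therefore require new structure theory for strongly NIP (non-dp-finite) fields extending the analysis of \cite{johnson2}, together with a termination argument for the residue-field recursion.
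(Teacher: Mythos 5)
There is a fundamental mismatch here: the statement you were asked to prove is a \emph{conjecture} in the paper, not a theorem. The paper offers no proof of \autoref{conj:snipordered}; it only records (citing \cite{krapp}, with details deferred to \cite{krapp2}) that the conjecture is \emph{equivalent} to the statement that every strongly NIP ordered field which is not real closed admits a non-trivial $\Lor$-definable henselian valuation, and then constructs a counterexample (\autoref{thm:negativeanswer}) to the \emph{strengthening} in \autoref{qu:strengthening}, namely that the witnessing valuation can always be taken $\Lor$-definable. So your proposal cannot be ``the same approach as the paper'' or ``a different route'' --- there is nothing to compare it to, and, as you yourself concede in your final paragraph, what you have written is not a proof.

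To be concrete about where the attempt stops being a proof: (i) your base case is Johnson's theorem \cite{johnson2}, which covers only dp-finite fields, and you supply no mechanism for producing even one non-trivial $\Lor$-definable henselian valuation on a strongly NIP ordered field of infinite dp-rank --- that is precisely the open content, since the paper's definability results (\autoref{thm:defval}, \autoref{cor:hong}, \autoref{cor:kdensedef}) all \emph{presuppose} a henselian valuation and only upgrade it to a definable one under hypotheses on $vK$ or $Kv$; (ii) your residue-field recursion has no termination argument, and the paper's own \autoref{thm:negativeanswer} shows the recursion cannot be run through $\Lor$-definable valuations in general, since there are strongly NIP almost real closed fields whose only valuation witnessing almost real closedness (here $\vnat = v_0$ on $\R\pow{G_2}$) is not $\Lor$-definable --- so any correct proof must produce the henselian valuation without insisting on definability at each stage; (iii) your treatment of the archimedean case is unjustified: that $(\Q,<)$ defines $\Z$ and hence has IP does not show that an arbitrary non-real-closed archimedean ordered field fails to be strongly NIP, and indeed the archimedean instance of \autoref{conj:snipordered} (strongly NIP archimedean ordered fields are real closed) is itself open, since henselian valuations on archimedean fields are trivial. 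Your diagnosis of the difficulties is accurate and matches the state of the art, but the proposal reduces the conjecture to itself rather than proving it.
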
 
   
     Comparing the original formulation of the Shelah--Hasson Conjecture to the formulation in \autoref{conj:snipordered}, it is tempting to ask the following question.
     
     \begin{question}[Strengthening of \autoref{conj:snipordered}]\thlabel{qu:strengthening}
     	Any strongly NIP ordered field is almost real closed \textbf{with respect to} an $\Lor$-definable henselian valuation. 
     \end{question}
   	
   	 
   	 \autoref{thm:negativeanswer} below provides a negative answer to \autoref{qu:strengthening}.
   	 }
	
	All notions on strongly NIP theories can be found in \cite{simon}. For the purpose of understanding how \Autoref{fact:thm44} can be applied to { strongly} NIP ordered fields, the following two results are helpful.
	
	\begin{fact}{\rm \cite[Theorem~1]{halevi2}}\thlabel{fact:snipgp}
		Let $G$ be an ordered abelian group. Then the following are equivalent:
		\begin{enumerate}[wide, labelwidth=!, labelindent=6pt]
			\item $G$ is strongly NIP.
			
			\item $G$ is elementarily equivalent to a Hahn sum of archimedean ordered abelian groups $\coprod_{i \in I} G_i$, where for every prime $p$, we have
			$|\{i \in I \mid pG_i \neq G_i \}| < \infty$,
			and for any $i \in I$, we have
			$|\{p \text{ prime} \mid [G_i:pG_i]=\infty \}| < \infty$.
		\end{enumerate}
	\end{fact}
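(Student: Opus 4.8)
The plan is to translate the model-theoretic property of strong dependence into purely arithmetic finiteness conditions on the $p$-divisibility data of $G$, using the quantifier elimination for ordered abelian groups (Cluckers--Halupczok) together with the Hahn embedding $G \hookrightarrow \H_{\gamma \in vG} B_\gamma$. Since being strongly NIP is preserved under elementary equivalence, I would fix a convenient normal form and argue there. Throughout I would use that $G$ is strongly dependent exactly when it admits no infinite inp-pattern in a single object variable, equivalently when $\dprk$ of a one-variable type is finite; for groups such patterns are governed by congruence conditions of the form $\exists z\,(x - c = pz)$ modulo primes $p$, localized on the value chain $vG$.

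\textbf{Backward direction (2)$\Rightarrow$(1).} First I would show that each archimedean component is strongly dependent. By Hölder's theorem every $G_i$ embeds into $(\R,+,<)$, and archimedean ordered abelian groups are classified up to elementary equivalence by the indices $[G_i:pG_i]$ together with discreteness; when only finitely many primes are bad (condition~2) one gets strong dependence with $\dprk$ bounded in terms of the number of bad primes. I would then show the Hahn sum $\coprod_i G_i$ inherits strong dependence: via quantifier elimination every definable set is a Boolean combination of ordering and congruence conditions that can be read off from finitely many convex sections of the spine, and condition~1 guarantees that for each prime only finitely many sections are $p$-relevant. A burden computation in the spirit of the Ax--Kochen--Ershov principle then bounds the length of any inp-pattern, so no infinite one exists.

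\textbf{Forward direction (1)$\Rightarrow$(2), contrapositive.} Assume condition~(2) fails. If some prime $p$ admits infinitely many non-$p$-divisible components $G_{i_1},G_{i_2},\ldots$, I would pick $g_k \in G_{i_k}\setminus pG_{i_k}$; since divisibility in a Hahn sum is coordinatewise, the conditions pinning down the $i_k$-th coordinate of $x$ modulo $p$ at distinct coordinates are mutually independent, and translating them into the group language through the auxiliary spine sorts yields an infinite inp-pattern in the single variable $x$, contradicting strong dependence. If instead some $G_i$ has infinitely many bad primes $p_1,p_2,\ldots$, then inside the convex-section subquotient $G_i$ the congruences modulo distinct $p_k$ are independent, again producing an infinite inp-pattern; as $G_i$ is a definable subquotient on which strong dependence descends, this contradicts strong dependence of $G$. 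Hence both finiteness conditions hold, and using the spine analysis I would finally exhibit the normal form by showing that $G$ is elementarily equivalent to the Hahn sum of its archimedean components (each replaced by an elementarily equivalent archimedean group), which by construction satisfies~(2).

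\textbf{Main obstacle.} The hard part will be the backward direction's verification that \emph{no} infinite inp-pattern exists: this cannot be checked formula by formula and genuinely requires the full Cluckers--Halupczok quantifier elimination to reduce every definable set to a controlled normal form on the auxiliary spine sorts, followed by a delicate count of the mutually independent congruence conditions needed to bound the burden. Establishing the precise Hahn-sum normal form in the forward direction, matching simultaneously the value-chain data and the component invariants up to elementary equivalence, is the other genuinely delicate point.
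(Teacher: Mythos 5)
The paper offers no proof of this statement to compare against: it is imported verbatim as \cite[Theorem~1]{halevi2}, so your sketch can only be measured against the actual Halevi--Hasson argument. Its broad outline --- Cluckers--Halupczok quantifier elimination with auxiliary sorts, a burden computation for (2)$\Rightarrow$(1), and independent congruence conditions producing inp-patterns to force the finiteness conditions in (1)$\Rightarrow$(2) --- does follow the strategy of that proof.

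There is, however, a genuine gap, indeed a false step, in your forward direction. Your concluding move is ``showing that $G$ is elementarily equivalent to the Hahn sum of its archimedean components (each replaced by an elementarily equivalent archimedean group)''. This is false for strongly NIP ordered abelian groups, and a counterexample appears in this very paper, namely the unlabelled example following \Autoref{cor:denseimm}: take $H' = \coprod_{\gamma \in \omega+1}\Q$, let $a = \sum_{\gamma \in \omega}\mathds{1}_\gamma$ and $G = H' + a\Z$. Every archimedean component of $G$ is isomorphic to $\Q$, so any Hahn sum of archimedean groups elementarily equivalent to the components of $G$ is divisible; but $G$ itself is not divisible ($a \in pG$ would give $(1-pk)a \in H'$ for some $k \in \Z$, impossible since $a$ has infinite support and $1-pk \neq 0$), and divisibility is first-order, so $G$ is not elementarily equivalent to any such Hahn sum. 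Nevertheless $G$ is strongly NIP: one checks $G/pG \cong \Z/p\Z$ for every prime $p$, so $G$ is even dp-minimal by the characterisation of dp-minimal ordered abelian groups in \cite{jahnke}. Thus the summands $G_i$ in condition (2) cannot in general be extracted from the archimedean components of $G$: non-divisibility can sit ``diagonally'' across the components, and the normal form must introduce summands (here, a $\Z$-group) that are not elementarily equivalent to any component of $G$. Producing the correct index set and summands is precisely the hard content of the forward direction, and your sketch has no mechanism for it. Two smaller inaccuracies point the same way: your contrapositive opens with ``assume condition (2) fails'', but $\neg(2)$ states that $G$ is not elementarily equivalent to \emph{any} admissible Hahn sum, not that the finiteness conditions fail for $G$'s own components, which is the only case your argument treats; and the components $B_\gamma = G^\gamma/G_\gamma$ are not definable subquotients in $\Log$ (the convex subgroups $G^\gamma$, $G_\gamma$ are in general not definable), so ``strong dependence descends'' cannot be invoked via plain interpretability and genuinely requires the auxiliary-sort machinery.
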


	\begin{lemma}{\rm \cite[Lemma~6.11]{krapp}}\thlabel{lem:powstrongnip}
		Let $G$ be a strongly NIP ordered abelian group. Then the almost real closed field $(\R\pow{G}, <)$ is strongly NIP.
	\end{lemma}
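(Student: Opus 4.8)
The plan is to pass from the ordered field to the ordered \emph{valued} field and then transfer strong NIP from the residue field and value group by an Ax--Kochen--Ershov argument. Since strong NIP is inherited by reducts (every formula of a reduct is a formula of the expansion, and the dp-rank of a reduct is bounded by that of the expansion), it suffices to show that the ordered valued field $(\R\pow{G},<,\vmin)$ is strongly NIP; the ordered field $(\R\pow{G},<)$ is then strongly NIP as its $\Lor$-reduct.

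For this reduction I would record that $\vmin$ is a henselian convex valuation on $\R\pow{G}$ of residue characteristic $0$, with residue field $\R\pow{G}\vmin=\R$ (so that $\vmin=\vnat$, as $\R$ is archimedean) and value group $\vmin\R\pow{G}=G$. The ordered residue field $(\R,<)$ is o-minimal, hence dp-minimal and in particular strongly NIP, while the value group $G$ is strongly NIP by hypothesis. I would then apply the strong-NIP version of the Ax--Kochen--Ershov principle for ordered henselian valued fields of equicharacteristic $0$: such a field is strongly NIP precisely when its ordered residue field and its value group are both strongly NIP (the forward implication is immediate, the content being the converse). Feeding in the two facts above yields that $(\R\pow{G},<,\vmin)$ is strongly NIP, and hence so is the reduct $(\R\pow{G},<)$, as required. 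One should check that the hypotheses of the transfer principle are met, namely that $\vmin$ is convex and the valuation is henselian; both are recorded in the preliminaries.

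The main obstacle is the transfer principle itself, and in particular the fact that one cannot circumvent it by eliminating the ordering in favour of the valuation. By \Autoref{lem:orderingdef2} the ordering would be $\Lvf$-definable if $G$ were $2$-divisible, but for general strongly NIP $G$ this fails: whenever $g\in G\setminus 2G$, no element $s$ with $\vmin(s)=g$ is a square (squares have valuation in $2G$), so neither $s$ nor $-s$ is a square although exactly one of them is positive; hence the sign of the leading coefficient is genuine extra information not recoverable in $\Lvf$. Consequently one genuinely needs a transfer result that keeps track of the ordering, rather than the pure valued-field statement combined with \Autoref{lem:orderingdef2}. The crux of the proof is therefore to have such an ordered Ax--Kochen--Ershov theorem for strong NIP available, with the o-minimal residue field $(\R,<)$ and the strongly NIP value group $G$ as the two inputs.
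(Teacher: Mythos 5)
Note first that this paper does not prove \Autoref{lem:powstrongnip} at all: it is quoted verbatim from the authors' preprint \cite[Lemma~6.11]{krapp}, so your attempt has to be measured against what that source and the paper's cited toolkit actually provide. Your reductions are sound (passing to $(\R\pow{G},<,\vmin)$, which is henselian, convex, of equicharacteristic $0$, with value group $G$ and o-minimal hence strongly NIP ordered residue field $(\R,<)$), and your observation that the ordering is not recoverable in $\Lvf$ when $G$ fails to be $2$-divisible is correct and important. But the step you yourself call the crux is a genuine gap: the ``strong-NIP version of the Ax--Kochen--Ershov principle for \emph{ordered} henselian valued fields'' is not a theorem you can quote. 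The transfer result available in the literature this paper builds on (Halevi--Hasson \cite{halevi2}) concerns henselian valued fields of equicharacteristic $0$ \emph{without} an ordering. Worse, by Farr\'e's ordered AKE principle (\Autoref{fact:ake}), any ordered henselian valued field whose ordered residue field is elementarily equivalent to $(\R,<)$ and whose value group is elementarily equivalent to $G$ is elementarily equivalent to $(\R\pow{G},<,\vmin)$ with its standard order; so the ordered transfer principle you invoke is essentially equivalent to (a mild generalisation of) the lemma being proved. Invoking it is therefore circular, and, as your own non-definability remark demonstrates, it does not follow formally from the unordered transfer combined with \Autoref{lem:orderingdef2}.

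The missing idea that closes the gap is the canonical angular component map $\ac\co \R\pow{G}^{\times}\to\R^{\times}$, $s\mapsto s_{\vmin(s)}$, i.e.\ the leading-coefficient map of the Hahn field. In the Denef--Pas structure $(\R\pow{G},\vmin,\ac)$ the ordering \emph{is} definable without any divisibility hypothesis on $G$: one has $s>0$ if and only if $\ac(s)$ is a nonzero square in the residue field $\R$. Hence $(\R\pow{G},<)$ is a reduct of $(\R\pow{G},\vmin,\ac)$, and for the latter the transfer of strong NIP from the pair $(\R,G)$ is within reach of the cited results: field quantifier elimination \`a la Pas (henselian, equicharacteristic $0$, with angular components) reduces every formula to conditions in the residue field and value group sorts, which are orthogonal and carry only their native structure, and the dp-rank analysis of \cite{halevi2} applies in this setting. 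In short, the correct route absorbs the ordering into the angular-component structure, for which unordered transfer machinery exists, rather than postulating an order-sensitive AKE theorem; your proposal correctly isolates exactly this difficulty but then leaves it unresolved.
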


	By \Autoref{fact:snipgp}, the following are examples of strongly NIP non-divisible ordered abelian groups.
	
	\begin{example}\thlabel{ex:snipdensenotdense}
		\begin{enumerate}[wide, labelwidth=!, labelindent=6pt]
			\item \label{ex:snipdensenotdense:1} Let 
			$$ G_1 = \setbr{\left. \frac{a}{p_1^{i_1}\ldots p_m^{i_m}} \ \right|m \in \N;  { a\in\Z,i_1,\ldots,i_m \in \N_0; p_1,\ldots,p_m\geq 3 \text{ prime}}}\!.$$
			$G_1$ is $p$-divisible for any prime $p\geq 3$. Thus,  
			$|\{p \text{ prime} \mid [G_1:pG_1]=\infty \}| \leq 1$. This implies that $G_1$ is strongly NIP. Moreover, it is dense in its divisible hull $\Q$ but not divisible, as $\frac 1 2 \notin G_1$.
			
			\item \label{ex:snipdensenotdense:2} Let $2 = p_0 < p_1 < \ldots$ be a complete list of prime numbers. For every $n \in \N$, let $$B_n = \setbr{\left.\frac{a}{p_{i_1}^{m_1}\cdot\ldots\cdot p_{i_k}^{m_k}} \ \right| \ k \in \N; i_1,\ldots,i_k \in \N_0\setminus{\{n\}}; { a\in \Z,m_1,\ldots,m_k \in \N_0}}\!.$$ In other words, $B_n$ is the smallest subgroup of $\Q$ which is $p_i$-divisible for any $i \in \N_0\setminus{\setbr{n}}$ (so in particular $2$-divisible) but not $p_n$-divisible. Then $G_2 = \coprod_{n \in \N} B_n$ is strongly NIP, as for any $i \in \N_0$, we have $|\{n \in \N \mid p_iB_n \neq B_n \}|  \leq  1$,
			and for any $n \in \N$, we have
			$|\{p \text{ prime} \mid [B_n:pB_n]=\infty \}| \leq 1$. Note that $G_2$ is $2$-divisible. Moreover, any convex subgroup of $G_2$ is of the form $\coprod_{n \geq k} B_n = H_k$ for some $k \in \N$. Note that $H_k$ is non-divisible but $p_i$-divisible for any $i < k$.
		\end{enumerate}
	\end{example}

	Let $G_1$ and $G_2$ be { the strongly NIP ordered abelian groups defined in} \Autoref{ex:snipdensenotdense}. By \Autoref{lem:powstrongnip}, we obtain strongly NIP almost real closed fields $(\R\pow{G_1},<)$ and $(\R\pow{G_2},<)$. 
	
	\begin{example}
			{ Let $K_1=\R\pow{G_1}$. 
				By \Autoref{prop:lorthenlr}, $v_0=\vnat$ is $\Lr$-definable in $(K_1,<)$, as $G_1$ is dense in its divisible hull. This shows that $(K_1,<)$ is a strongly NIP ordered field which is not real closed but almost real closed with respect to an $\Lr$-definable (and hence $\Lor$-definable) henselian valuation.}
	\end{example}

	\begin{example}\thlabel{thm:negativeanswer}
		{ 
			Let $K_2=\R\pow{G_2}$.
			Since $G_2$ has no convex divisible subgroup, we have $v_0=\vnat$. Thus by \Autoref{fact:thm44}, this valuation is $\Lr$-definable in $K_2$ if and only if there is a prime $p$ such that $G_2$ has no non-trivial convex $p$-divisible subgroup. However, for any $i \in \N$, the non-trivial convex subgroup $H_{i+1}$ of $G_2$ is $p_i$-divisible, and $G_2$ itself is $2$-divisible. This implies that $\vnat$ is not $\Lr$-definable. Moreover, since $G_2$ is $2$-divisible, $\vnat$ is also not $\Lor$-definable by \Autoref{prop:charlrdef}. As $\vnat= v_0$ (see page~\pageref{def:v0}), it is the only valuation with respect to which $K_2$ is almost real closed.\emph{ Thus, $(K_2,<)$ is a { strongly} NIP almost real closed field which is not almost real closed with respect to an $\Lor$-definable henselian valuation.}}
	\end{example}


	

	\section{Open Questions}\label{sec:questions}
	
	We conclude with open questions connected to results throughout this work.
	
	In \Autoref{prop:subgroupdef} we have seen the following dichotomy for ordered abelian groups: Any ordered abelian group $G$ is (exclusively) either dense in $\div{G}$ or admits a proper non-trivial $\Log$-definable convex subgroup. For ordered fields, it is not known whether a similar dichotomy holds.
	Note that any archimedean ordered field $(K,<)$ does not admit a non-trivial valuation, as $\Z$ must be contained in any convex subring of $K$. Hence, any ordered field with an archimedean model does not admit a non-trivial $\Lor$-definable convex valuation. However, we have seen in \Autoref{prop:ordfieldnoarchmodel} that there are non-archimedean ordered fields which are dense in their real closure but do not have an archimedean model. In these ordered fields, it may be possible to find non-trivial $\Lor$-definable convex valuations. Note that these cannot be henselian by \Autoref{cor:nohensval}. In \cite{jahnke} it is not investigated whether the two cases in \Autoref{prop:defval} are exclusive. We pose this as the following question. 
	
	\begin{question}
		Is there an ordered field which is dense in its real closure and admits an non-trivial $\Lor$-definable convex valuation?
	\end{question}
	
	In \Autoref{lem:lemmaerdos} and \Autoref{prop:lemmaerdos} we have seen that any ordered field $K$ of infinite transcendence degree admits a dense transcendence basis $T$ and thus a dense subfield $F = \Q(T)$ such that $\Q \subseteq F$ is regular. Note that a non-archimedean ordered field $K$ with $\td(K) < \aleph_0$ cannot admit a transcendence basis dense in $K$. However, it is still possible that $K$ has a dense subfield $F$ such that $\Q\subseteq F$ is regular. We pose this as a question for a non-archimedean real closed field with transcendece degree $1$.
	
	\begin{question}\thlabel{qu:denseregular}
		Let $K = \rc{\Q(t)}$ ordered by $t > \N$. Is there a dense subfield $F \subseteq K$ such that $\Q \subseteq F$ is regular?
	\end{question}

	We have seen in \Autoref{prop:lorthenlr} that for any almost real closed field with respect to a henselian valuation $v$, if $v$ satisfies the hypothesis of \Autoref{thm:defval}, then it is not only $\Lor$- but already $\Lr$-definable. Although we have shown that \Autoref{thm:defval} generalises known $\Lr$-definability results of henselian valuations (see page~\pageref{page:hong}), we have not provided an example of an ordered field $(K,<)$ and a henselian valuation $v$ on $K$ which is $\Lor$- but not $\Lr$-definable. Since \Autoref{fact:thm44} provides a full characterisation of $\Lr$-definable henselian valuations in almost real closed fields, we will pose the following more specific question.
	
	\begin{question}
		Is there an almost real closed field $(K,<)$ and a henselian valuation $v$ on $K$ such that $v$ is $\Lor$- but not $\Lr$-definable?
	\end{question}


\begin{thebibliography}{99}
	\bibitem{alling}
	\textsc{N.~L.~Alling},
	\textsl{Foundations of Analysis over Surreal Number Fields},
	North-Holland Mathematics Studies 141 (North-Holland Publishing Co., Amsterdam, 1987).
	
	\bibitem{biljakovic} 
	\textsc{D.~Biljakovic}, \textsc{M.~Kochetov} and \textsc{S.~Kuhlmann},
	`Primes and irreducibles in truncation integer parts of real closed fields', 
	\textsl{Logic in Tehran}, Lect.\ Notes Log.\ 26 (eds A.~Enayat, I.~Kalantari and M.~Moniri; Assoc.\ Symbol.\ Logic, La Jolla, CA, 2006) 42--65.

	\bibitem{delon} 
	\textsc{F.~Delon} and \textsc{R.~Farré}, 
	`Some model theory for almost real closed fields', 
	\textsl{J.\ Symb.\ Log.} 61 (1996) 1121--1152, 
	doi:10.2307/2275808.

	\bibitem{conrad}{ 
	\textsc{P.~F.~Conrad}, `Regularly ordered groups', \textsl{Proc.\ Amer.\ Math.\ Soc.} 13 (1962), 726--731, doi:10.1090/S0002-9939-1962-0146272-9.}

	\bibitem{dupont} 
	\textsc{K.~Dupont},  \textsc{A.~Hasson} and \textsc{S.~Kuhlmann}, 
	`Definable valuations induced by multiplicative subgroups and NIP fields', 
	\textsl{Arch.\ Math.\ Logic} 58 (2019) 819--839, 
	doi:10.1007/s00153-019-00661-2.
	
	\bibitem{engler} 
	\textsc{A.~J.~Engler} and \textsc{A.~Prestel},  
	\textsl{Valued Fields}, 
	Springer Monogr. Math. (Springer, Berlin, 2005).
	
	\bibitem{erdos} 
	\textsc{P.~Erd\"os}, \textsc{L.~Gillman} and \textsc{M.~Henriksen}, 
	`An isomorphism theorem for real-closed fields', 
	\textsl{Ann. of Math. (2)} 61 (1955) 542--554, 
	doi:10.2307/1969812.
	
	\bibitem{farre} 
	\textsc{R.~Farré}, 
	`A transfer theorem for Henselian valued and ordered fields', 
	\textsl{J. Symb. Log.} 58 (1993) 915--930, 
	doi:10.2307/2275104.	

	\bibitem{fehm} 
	\textsc{A.~Fehm} and \textsc{F.~Jahnke}, 
	`Recent progress on definability of Henselian valuations', 
	\textsl{Ordered Algebraic Structures and Related Topics}, Contemp. Math. 697 (eds F.~Broglia, F.~Delon, M.~Dickmann, D.~Gondard-Cozette and V.~A.~Powers; Amer.
	Math. Soc., Providence, RI, 2017), 135--143,
	doi:10.1090/conm/697/14049.
	
	\bibitem{halevi2} 
	\textsc{Y.~Halevi} and \textsc{A.~Hasson}, 
	`Strongly dependent ordered abelian groups and Henselian fields', 
	\textsl{Israel J. Math.} 232 (2019), 719--758, doi:10.1007/s11856-019-1885-3.
	
	\bibitem{hauschild} 
	\textsc{K.~Hauschild}, 
	`Cauchyfolgen höheren Typus in angeordneten Körpern', 
	\textsl{Z. Math. Logik Grundlagen Math.} 13 (1967), 55--66, doi:/10.1002/malq.19670130303.
	
	\bibitem{hong} 
	\textsc{J.~Hong}, 
	`Definable non-divisible Henselian valuations', 
	\textsl{Bull. Lond. Math. Soc.} 46 (2014) 14--18, 
	doi:10.1112/blms/bdt074.			
	
	\bibitem{jahnke4} 
	\textsc{F.~Jahnke} and \textsc{J.~Koenigsmann}, 
	`Definable Henselian valuations', 
	\textsl{J. Symb. Log.} 80 (2015) 85--99, 
	doi:10.1017/jsl.2014.64.	

	\bibitem{jahnke5} 
	\textsc{F.~Jahnke} and \textsc{J.~Koenigsmann}, 
	`Defining coarsenings of valuations', 
	\textsl{Proc. Edinb. Math. Soc. (2)} 60 (2017) 665--687,
	doi:10.1017/S0013091516000341.	
	
	\bibitem{jahnke} 
	\textsc{F.~Jahnke}, \textsc{P.~Simon} and \textsc{E.~Walsberg}, 
	`Dp-minimal valued fields', 
	\textsl{J. Symb. Log.} 82 (2017) 151--165, 
	doi:10.1017/jsl.2016.15.
	
	{ 
	\bibitem{johnson}
	\textsc{W.~Johnson},
	`The canonical topology on dp-minimal fields',
	\textsl{J.\ Math.\ Log.} 18 (2018) 1850007, doi:10.1142/S021906131850007
	}

	{ 
	\bibitem{johnson2}
	\textsc{W.~Johnson},
	`Dp-finite fields VI: the dp-finite Shelah conjecture', Preprint, 2020, arXiv:2005.13989v1.
	}

	\bibitem{knebusch} 
	\textsc{M.~Knebusch} and \textsc{M.~J.~Wright}, 
	`Bewertungen mit reeller Henselisierung', 
	\textsl{J. Reine Angew. Math.} 286/287 (1976) 314--321, 
	doi:10.1515/crll.1976.286-287.314.
	
	\bibitem{krapp} 
	\textsc{L.~S.~Krapp}, \textsc{S.~Kuhlmann} and \textsc{G.~Lehéricy},
	`On Strongly NIP Ordered Fields and Definable Convex Valuations', Preprint, 2019,  
	arXiv:1810.10377v4. 
		
	\bibitem{krapp2} 
	\textsc{L.~S.~Krapp}, \textsc{S.~Kuhlmann} and \textsc{G.~Lehéricy},
	`Strongly NIP almost real closed fields', Preprint, 2019. 
	
	
	\bibitem{kuhlmann} 
	\textsc{S.~Kuhlmann}, 
	\textsl{Ordered Exponential Fields}, 
	Fields Inst. Monogr. 12 (Amer. Math. Soc., Providence, RI, 2000), 
	doi:10.1090/fim/012.
	
	\bibitem{mckenna} 
	\textsc{K.~McKenna},
	`New facts about Hilbert’s seventeenth problem', 
	\textsl{Model Theory and Algebra: A Memorial Tribute to Abraham Robinson}, Lecture Notes in Math. 498 (eds D.~H.~Saracino and V.~B.~Weispfennig; Springer, Berlin, 1975) 220--230.

	\bibitem{mourgues} 
	\textsc{M.~H.~Mourgues} and \textsc{J.~P.~Ressayre}, 
	`Every real closed field has an integer part', 
	\textsl{J. Symb. Log.} 58 (1993) 641--647, 
	doi:10.2307/2275224.
	
	\bibitem{prestel} 
	\textsc{A.~Prestel}, 
	`Definable Henselian valuation rings', 
	\textsl{J. Symb. Log.} 80 (2015) 1260--1267, 
	doi:10.1017/jsl.2014.52.	
	
	\bibitem{robinson}
	\textsc{A.~Robinson} and \textsc{E.~Zakon}, 
	`Elementary properties of ordered abelian groups', 
	\textsl{Trans. Amer. Math. Soc.} 96 (1960) 222--236, 
	doi:10.1090/S0002-9947-1960-0114855-0.
	
	\bibitem{robinson2}
	\textsc{J.~Robinson}, 
	`Definability and decision problems in arithmetic', 
	\textsl{J. Symb. Log.} 14 (1949) 98--114, 
	doi:10.2307/2266510.
	
	\bibitem{simon} 
	\textsc{P.~Simon}, 
	\textsl{A Guide to NIP Theories}, 
	Lect. Notes Log. 44 (Assoc. Symbol. Logic, Cambridge Univ. Press, Cambridge, 2015), 
	doi:10.1017/CBO9781107415133.
	
	{ 
	\bibitem{shelah}
	\textsc{S.~Shelah},
	`Strongly dependent theories',
	\textsl{Israel J.\ Math.} 204 (2014) 1--83, doi:10.1007/s11856-014-1111-2.
	}
	
	\bibitem{viswanathan} 
	\textsc{T.~M.~Viswanathan}, 
	`Ordered fields and sign-changing polynomials', 
	\textsl{J. Reine Angew. Math.} 296 (1977) 1--9, 
	doi:10.1515/crll.1977.296.1.
	
	\bibitem{zakon}
	\textsc{E.~Zakon}, 
	`Generalized archimedean groups', 
	\textsl{Trans. Amer. Math. Soc.} 99 (1961) 21--40, 
	doi:10.1090/S0002-9947-1961-0120294-X.
	
\end{thebibliography}
\end{document}